\title{Statistical properties for compositions of standard maps with increasing coefficient}
\author{Alex Blumenthal}
\date{\today}
\address{University of Maryland, College Park}
\curraddr{}
\email{alexb123@math.umd.edu}
\thanks{This research was supported by NSF Grant DMS-1604805.}
\subjclass[2010]{Primary: 37C60, 37A25, 37D25; Secondary: 60F05}
\keywords{nonautonomous dynamics, nonuniform hyperbolicity, statistical properties of deterministic dynamics}
\theoremstyle{theorem}
\newtheorem{thm}{Theorem}[section]
\newtheorem{thmA}{Theorem}
\newtheorem{lem}[thm]{Lemma}
\newtheorem{prop}[thm]{Proposition}
\theoremstyle{definition}
\newtheorem{defn}[thm]{Definition}
\newtheorem{rmk}[thm]{Remark}
\newtheorem{cla}[thm]{Claim}
\newtheorem{exam}[thm]{Example}
\newcommand{\E}{\mathbb{E}}
\newcommand{\N}{\mathbb{N}}
\renewcommand{\P}{\mathbb{P}}
\newcommand{\R}{\mathbb{R}}
\newcommand{\Z}{\mathbb{Z}}
\newcommand{\Bc}{\mathcal{B}}
\newcommand{\Fc}{\mathcal{F}}
\newcommand{\Hc}{\mathcal{H}}
\newcommand{\Gc}{\mathcal{G}}
\newcommand{\Sc}{\mathcal{S}}
\renewcommand{\Hc}{\mathcal{H}}
\renewcommand{\a}{\alpha}
\renewcommand{\b}{\beta}
\newcommand{\e}{\epsilon}
\newcommand{\pd}{\partial}
\newcommand{\graph}{\operatorname{graph}}
\newcommand{\Id}{\operatorname{Id}}
\newcommand{\T}{\mathbb T}
\newcommand{\Cc}{\mathcal C}
\newcommand{\Leb}{\operatorname{Leb}}
\newcommand{\Lip}{\operatorname{Lip}}
\renewcommand{\graph}{\operatorname{graph}}
\newcommand{\Len}{\operatorname{Len}}
\newcommand{\Pc}{\mathcal P}
\newcommand{\Bor}{\operatorname{Bor}}
\newcommand{\modone}{\, (\text{mod } 1)}
\begin{document}

\maketitle

\begin{abstract}
The Chirikov standard map family is a one-parameter family of volume-preserving maps exhibiting hyperbolicity on a `large' but noninvariant subset of phase space. Based on this predominant hyperbolicity and numerical experiments, it is anticipated that the standard map has positive metric entropy for many parameter values. However, rigorous analysis is notoriously difficult, and it remains an open question whether the standard map has positive metric entropy for any parameter value. Here we study a problem of intermediate difficulty: compositions of standard maps with increasing parameter. When the coefficients increase to infinity at a sufficiently fast polynomial rate, we obtain a Strong Law, Central Limit Theorem, and quantitative mixing estimate for Holder observables. The methods used are not specific to the standard map and apply to a class of compositions of `prototypical' 2D maps with hyperbolicity on `most' of phase space.
\end{abstract}

\section{Introduction and statement of results}

Let $f : M \to M$ be a smooth dynamical system. In many systems of interest, the dynamics of $f$ does not tend to a stable or periodic equilibrium, as evidenced, e.g., when observables $\phi : M \to \R$ of such systems fluctuate indefinitely, i.e., $\phi \circ f^n(x)$ fluctuates as $n \to \infty$ for a `large' set of $x \in M$. In such cases, the asymptotic dynamics of the system is best described not by equilibria, but by a `physical' measure $\mu$ for $f$: an $f$-invariant probability measure $\mu$ on $M$ is called \emph{physical} if for a positive Lebesgue measure set of $x \in M$ (the `basin' of $\mu$) and any observable $\phi : M \to \R$, we have that 
\begin{align}\label{eq:SRB}
\lim_{n \to \infty} \frac1n \sum_{i = 0}^{n-1} \phi \circ f^i (x) = \int \phi \, d \mu \, .
\end{align}

Treating the sequence of observations $\{\phi \circ f^i\}_{i \geq 0}$ as a sequence of random variables, \eqref{eq:SRB} above is a Strong Law of Large Numbers. Pursuing this interpretation, it is natural to ask whether finer statistical properties hold, e.g.: 
\begin{itemize}
\item Central Limit Theorems pertaining to the convergence in distribution of $\frac{1}{\sqrt n} \sum_{i = 0}^{n-1} (\phi \circ f^i(X) - m)$, where $X$ is distributed in $M$ with some given law $\nu$ and $m \in \R$ is a centering constant; and
\item Decay of Correlations, i.e., estimates on the decay of $|\int \phi \circ f^n \cdot \psi \, d \mu - \int \phi \, d \mu \int \psi \, d \mu|$ as $n \to \infty$ for some class of observables $\phi, \psi$ on $M$.
\end{itemize}

These properties are by now classical for maps $f$ with uniform hyperbolicity, e.g., expanding, Anosov or Axiom A maps (see, e.g., \cite{liverani1995decay}). Outside the `uniform' setting, an extremely important tool in the exploration of statistical properties of deterministic dynamical systems is nonuniformly hyperbolic theory, also known as Pesin theory \cite{barreira2007nonuniform, young1995ergodic}. Assuming some control on the (typically nonuniform) rate of hyperbolicity, techniques have been developed for use in conjunction with nonuniform hyperbolicity to probe finer statistical properties of deterministic dynamical systems (e.g., the technique of countable Markov extensions, also known as Young towers \cite{young1998statistical}).

\medskip

\noindent {\it Difficulties and challenges. }

Use of these tools requires establishing nonuniform hyperbolicity, which is notoriously difficult to verify even for maps which `appear' to be hyperbolic on most (but not all) of phase space. In the volume-preserving category, the difficulties involved are exemplified by the Chirikov standard map family $\{ F_L\}_{L > 0}$ of volume-preserving maps on the torus $\T^2$ \cite{chirikov1979universal}. For large $L$, the map $F_L$ exhibits strong hyperbolicity (i.e., $F_L$ admits a continuous, invariant family of cones with strong expansion) on a large but noninvariant subset of phase space.  A key difficulty is that typical orbits will enter a set where cone invariance is violated (e.g. the vicinity of an elliptic fixed point for $F_L$), and the previously expanding invariant cone is potentially `twisted' towards the strongly contracting direction, after which all the growth accumulated may be destroyed.

\medskip

\noindent {\it Results in this paper. } 

In the interest of studying a problem of intermediate difficulty between the classical uniformly hyperbolic settings and the presently intractable two-dimensional nonuniformly hyperbolic setting exemplified by the Standard Map, we propose to study compositions of standard maps with \emph{increasing} coefficient. 
Cone twisting does occurs on a positive-volume subset of phase space at each timestep, and so we contend with many of the same problems described above for systems away from the `uniform setting'. Indeed, our hypotheses do not preclude the existence of elliptic fixed points for our compositions. Important for our analysis, however, is the fact that increasing the coefficient at each timestep both increases the strength of expansion and decreases the size of phase space committing `cone twisting'-- a crucial feature of this model is that a generic trajectory reaches these `bad' regions at most finitely many times when the increasing coefficients $\{ L_n\}$ are inverse summable (see \S\ref{subsec:basicConstruct}).

Our main results pertain to the situation when the sequence of coefficients increases sufficiently rapidly: we are able to establish a strong law of large numbers, a central limit theorem and decay of correlations (Theorems \ref{thm:strongLaw}, \ref{thm:CLT}, and \ref{thm:decayCorrelations} respectively). Our methods are quite flexible, and only rely on the bulk geometry of hyperbolicity on successively larger-volume subsets of phase space. As such, our results apply to a class of volume-preserving maps which are qualitatively similar to the standard map family. For this reason, the techniques of this paper are able to handle effectively `nonautonomous' dynamics, i.e., dynamics whose behavior is allowed to change with time.

Along the way towards proving the main results, certain `finite-time' decay of correlations estimates are obtained for standard maps with \emph{fixed} coefficient $L$, i.e., correlations estimates providing sharp bounds at all times $n \leq N_L$ (in our results, $N_L$ grows as a fractional power of $L$). This result (formulated as Theorem \ref{thm:finiteDoC}) is of independent interest: although it fails to be true \emph{asymptotic} result, these estimates demonstrate that for large $L$, the Standard map $F_L$ is strongly mixing on a relatively long timescale.

\medskip

\noindent {\it Related prior work. } 

The study of nonautonomous dynamical systems is still in its infancy, and many open questions remain. That being said, the statistical properties explored in this paper are closest to those on memory loss for nonautonomous compositions of hyperbolic maps \cite{bakhtin1995random1, bakhtin1995random2, stenlund2011non} (see also \cite{arnoux2005anosov}); Sinai billiards systems with slowly moving scatterers \cite{chernov2009brownian, stenlund2014vector, stenlund2012dispersing}; and  polynomial loss of memory for intermittent-type maps of the interval with a neutral fixed-point at the origin \cite{aimino2014polynomial, nicol2015central}. We have benefited especially from the techniques in \cite{conze2007limit}, which studies statistical properties of sequential piecewise expanding compositions in one dimension.

\smallskip

Pertaining to the Chirikov standard map, there is a large literature on this and related systems (e.g., Schroedinger cocycles) which we do not include here. See, e.g., the citations in \cite{blumenthal2017lyapunov} for a small sampling of such results.

\smallskip

Random dynamical systems can be thought of as a version of nonautonomous dynamics with some stationarity properties; see, e.g., \cite{arnold2013random, kifer2012ergodic}. Lyapunov exponents of random perturbations of the standard map with large coupling coefficient were studied in \cite{blumenthal2017lyapunov}. We also note \cite{dolgopyat2004sample}, which established quenched (samplewise) statistical properties for a large class of SDE in both the volume-preserving and dissipative regimens.

The analysis in this paper bears some qualitative similarities with that used in \cite{blumenthal2017dissipative}, which studies Lyapunov exponents and statistical properties of random perturbations of dissipative two-dimensional maps with qualitatively similar features to the Henon map; these results apply as well to the standard map. As it turns out, statistical properties of the corresponding Markov chain can be deduced from \emph{finite-time mixing estimates} for the dynamics, very much in keeping with the spirit of the analysis in the present paper (especially Theorem \ref{thm:finiteDoC}).

\smallskip

Lastly, we mention that the techniques in this paper may be useful in future studies of `bouncing ball' models of Fermi acceleration \cite{deSimoi2009, de2013fermi, dolgopyat2008bouncing}. As it turns out, the static wall approximation of bouncing ball models in a potential field gives rise to a Poincare return map bearing strong qualitative similarities to the standard map (see \cite{deSimoi2009} for a detailed derivation), and so it is conceivable that the analysis in this paper may shed insight on open problems related to ``escaping trajectories'' for such models.

\bigskip

\noindent {\it Acknowlegements.} 

\noindent The author thanks Dmitry Dolgopyat for suggesting this problem and for many helpful discussions.

\subsection{Statement of results}

\subsubsection*{Definition of model}\label{subsubsec:modelDefinition}

\noindent Let $M_0 \in \N, K_0, K_1 > 0$ be fixed constants. Let $L_0 > 0$, which should be thought of as sufficiently large, and let $\{ L_n \}$ be a nondecreasing sequence for which $L_0 \leq L_1 \leq L_2 \leq \cdots \leq L_n \leq \cdots$ for all $n$. In our results, we will assume that $L_n \to \infty$ at a sufficiently fast polynomial rate in $n$.

\medskip

For each $n \geq 1$, let $f_n : \T^1 \to \R$ be a $C^3$ function for which

\begin{itemize}
\item[(H1)] $\| f_n' \|_{C^1} = \| f_n' \|_{C^0} + \| f_n''\|_{C^0} \leq K_0 L_n$, 
\item[(H2)] $\Cc_n := \{ \hat x \in \T^1 : f_n'(\hat x) = 0\}$ is finite, with cardinality $\leq M_0$ , and
\item[(H3)] For any $n \geq 1, x \in \T^1$ we have $|f_n'(x)| \geq L_n K_1^{-1} d(x, \Cc_n)$.
\end{itemize}

\medskip

We will consider the nonautonomous composition of the maps $F_n : \T^2 \to \T^2$ defined by setting
\[
F_n = \bigg( \begin{array}{c} f_{n} (x) - y \modone \\ x \end{array} \bigg) \, .
\]
Above, $\modone$ refers to the projection $\R \to \T^1$ defined by $x \mapsto x - \lfloor x \rfloor$\footnote{Here for $x \in \R$ we define the floor function $\lfloor x \rfloor = \max\{ n \in \mathbb Z : n \leq x \}$.}, having abused notation somewhat and parametrized $\T^1$ by $[0,1)$. We will continue to use this convention throughout the paper.

\smallskip

We note that conditions (H1) -- (H3) are satisfied by the family $f_n(x) := L_n \sin(2 \pi x) + 2 x$, in which case $F_n$ is (up to conjugation by a linear toral automorphism) the Standard map with coefficient $L_n$. These conditions are also satisfied for the family $f_n(x) := L_n \psi(x) + a_n$ where $\{ a_n\} \subset [0,1)$ is any subsequence and $\psi : \T^1 \to \R$ is a map satisfying some $C^3$-generic conditions-- details are left to the reader. The hypotheses (H1) -- (H3) are similar to those for Theorem 1 in \cite{blumenthal2017lyapunov}.

\smallskip

For $n \geq m \geq 1$, we write $F^n_m = F_n \circ F_{n-1} \circ \cdots \circ F_m$, and write $F^n = F^n_1$. We adopt the conventions $F^{n-1}_n = \Id$, $F^0 = \Id$.

\subsubsection*{Results}

Our first result is a Strong Law of Large Numbers, which can be thought of as an ergodicity-type property for the nonautonomous compositions $\{ F^n\}$.

\begin{thmA}\label{thm:strongLaw}
Let $\a \in (0,1]$. Assume that $\{L_n\}$ is nondecreasing, and that $L_1 \geq L_0$, where $L_0  = L_0(K_0, K_1, M_0, \a) > 0$ is a constant. Let $\phi : \T^2 \to \R$ be $\a$-Holder continuous with $\int \phi \, d \Leb_{\T^2} = 0$.
\begin{itemize}
\item[(a)] If $N^2 L_{N}^{- \frac{\a}{3 \a + 4} }   \to 0$, then $\frac{1}{N} \sum_{i = 0}^N \phi \circ F^i \to 0$ in $L^2$.
\item[(b)] If $N^{4 + \e} L_{N}^{- \frac{\a}{3 \a + 4} }  \to 0$ for some arbitrary $\e > 0$, then $\frac{1}{N } \sum_{i = 0}^N \phi \circ F^i \to 0$ Lebesgue almost-everywhere.
\end{itemize}
\end{thmA}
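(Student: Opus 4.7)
My plan is to establish the $L^2$ estimate
\[
\|a_N\|_{L^2}^2 \;\le\; C\, \|\phi\|_{C^\a}^2 \cdot N^2 L_N^{-\a/(3\a+4)} \;+\; O(N^{-1}),
\]
for $a_N := \frac{1}{N}\sum_{i=0}^{N-1} \phi \circ F^i$, then extract (a) directly from Chebyshev and (b) via Borel--Cantelli along a polynomial subsequence, with the gap between successive subsequence points filled by an elementary oscillation estimate.

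For the $L^2$ bound, begin by expanding
\[
\|a_N\|_{L^2}^2 = \tfrac{\|\phi\|_{L^2}^2}{N} + \tfrac{2}{N^2} \sum_{0 \le i < j < N} \int (\phi \circ F^i)(\phi \circ F^j) \, d\Leb.
\]
Each cross term can be rewritten as $\int \phi \cdot (\phi \circ F_{i+1}^j) \, d\Leb$ by changing variables under $F^i$ and using the $F_k$-invariance of Lebesgue; this puts it in precisely the form controlled by Theorem~\ref{thm:finiteDoC} (note that $\int\phi\,d\Leb=0$, so no mean-subtraction is needed). Invoke the theorem on each cross term, supplementing it with a trivial $L^\infty$-bound for pairs $(i,j)$ whose lag $j-i$ exceeds the useable time window of Theorem~\ref{thm:finiteDoC}. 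Using that $\{L_n\}$ is nondecreasing (so the ``worst'' mixing rate in each composition $F_{i+1}^j$ is dictated by the earliest coefficient $L_{i+1}$, bounded below by $L_1$ and above by $L_N$), summing the $\binom{N}{2}$ bounds yields the claimed $L^2$ estimate.

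Part (a) is immediate. For part (b), Chebyshev gives $\Leb\{|a_N|>\e_0\} \lesssim \|\phi\|_{C^\a}^2 \cdot N^2 L_N^{-\a/(3\a+4)}/\e_0^2$. Taking a polynomial subsequence $N_k := \lfloor k^q \rfloor$ with $q$ sufficiently large, the hypothesis $N^{4+\e} L_N^{-\a/(3\a+4)} \to 0$ ensures $\sum_k \Leb\{|a_{N_k}|>\e_0\}<\infty$, so Borel--Cantelli yields $a_{N_k} \to 0$ $\Leb$-a.e. Intermediate $N \in [N_k, N_{k+1}]$ are then controlled by the oscillation bound $|a_N - a_{N_k}| \le 2(N_{k+1}-N_k)\|\phi\|_\infty/N_k$, which tends to $0$ thanks to the subexponential spacing of $\{N_k\}$; intersecting over a countable sequence $\e_0 = 1/m \downarrow 0$ then gives $a_N \to 0$ $\Leb$-a.e.

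\textbf{Main obstacle.} The principal technical difficulty is the careful summation of the finite-time DoC bounds in the $L^2$ estimate. Theorem~\ref{thm:finiteDoC} is effective only on a time window scaling as a fractional power of $L_{i+1}$; pairs whose lag falls outside this window must be bounded trivially, and the combinatorics of such ``long-lag'' pairs must be shown not to overwhelm the target rate. The nondecreasing coefficients, which cause the useable window to grow along the orbit, are precisely what make the trade-off favorable when $\{L_n\}$ grows sufficiently fast; verifying that the bookkeeping yields exactly the exponent $\a/(3\a+4)$ inherited from Theorem~\ref{thm:finiteDoC} is the crux.
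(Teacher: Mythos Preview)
Your overall strategy—expand $\|a_N\|_{L^2}^2$, control the cross terms by finite-time correlation estimates, then invoke Borel--Cantelli—is the same as the paper's. But the displayed $L^2$ bound $\|a_N\|_{L^2}^2 \le C\|\phi\|_{C^\a}^2 N^2 L_N^{-\a/(3\a+4)} + O(N^{-1})$ is not correct, and the gap is exactly the point you flag as the ``main obstacle.''

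After the change of variables the cross term is $\int \phi\cdot(\phi\circ F_{i+1}^j)\,d\Leb$, a \emph{nonautonomous} composition; Theorem~\ref{thm:finiteDoC} is stated only for iterates of a single $F_L$, so what you actually need is Proposition~\ref{prop:tailDoC}(b). More importantly, either bound is controlled by the \emph{smallest} coefficient $L_{i+1}$ appearing in $F_{i+1}^j$, not by $L_N$. For $i$ near $0$ one has $L_{i+1}\approx L_1$, a fixed constant independent of $N$. If you use the Theorem~\ref{thm:finiteDoC}-type bound $(j-i)L_{i+1}^{-c}$ (with $c=\a/(3\a+4)$), then summing over $j$ with $i=0$ already gives $\sim N^2 L_1^{-c}$, hence $\sim L_1^{-c}$ after dividing by $N^2$: a fixed positive number, not $N^2 L_N^{-c}$. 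Your ``trivial bound for long lags'' does not rescue this: for $i=0$ the useable window is $\sim L_1^{c}$, a constant, so essentially all $\sim N$ lags are ``long'' and you are back to the trivial bound on $\sim N$ pairs. Repeating this for each small $i$ shows the early-index contributions do not fit under $N^2 L_N^{-c}$.

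The paper's remedy is a truncation you omit: write $\hat S_N = (\hat S_N - \hat S_{M,N}) + \hat S_{M,N}$ with $M=\lfloor\sqrt N\rfloor$, bound the first piece trivially by $M\|\phi\|_\infty = O(\sqrt N)$ (so it vanishes after dividing by $N$), and apply Proposition~\ref{prop:tailDoC}(b) only to cross terms in $\hat S_{M,N}$, where now every coefficient satisfies $L_{i+1}\ge L_M\to\infty$. This yields $N^{-2}\int \hat S_{M,N}^2 \lesssim N^{-1} + N\,L_M^{-c}$, and the hypothesis $N^2 L_N^{-c}\to 0$ applied at $\sqrt N$ gives $(\sqrt N)^2 L_{\sqrt N}^{-c}\to 0$, which is exactly $N L_M^{-c}\to 0$. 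If you carry out your long-lag/short-lag bookkeeping carefully you will find it forces a split at some $i^*\sim\sqrt N$ and effectively reproduces this truncation; but as written the argument does not close. Your subsequence-plus-oscillation route for (b) is a legitimate alternative to the paper's direct summability over all $N$, once the correct $L^2$ estimate is in place.
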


\begin{exam}\label{exam1}
Fix $\a \in (0,1], p > 0$. Define $L_n = \max\{ L_0, n^p\}$ for some $p > 1$. Then, Theorem \ref{thm:strongLaw}(a) holds when $p > \a^{-1}(6 \a + 8)$ and (b) holds when $p \geq \a^{-1} (12 \a + 16)$. The results are optimal when $\a = 1$ (i.e. $\phi$ is Lipschitz); here $p \geq 14$ suffices for (a) and $p \geq 32$ for (b).
\end{exam}

Next is a central limit theorem for Holder observables.

\begin{thmA}\label{thm:CLT}
Let $\a \in (0,1]$. Let $\{ L_n \}$ be as in Theorem \ref{thm:strongLaw}, and additionally, assume
\[
\lim_{N \to \infty} N^{8} L_{N}^{- \frac{\a}{3 \a + 4}  } = 0 \, .
\]
Let $\phi$ be an $\a$-Holder continuous function on $\T^2$ for which $\int \phi \, d \Leb_{\T^2} = 0$. Let $X$ be a uniformly distributed $\T^2$-valued random variable. Then,
$\frac{1}{\sigma \sqrt{N} } \sum_{i = 0}^N \phi \circ F^i(X)$
converges in distribution to a standard Gaussian as $N \to \infty$ with
\[
\sigma^2 = \int \phi(x,y)^2 \, dx dy + 2 \int \phi(x,z) \phi(z,y) \, dx dy dz \, ,
\]
provided that $\sigma > 0$. Moreover, we have $\sigma = 0$ iff $\phi(x,y) = \psi(x) - \psi(y)$ for some continuous $\psi : \T^1 \to \R$.
\end{thmA}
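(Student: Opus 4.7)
The plan is to combine Bernstein's big-block-little-block method with the finite-time decay-of-correlations estimate (Theorem~\ref{thm:finiteDoC}), exploiting that $X \sim \Leb_{\T^2}$ together with volume preservation of each $F_n$ implies $F^i(X) \sim \Leb_{\T^2}$ for every $i$; in particular $\E[\phi \circ F^i(X)] = 0$. Writing $F^i(x_0, y_0) = (x_i, x_{i-1})$ with $x_{-1} := y_0$, one has $\phi \circ F^i = \phi(x_i, x_{i-1})$, which makes the variance calculation transparent. First I would show that for $i = j$, Theorem~\ref{thm:finiteDoC} gives $\E[\phi(x_i, x_{i-1})^2] \to \int \phi^2\, dx\, dy$; for $|i-j| = 1$ it gives $\E[\phi(x_i, x_{i-1})\phi(x_{i+1}, x_i)] \to \int \phi(x,z)\phi(z,y)\, dx\, dy\, dz$, with the two symmetric contributions ($i<j$ and $j<i$) combining into the factor $2$; and for $|i-j| \geq 2$ the DoC decays fast enough, under the polynomial rate assumption on $\{L_n\}$, for these terms to sum to $o(N)$. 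This yields $N^{-1}\operatorname{Var}(S_N) \to \sigma^2$, where $S_N := \sum_{i=0}^N \phi \circ F^i$.

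Next I would partition $\{0, 1, \ldots, N\}$ into alternating large blocks $B_1, \ldots, B_K$ of length $p$ and small gaps $b_1, \ldots, b_K$ of length $q$, with $q \ll p$, $Kp \sim N$, $p/\sqrt{N} \to 0$, and $q \to \infty$ fast enough for decorrelation across gaps. Setting $Y_k = \sum_{i \in B_k} \phi \circ F^i$ and $Z_k = \sum_{i \in b_k} \phi \circ F^i$, the variance bound of the preceding paragraph shows $N^{-1/2}\sum_k Z_k \to 0$ in $L^2$ provided $q \ll \sqrt{N}$. The heart of the argument is to establish asymptotic independence of the $Y_k$: for any real $\xi_1, \ldots, \xi_K$,
\[
\E \exp\!\left( i N^{-1/2} \sum_{k=1}^K \xi_k Y_k \right) \;\approx\; \prod_{k=1}^K \E \exp\!\left(i N^{-1/2} \xi_k Y_k\right).
\]
Theorem~\ref{thm:finiteDoC}, applied across each gap at scale $q$ (where the ambient coefficient is at least $L_{kp + (k-1)q}$, hence increasingly large with $k$), supplies such decorrelation with a per-gap error; summing over $K \sim N/(p+q)$ gaps and optimizing $p, q$ against the polynomial growth of $L_n$ yields the hypothesis $N^8 L_N^{-\a/(3\a+4)} \to 0$. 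Given the approximate independence, the Lindeberg CLT applies to the array $\{Y_k\}$: $\operatorname{Var}(Y_k) \approx p\sigma^2$, $|Y_k| \leq p \|\phi\|_\infty$ deterministically, so $p/\sqrt{N} \to 0$ gives the Lindeberg condition.

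To characterize $\sigma = 0$, I would Fourier-expand $\phi(x,y) = \sum_{(m,n) \in \Z^2} \widehat\phi(m,n) e^{2\pi i (mx + ny)}$. A direct computation using $\int \phi = 0$ and reality of $\phi$ produces the clean identity
\[
\sigma^2 \;=\; \sum_{m,\, n \neq 0} |\widehat\phi(m,n)|^2 \;+\; \sum_{n \neq 0} |\widehat\phi(0, n) + \widehat\phi(n, 0)|^2,
\]
so $\sigma = 0$ forces $\widehat\phi(m, n) = 0$ whenever both $m, n \neq 0$, and $\widehat\phi(0, n) = -\widehat\phi(n, 0)$ for every $n \neq 0$. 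Defining $\psi(x) := \sum_{m \neq 0} \widehat\phi(m, 0) e^{2\pi i m x}$ then gives $\phi(x, y) = \psi(x) - \psi(y)$; continuity of $\psi$ follows from the identity $\psi(x) - \psi(0) = \phi(x, 0)$ and continuity of $\phi$. The converse is immediate: $\sum_i (\psi(x_i) - \psi(x_{i-1}))$ telescopes, so $S_N$ is bounded and $S_N/\sqrt{N} \to 0$.

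The main obstacle is the approximate-independence step. The sequential, nonstationary structure rules out any fixed spectral gap for a single transfer operator, so the decorrelation rate from Theorem~\ref{thm:finiteDoC} degrades in the earliest blocks where $L_n$ is still moderate. Controlling the accumulated error across all $K$ interblock gaps demands a delicate tradeoff between the block parameters $p, q$ and the polynomial growth rate of $L_n$, and it is precisely this balancing act which manifests as the $N^8$ factor in the hypothesis.
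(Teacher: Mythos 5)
Your proposal follows a genuinely different route from the paper. The paper constructs a \emph{forward martingale difference} approximation $\tilde S_{M,N} = \sum_n U_{M,N,n}$ adapted to filtrations $\hat\Gc_{M,k}$ built out of partitions of $\T^2$ into fully crossing horizontal curves (Proposition \ref{prop:martApprox}), applies McLeish's CLT for martingale arrays (Theorem \ref{thm:mcl}), and uses the nonautonomous finite-time correlation estimate Proposition \ref{prop:tailDoC} (rather than Theorem \ref{thm:finiteDoC}) only for the variance bookkeeping. Your Fourier-analytic characterization of $\sigma = 0$ is correct and is a reasonable alternative to the paper's Lemma \ref{lem:coboundaryCondition}, which instead uses the algebraic identity $\hat\sigma^2(\phi) = \int\bigl(\phi(x,y) + \int\phi(z,x)\,dz - \int\phi(w,y)\,dw\bigr)^2\,dx\,dy$.

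However, there is a genuine gap in the approximate-independence step, and it is not merely a matter of delicately tuning $p$, $q$ against the growth of $L_n$. To factorize the characteristic function across a gap you must apply a correlation estimate to the block observable $\Phi_k := \exp\bigl(i\xi_k N^{-1/2}\sum_{i\in B_k}\phi\circ F^i\bigr)$ tested against the product of the remaining blocks, both viewed as functions on $\T^2$. The correlation estimates at your disposal (Theorem \ref{thm:finiteDoC} / Proposition \ref{prop:tailDoC}) carry the $\a$-H\"older norms of both observables as multiplicative prefactors. But $[\phi\circ F^i]_\a \lesssim [\phi]_\a \cdot \operatorname{Lip}(F^i)^\a \sim [\phi]_\a\prod_{j\le i}(K_0 L_j)^\a$, so $\|\Phi_k\|_\a$ grows superpolynomially (in fact roughly like the product of the first $pk$ coefficients raised to the $\a$), which swamps any gain from the $L^{-\a/(3\a+4)}$ decay across a gap of length $q$. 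The estimate is rendered useless unless you replace the block observables by versions that are (nearly) constant along the unstable/horizontal fibers of the composition across the gap --- but performing that conditioning is precisely the content of the martingale construction in \S\ref{subsec:constructMart}, so the big-block route, carried out correctly, collapses into the paper's approach rather than bypassing it. Without this extra idea, the factorization step fails, and no choice of $p$ and $q$ rescues it.
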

\noindent These conditions are satisfied for $L_n$ as in Example \ref{exam1} when $p > 8  \a^{-1} (3 \a + 4)$. The asymptotic variance $\sigma$ appearing in Theorem \ref{thm:CLT} comes from an appropriate interpretation of the `singular' limit of the maps $F_n$ as $n \to \infty$. The condition $\phi(x,y) = \psi(x) - \psi(y)$ has the connotation of a coboundary condition for this singular limit. See the discussion in \S \ref{subsec:singularLimit} (in particular Remark \ref{rmk:singLimitVariance} and Lemma \ref{lem:coboundaryCondition}) for more details. Theorem \ref{thm:CLT} is proved in \S \ref{sec:CLT}. In the setting of Example \ref{exam1}, Theorem \ref{thm:CLT} holds when $p > \frac{8 (3 \a + 4)}{\a}$; the result is optimal when $\a = 1$, in which case $p > 56$ suffices.

\medskip

Finally, we present a decay of correlations estimate for the compositions $\{ F^n \}$.

\begin{thmA}\label{thm:decayCorrelations}
Fix $\eta \in (1/2, 1)$. Let $\{ L_n \}$ be a nondecreasing sequence for which $L_1 \geq L_0'$, where $L_0' = L_0'(K_0, K_1, M_0, \eta)$ is a constant, and assume that $\sum_n L_n^{-\frac12(1 -\eta)} < \infty$ for some fixed $\eta \in (1/2, 1)$. Then, there is a constant $C = C(K_0, K_1, M_0, \eta)$ for which the following holds.

Let $\a \in (0, 1]$ and let $\varphi, \psi$ be $\a$-Holder continuous functions on $\T^2$. Then,
\[
\bigg| \int \psi \circ F^n \cdot \varphi  - \int \varphi \int \psi  \bigg| \leq C \| \psi \|_\a \| \varphi \|_\a  \max \bigg\{ L_{\lfloor n/2 \rfloor}^{1 - 2 \eta} , \bigg( \sum_{i = \lfloor n/8 \rfloor}^\infty L_i^{- \frac12 (1 - \eta)} \bigg)^{\frac{\a}{\a + 2}} \bigg\}
\]
for all $n \geq 0$.
\end{thmA}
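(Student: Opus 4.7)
The plan is to combine a nonautonomous growth-of-curves argument for the first $\lfloor n/2 \rfloor$ iterates with a H\"older approximation at a carefully chosen scale $\delta$ optimized against a transverse mixing estimate for the remaining iterates. Throughout I would work with an invariant unstable cone family --- whose existence and properties should follow from hypotheses (H1)--(H3) in the earlier part of the paper --- and exploit that the ``bad'' regions, where cone invariance or sharp expansion fails at step $n$, have $\Leb$-measure polynomially small in $L_n$.

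\textbf{Step 1 (foliation by admissible curves).} I would foliate $\T^2$ by a family $\mathcal{W}^u$ of short nearly-horizontal admissible curves $\{\gamma\}$ with slopes in the unstable cone, and write $\Leb = \int \ell_\gamma \, d\tau(\gamma)$, where $\ell_\gamma$ is arclength on $\gamma$ and $\tau$ is the transverse measure. Fubini then gives
\[
\int \psi \circ F^n \cdot \varphi \, d\Leb - \int \varphi \int \psi = \int \Bigl( \int \psi \circ F^n \cdot \varphi \, d\ell_\gamma - \Bigl(\int \varphi \, d\ell_\gamma\Bigr) \int \psi \Bigr) d\tau(\gamma) \, ,
\]
so it suffices to bound the integrand uniformly in $\gamma$, at which point the $\a$-H\"older norm of $\varphi$ may be absorbed by standard arguments.

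\textbf{Step 2 (growth of admissible curves to time $\lfloor n/2 \rfloor$).} I would iteratively push $\gamma$ forward under $F_1, \ldots, F_{\lfloor n/2 \rfloor}$, subdividing at each step into admissible sub-pieces and discarding those that come within distance $\sim L_{k+1}^{-\eta}$ of the critical set $\Cc_{k+1}$ of $f_{k+1}$, i.e.\ the locus of cone twisting and weak expansion by (H3). The $\ell_\gamma$-mass discarded at step $k$ is of order $L_k^{-\eta}$ per unit length of the current curves; iterating and using monotonicity of $\{L_n\}$ together with the expansion accumulated by the surviving pieces, the total discarded $\Leb$-mass can be bounded by $L_{\lfloor n/2 \rfloor}^{\,1-2\eta}$, which is the first term in the statement. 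On the complement, one is left with a collection of admissible sub-pieces of $F^{\lfloor n/2 \rfloor}\gamma$, each of length $\gtrsim 1$ and with uniformly bounded distortion.

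\textbf{Step 3 (H\"older approximation and transverse mixing for the second half).} Fix a scale $\delta > 0$ to be optimized, and approximate $\psi$ by a function $\tilde\psi_\delta$ that is constant on each box of a $\delta$-box partition of $\T^2$, at the cost $\|\psi - \tilde\psi_\delta\|_\infty \le \|\psi\|_\a \delta^\a$. For each long admissible piece $\omega \subset F^{\lfloor n/2 \rfloor}\gamma$ produced in Step 2, a second growth-of-curves analysis under $F_{\lfloor n/2\rfloor + 1}, \ldots, F_n$ compares the arclength fraction $\ell_\omega\bigl((F^n_{\lfloor n/2\rfloor+1})^{-1}(B) \cap \omega \bigr) / |\omega|$ to $\delta^2 = \Leb(B)$ for each box $B$, with a per-box discrepancy bounded by $\sum_{i \ge \lfloor n/8 \rfloor}^{\infty} L_i^{-(1-\eta)/2}$ (the index $\lfloor n/8 \rfloor$ accommodating further subdivisions needed in this second phase, and the summability assumption ensuring the tail is finite). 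Summing against $\tilde\psi_\delta$ over the $\lesssim \delta^{-2}$ boxes gives
\[
\Bigl| |\omega|^{-1} \int \tilde\psi_\delta \circ F^n_{\lfloor n/2\rfloor+1} \, d\ell_\omega - \int \tilde\psi_\delta \, d\Leb \Bigr| \lesssim \|\psi\|_\infty \, \delta^{-2} \sum_{i \ge \lfloor n/8 \rfloor}^\infty L_i^{-(1-\eta)/2} \, .
\]
Balancing this against the H\"older approximation cost $\|\psi\|_\a \delta^\a$ and choosing $\delta \sim \bigl(\sum_{i \ge \lfloor n/8\rfloor}^\infty L_i^{-(1-\eta)/2}\bigr)^{1/(\a+2)}$ produces the second term in the stated bound.

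\textbf{Main obstacle.} The heart of the argument is the per-box mixing estimate in Step 3: quantitative equidistribution of arclength on a long admissible curve against $\Leb$, in a nonautonomous setting where no single transfer operator or spectral gap is available. This must be obtained through a recursive coupling- or standard-pair-type argument that simultaneously handles distortion along the long admissible pieces, further subdivisions needed as new admissible pieces are born, and the cone twisting caused by the shrinking critical neighborhoods. The summability condition $\sum L_i^{-(1-\eta)/2} < \infty$ is precisely what is needed to close this iteration and ensure the per-box discrepancy remains controlled uniformly in $n$.
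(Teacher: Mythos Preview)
Your overall plan---curve growth in the first half, H\"older approximation and mixing in the second, then balancing the scale---matches the paper's architecture, but the bookkeeping of errors is off in a way that breaks Step 2. You claim the mass discarded when pushing horizontal curves forward through the first $\lfloor n/2\rfloor$ steps is bounded by $L_{\lfloor n/2\rfloor}^{\,1-2\eta}$. It is not. Discarding at each step $k$ the piece of curve within the critical strip (whose correct width is $\sim L_k^{-1+\eta}$, not your $L_k^{-\eta}$; the latter is too small to ensure expansion $\geq L_k^\eta$ via (H3)) costs total mass $\sum_{k=1}^{\lfloor n/2\rfloor} O(L_k^{-1+\eta})$. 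This sum includes the fixed early terms $k=1,2,\ldots$ and therefore does not decay as $n\to\infty$; your Step 2 as written yields a correlation bound that never goes to zero. The term $L_{\lfloor n/2\rfloor}^{\,1-2\eta}$ in the statement has nothing to do with discarded mass: it is the \emph{distortion} error along long horizontal curves, entering through the curve--mixing estimate for the second half (the paper's Proposition~\ref{prop:CLTdocCurves}, with the distortion bound of Lemma~\ref{lem:distControlHorizontalCurves}).

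The paper avoids the ``early discarded mass'' obstruction by reversing the roles you assign. It approximates $\varphi$ (not $\psi$) at scale $\ell$ by indicators of squares $R$, and runs the first-half curve-growth analysis not on all of $\T^2$ but on each small square $R$ separately (Section~\ref{sec:shapeSet}). The key quantity is a stopping time $\sigma_R(p)$---the first time after the last visit to a fattened bad set at which the horizontal curve through $F^{k-1}p$ inside $F^{k-1}R$ has macroscopic length---and the main work (Proposition~\ref{prop:sigmaTail}) is the tail bound $\nu_R\{\sigma_R > n\} \lesssim \ell^{-2}\sum_{i\geq \lfloor n/4\rfloor} L_i^{-(1-\eta)/2}$. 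Both the index $\lfloor n/8\rfloor$ and the exponent $-(1-\eta)/2$ in the theorem come from this first-half tail estimate (after the substitution $n\mapsto 2n$), not from the second-half per-box analysis as you suggest. The $\ell^{-2}$ is then balanced against the H\"older cost $\ell^\alpha$ of approximating $\varphi$, yielding the second term in the max; the curve--mixing estimate applied to the surviving long curves in the second half is what produces $L_{\lfloor n/2\rfloor}^{\,1-2\eta}$. Your Step 3 per-box argument would have to be replaced by, or reduced to, that curve--mixing proposition.
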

\noindent Above, all integrals $\int$ are with respect to $\Leb_{\T^2}$, and we have written
\[
[\varphi]_\a := \sup_{p, q \in \T^2} \frac{|\varphi(p) - \varphi(q)|}{d_{\T^2}(p,q)^\a} \, \quad \text{ and } \quad
\| \varphi \|_\a = \| \varphi \|_{C^0} + [\varphi]_\a \, 
\]
for $\a \in (0,1]$ are Holder moduli and norms, respectively, and $d_{\T^2}$ is the geodesic distance on $\T^2$ endowed with the flat geometry of $\R^2 / \Z^2$.

\begin{exam}\label{exam1}
Fix $\eta \in (1/2, 1)$. Define $L_n = \max\{ L_0',  n^p\}$ for some $p > 4$. In particular, $\sum_n L_n^{-\frac12(1 - \eta)} < \infty$ iff $p > 2/(1 - \eta)$. One obtains that the $\max \{ \cdots \} $ term on the right-hand side is
\[
\leq Const. \, \| \psi \|_\a \| \phi \|_\a n^{\max \{ p (1 - 2 \eta), \frac{\a}{\a + 2} (1 - \frac12 p (1 - \eta))\} }
\]
The exponent of $n$ is optimized at $\eta = \frac{3 \a p + 4 p - 2 \a}{5 \a p + 8 p}$ at the value $\frac{(4 - p) \a}{5 \a + 8}$ (valid since here $p > 2 / (1 - \eta)$ reduces to $p > 4$, which has been assumed), leading to the estimate
\[
\leq Const. \, \| \psi \|_\a \| \phi \|_\a n^{- \frac{\a(p - 4)}{5 \a + 8} } 
\]
The result is strongest when $\a = 1$, in which case decay of correlations is summable if $p > 17$.
\end{exam}

\bigskip

\noindent{\it Finite-time decay of correlations estimates for fixed-coefficient standard maps.}
\smallskip

\noindent Our estimates in this paper can also be used to obtain the following \emph{finite time} decay of correlations estimate for Holder observables.

\begin{thmA}\label{thm:finiteDoC}
Let $\a \in (0,1]$, and let $L \geq L_0''$, where $L_0'' = L_0''(\a) > 0$ is a constant. Let $\phi, \psi$ be $\a$-Holder-continuous functions on $\T^2$. Then,
\[
\bigg| \int \phi \circ F_L^n \cdot \psi - \int \phi \int \psi \bigg| \leq C \| \phi\|_\a \| \psi \|_\a \cdot n L^{- \frac{\a}{3 \a + 4}} \,. 
\]
for all $n \geq 2$, where $C = C(\a) > 0$ is a constant independent of $L,\psi, \phi$.
\end{thmA}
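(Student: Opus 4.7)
The plan is to adapt the decomposition-and-iteration machinery that underlies Theorem~\ref{thm:decayCorrelations} to the autonomous setting $L_n \equiv L$, exploiting the fact that for fixed large $L$ the same method gives nontrivial bounds on any time interval shorter than the ``cone-twisting horizon'' $L^{\a/(3\a+4)}$. Concretely, I would first disintegrate $\psi \cdot \Leb_{\T^2}$ into standard pairs supported on short admissible curves tangent to the expanding horizontal cone of $F_L$ determined by (H1)--(H3), with weights controlled by $\|\psi\|_\a$. This reduces the correlation estimate to an averaging-along-curves problem: bound $\int_\gamma \phi \circ F_L^n$ for $\phi$ integrated against long near-horizontal curves $\gamma$, averaged over the family of standard pairs.

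Next, I would iterate the admissible curves under $F_L$. By (H3), the cone-twisting set $\{x : |f_L'(x)| < \delta\}$ has one-dimensional Lebesgue measure at most $2 M_0 K_1 \delta / L$; curves (or pieces thereof) that intersect this set at some step $k \leq n$ lose admissibility and are removed from the ``good'' component, while surviving curves stretch by a factor $\sim L \delta$ in the admissible direction. Summing per-step ejection losses over $n$ iterates (rather than compounding them) is what produces the linear-in-$n$ structure of the claimed bound.

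On the surviving good component, admissible curves wrap $\T^2$ many times after only a few iterations, and I would integrate a Holder observable along such a near-horizontal curve by the following Holder-smoothing balance: mollifying $\phi$ at scale $\e$ introduces an approximation error of order $[\phi]_\a \e^\a$, while for $C^k$-smooth observables one obtains a one-step oscillatory-integral decay exploiting (H3) together with a van-der-Corput / non-stationary phase estimate applied to $f_L$. Optimizing $\e$ against the smooth decay and the per-step ejection loss should yield exactly the exponent $\a/(3\a+4)$, along with the threshold $n \leq L^{\a/(3\a+4)}$ beyond which the claimed bound becomes trivial. For $n > L^{\a/(3\a+4)}$, one has $n L^{-\a/(3\a+4)} \geq 1$, and the trivial $L^\infty$ bound $|\int \phi \circ F_L^n \cdot \psi - \int \phi \int \psi| \leq 2\|\phi\|_\infty \|\psi\|_\infty$ dominates $n L^{-\a/(3\a+4)} \|\phi\|_\a \|\psi\|_\a$.

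The main obstacle I anticipate is extracting the sharp one-step smooth-observable mixing estimate with uniform constants in $L$. Although $F_L$ is strongly expanding along the horizontal cone, by (H1) the second derivative $|f_L''|$ can itself be as large as $K_0 L$; this curvature is simultaneously the source of the expansion and a potential obstruction to cleanly applying stationary-phase arguments, since the phase functions that arise are not a priori well-controlled as $L \to \infty$. Balancing these features via the nondegeneracy in (H3) to obtain the specific exponent $\a/(3\a+4)$---and doing so uniformly in $L$---is the technically most delicate step, and is what sets the scale of the finite time horizon on which the estimate is useful.
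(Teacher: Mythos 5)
Your proposal correctly identifies the skeleton of the argument---disintegrate $\psi\,d\Leb$ into measures on near-horizontal curves, iterate under $F_L$, count per-step ``ejection'' losses linearly in $n$, then estimate how a long surviving image curve averages a Holder observable---and this is indeed the structure of the paper's proof. The paper deduces Theorem~\ref{thm:finiteDoC} by observing that the proof of Proposition~\ref{prop:tailDoC} only uses that $\{L_n\}$ is nondecreasing, hence applies with $L_m = \cdots = L_n = L$, after which one sets $\eta = \frac{2\a+2}{3\a+4}$: the per-step excision loss is $O(L^{-1+\eta}) = O(L^{-(\a+2)/(3\a+4)})$, summed over $n$ steps, and the one-shot curve-equidistribution error is $O(L^{-\min\{2\eta-1,\,\a(1-\eta)/(2+\a)\}}) = O(L^{-\a/(3\a+4)})$, yielding the $n L^{-\a/(3\a+4)}$ bound. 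Your linear-in-$n$ accounting of the bad-set losses is exactly the right mechanism.

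Where you diverge, and where the gap lies, is in the central technical step: how to integrate a Holder observable against a long near-horizontal image curve. The paper's Proposition~\ref{prop:CLTdocCurves} uses no oscillatory-integral or van~der~Corput estimate at all. It partitions $\T^2$ into $K^2$ squares of side $\ell = 1/K$, approximates the observable by a step function (cost $\ell^\a\|\psi\|_\a$), and then via Lemma~\ref{lem:multMixingPrep} (excision of pieces meeting $B_n$ plus the distortion estimate of Lemma~\ref{lem:distControlHorizontalCurves}) shows that the surviving image pieces distribute $(1 + O(L^{1-2\eta} + \ell^{-2}L^{-1+\eta}))$-evenly across the squares. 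The exponent $\a/(3\a+4)$ then emerges from optimizing $\ell$ and $\eta$ against the triple $(\ell^\a,\, L^{1-2\eta},\, \ell^{-2}L^{-1+\eta})$; hypothesis (H3) enters only through the bad-set measure and the cone/expansion bounds, never as the nondegeneracy input to a stationary-phase argument. In contrast, your route relies on a one-step mixing estimate for mollified observables obtained via a van~der~Corput estimate with uniform-in-$L$ constants, a step you yourself flag as unresolved. Since $|f_L''|$ can be as large as $K_0 L$ under (H1), it is far from clear that such an estimate holds with the needed uniformity or that balancing it against the mollification error $[\phi]_\a\e^\a$ would reproduce the exponent $\a/(3\a+4)$. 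This unresolved step is the genuine gap, and it is precisely the difficulty that the paper's more elementary discretization-plus-distortion argument is designed to avoid.
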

\noindent For each fixed $L > 0$, Theorem \ref{thm:finiteDoC} provides a nontrivial upper bound on correlations for times $n \ll L^{\frac{\a}{3 \a + 4}}$, and thus gives information on the mixing properties of the standard map in the so-called anti-integrable limit. Like before, the result is strongest at $\a = 1$.


\subsubsection*{Plan for the paper}

We collection preliminaries and basic hyperoblicity results in \S \ref{sec:preliminaries}, with an emphasis on the geometry of iterates of curves roughly parallel to the strongly expanding direction (called \emph{horizontal curves}) for the dynamics. 

In \S \ref{sec:singLimit} we develop finite-time mixing estimates for the composition $\{ F^n\}$; this verifies Theorem \ref{thm:finiteDoC} and also lets us provide a statistical description of the `singular' limit of the maps $F_n$ as $n \to \infty$. In \S \ref{sec:SL} we deduce the Strong Law (Theorem \ref{thm:strongLaw}) and in \S \ref{sec:CLT} we prove the Central Limit Theorem (Theorem \ref{thm:CLT}). 

The proof of Theorem \ref{thm:decayCorrelations}, carried out in \S \ref{sec:shapeSet} and \S \ref{sec:decayCorrelations}, is logically independent of \S \ref{sec:singLimit} -- \S \ref{sec:CLT}; indeed, it should not be surprising that the `finite-time mixing' estimates in these sections do not yield the long-time asymptotic correlation estimate in Theorem \ref{thm:decayCorrelations}. The proof of the latter requires a more careful study of the `shape' of iterates of small, sufficiently nice sets $S \subset \T^2$. This is carried out in \S \ref{sec:shapeSet}, and the proof of Theorem \ref{thm:decayCorrelations} is completed in \S \ref{sec:decayCorrelations}.

%


\subsubsection*{Notation and conventions}

\noindent We $\T^1$ parametrize as $[0,1)$ throughout the paper. The torus $\T^2$ carries the flat geometry of $\R^2 / \Z^2$, and we identify all tangent spaces with the same copy of $\R^2$. We write $d_{\T^1}, d_{\T^2}$ for the geodesic metrics on $\T^1, \T^2$ respectively.

We repeatedly use big-O notation: a quantity $\beta \in \R$ is said to be $O(\kappa)$ for some $\kappa > 0$, written $\beta = O(\kappa)$, if there is a constant $C > 0$, depending only on the \emph{system parameters} $K_0, K_1, M_0$, for which $|\a| \leq C \kappa$. Similarly, the letter $C$ is reserved for any positive constant depending only on the parameters $K_0, K_1, M_0$.

We write $\Leb$ or $\Leb_{\T^2}$ for the Lebesgue measure on $\T^2$, although unless otherwise stated, any integral $\int$ over $\T^2$ should be assumed to be with respect to Lebesgue. When $\gamma \subset \T^2$ is a $C^2$ curve, we write $\Leb_\gamma$ for the (unnormalized) induced Lebesgue measure on $\gamma$.

Lastly: the parameter $L_0 > 1$ is assumed fixed, and will be taken sufficiently large in a finite number of places in the proofs to come. Whenever $L_0$ is enlarged, it is done so in a way that depends only on the system parameters $K_0, K_1, M_0$ and the auxiliary parameter $\eta$ introduced below in \S \ref{subsubsec:basicConstruct}.

\bigskip

{\it From this point forward, we will assume that $\{L_n\}, \{f_n\}$ are as in (H1) -- (H3), and that $\{ L_n\}$ is a nondecreasing sequence.}

\section{Predominant hyperbolicity}\label{sec:preliminaries}

For all large $n$, the maps $F_n$ are \emph{predominantly hyperbolic}, which is to say that the derivative maps $d F_n$ exhibit strong expansion along roughly horizontal directions on an increasingly large (but non-invariant) proportion of phase space. Our purpose in this section is to make this idea precise and collect some preliminary results.

In \S \ref{subsec:basicConstruct} we essentially deal with hyperbolicity on the linear level: when $L_n \to \infty$ sufficiently fast, we show that the compositions $\{ F^n\}$ possess nonzero (in fact, infinite) Lyapunov exponents at Lebesgue-almost every point. On the other hand, the \emph{rate} at which this hyperbolicity is expressed is nonuniform across phase space, and so in analogy with standard nonuniformly hyperbolic theory in the stationary setting, we develop in \S \ref{subsec:basicConstruct} notion of \emph{uniformity set} to control this nonuniformity.

In \S \ref{subsec:horizontalCurves} and \ref{subsec:decayCorrelationsCurves}, we consider the nonlinear picture: the time evolution of curves roughly parallel to the unstable (horizontal) direction. The basic idea is that sufficiently long `horizontal curves' proliferate rapidly through phase space: this is precisely the mixing mechanism one anticipates when working with this model, and is used repeatedly throughout the paper. Standard hyperbolic theory preliminaries are given in \S \ref{subsec:horizontalCurves}, while in \S \ref{subsec:decayCorrelationsCurves} this mixing mechanism is more precisely laid out in the form of a mixing estimate for Lebesgue measure supported on a sufficiently long horizontal curve.

\subsection{Predominant hyperbolicity of maps $F_n$}\label{subsec:basicConstruct}

Let us begin by identifying subsets of phase space where the maps $F_n$ exhibit uniformly strong hyperbolicity. For $L > 0$ and $n \geq 1$, define the \emph{critical strips}
\[
S_{n, L} =  \{ (x,y) \in \T^2 : d(x, \Cc_n) \leq  K_1 L_n^{-1} L \} \, ,
\]
and note that by (H3), for $(x,y) \notin S_{n, L}$ we have $|f_n'(x)| \geq L$. For each $n$, outside $S_{n, L}$ we have that $F_n$ is strongly expanding in the horizontal direction: to wit, for any $L$ sufficiently large ($L \geq 10$ will do for our purposes) and any $n \geq 1, p \notin S_{n, L}$, the cone $C_h = \{ v = (v_x, v_y) \in \R^2 : |v_y| \leq \frac{1}{10} |v_x| \}$ is preserved by $(dF_n)_p$, and all vectors in the cone are expanded by a factor $\geq L/4$.

In particular, observe that $\Leb(S_{n, L}) \approx L / L_n$. Thus, for fixed $L$, the proportion of phase space $\T^2 \setminus S_{n, L}$ on which $F_n$ preserves and expands $C_h$ increases as $n$ increases. When the sequence $L_n$ increases sufficiently rapidly, this implies an infinite Lyapunov exponent almost everywhere:

\begin{lem}\label{lem:infiniteExponent}
Assume $\sum_{n = 1}^\infty L_n^{-1} < \infty$. Then, 
\begin{align}\label{eq:infiniteExponent}
\lim_{n \to \infty} \frac1n \log \| dF^n_p \| = \infty
\end{align}
for $\Leb$-almost every $p \in \T^2$.
\end{lem}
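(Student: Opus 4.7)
My plan is to combine the volume-preservation of each $F_n$ with a Borel--Cantelli argument applied to the critical strips $S_{n,L}$. The linear cone-preservation and expansion statement recalled just before the lemma will then yield, for $\Leb$-a.e.\ $p$, a lower bound on $\|dF^n_p\|$ growing geometrically at rate $\geq \log(L/4)$, and the conclusion will follow by sending $L \to \infty$ along a countable sequence.

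\emph{Step 1 (volume preservation and Borel--Cantelli).} A direct computation shows that each $F_n$ has constant Jacobian determinant $1$, so every composition $F^{n-1}$ preserves $\Leb_{\T^2}$. Fix $L > 0$ large. The set
\[
B_n(L) \;:=\; (F^{n-1})^{-1}(S_{n,L})
\]
therefore has $\Leb(B_n(L)) = \Leb(S_{n,L}) \leq C L / L_n$, using the volume bound $\Leb(S_{n,L}) \approx L/L_n$ recorded in the text. The standing hypothesis $\sum_n L_n^{-1} < \infty$ yields $\sum_n \Leb(B_n(L)) < \infty$, so by Borel--Cantelli there is a full-measure set $\Omega_L \subset \T^2$ on which $F^{n-1}(p) \in S_{n,L}$ for only finitely many $n$; equivalently, there is an integer $N = N(p,L) < \infty$ with $F^{n-1}(p) \notin S_{n,L}$ for every $n \geq N$.

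\emph{Step 2 (bootstrap from cone expansion to $\|dF^n_p\|$).} For $p \in \Omega_L$ and $n \geq N$, the cone-invariance statement recalled in \S\ref{subsec:basicConstruct} implies that $dF^{n-N+1}_{F^{N-1}(p)}$ preserves $C_h$ and expands every vector in $C_h$ by a factor $\geq (L/4)^{n-N+1}$. Volume preservation makes $dF^{N-1}_p$ invertible, so we may set $w := (dF^{N-1}_p)^{-1}(1,0)$, noting that $dF^{N-1}_p(w) = (1,0) \in C_h$. Then
\[
\|dF^n_p\| \;\geq\; \frac{\|dF^n_p\, w\|}{\|w\|} \;=\; \frac{\|dF^{n-N+1}_{F^{N-1}(p)}(1,0)\|}{\|(dF^{N-1}_p)^{-1}(1,0)\|} \;\geq\; \frac{(L/4)^{n-N+1}}{\|(dF^{N-1}_p)^{-1}(1,0)\|}.
\]
Taking logarithms, dividing by $n$, and observing that the denominator and $N$ depend only on $p$ and $L$ (not on $n$), we conclude
\[
\liminf_{n \to \infty} \frac{1}{n}\log\|dF^n_p\| \;\geq\; \log(L/4) \qquad \text{for all } p \in \Omega_L.
\]

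\emph{Step 3 (conclusion).} Fix any sequence $L_k \to \infty$ and put $\Omega := \bigcap_k \Omega_{L_k}$, a full-measure set. On $\Omega$ the above lower bound holds with $L = L_k$ for every $k$, giving $\liminf_n \frac{1}{n}\log\|dF^n_p\| = +\infty$, which is \eqref{eq:infiniteExponent}.

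\emph{Main obstacle.} Conceptually the argument is elementary; the only subtle point is verifying that the ``initial transient'' factor $\|(dF^{N-1}_p)^{-1}(1,0)\|$ does not spoil the estimate. This is automatic here because once $p$ and $L$ are fixed, $N = N(p,L)$ is finite and $(dF^{N-1}_p)^{-1}$ is a single fixed invertible linear map, so the contribution $-\frac{1}{n}\log\|(dF^{N-1}_p)^{-1}(1,0)\|$ vanishes in the limit. The argument is genuinely linear and requires no control on the nonlinear geometry of orbits.
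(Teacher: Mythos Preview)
Your proof is correct and follows essentially the same approach as the paper: Borel--Cantelli on the preimages of the critical strips, followed by cone expansion once the orbit has escaped the strips, then letting $L \to \infty$ along a countable sequence. One minor notational slip: in Step~2 you write $dF^{n-N+1}_{F^{N-1}(p)}$, but in the paper's nonautonomous convention this should be $(dF^n_N)_{F^{N-1}(p)}$ (the composition $F_n \circ \cdots \circ F_N$, which indeed has $n-N+1$ factors); the argument itself is unaffected.
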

\begin{proof}
For each $L > 0$, we have
\[
\sum_{n = 1}^\infty \Leb (F^{n-1})^{-1} S_{n, L} = \sum_{n = 1}^\infty \Leb S_{n, L} \leq 2 K_1 M_0 L \sum_{n = 1}^\infty L_n^{-1} < \infty \, .
\]
The Borel Cantelli lemma thus applies to the sequence of sets $\{ (F^{n-1})^{-1} S_{n, L} \}_{n \geq 1}$, and so the set $S_L = \{ p \in \T^2 : F^{n-1} p \in S_{n, L} \, \, i. o. \}$ has zero Lebesgue measure. Taking $S = \cup_{N = 1}^\infty S_N$, it is now simple to check that \eqref{eq:infiniteExponent} holds for all $p \in \T^2 \setminus S$.
\end{proof}

Let us emphasize, however, that the limit \eqref{eq:infiniteExponent} is highly nonuniform in $x$, due to the fact that the critical strips $S_{n, L}$ have positive mass for all $n \geq 1$. We encode this nonuniformity in a way analogous to that of \emph{uniformity sets} (alternatively called \emph{Pesin sets}) for nonuniformly hyperbolic dynamics.

\subsubsection{Construction of uniformity sets for the composition $\{F^n\}$}\label{subsubsec:basicConstruct}

For our purposes in this paper, it is expedient to `fatten' the critical strips $S_{n, L}$ as follows. Let $\eta \in (0, 1)$, and for $n \geq 1$ define
\[
B_n(\eta) = \{ (x,y) \in \T^2 : d(x, \Cc_n) \leq  2 K_1 L_n^{-1 + \eta} \} \, .
\]
For $p = (x, y) \notin B_n(\eta)$, we have $|f_n'(x)| \geq 2 L_n^\eta$. In particular, for such $p$, we have that $(dF_n)_{p}$ preserves the cone $C_h$ and expands tangent vectors in $C_x$ by a factor $\geq L_n^\eta$. The parameter $\eta$ dictates the proportion of expansion we recover in $(B_n(\eta))^c$, hence the tradeoff: the larger $\eta$, hence the more expansion we demand away from the bad set $B_n(\eta)$, but the larger the bad sets $B_n(\eta)$ become.  We note that $\eta$ appears throughout the paper and is often fixed in advance; as such, for simplicity we often write $B_n = B_n(\eta)$.




For $p \in \T^2$, define
\begin{align*}
\tau(p) &= 1 + \max \{ m \geq 1 : F^{m-1}p \in B_m \} \\
& = \min\{ k \geq 1 : F^{n-1}p \notin  B_n \,\, \text{ for all } n \geq k \} \, ;
\end{align*}
in particular, for a given orbit $\{ p_n = F^n p \}$, the derivative mapping $(dF_n)_{p_{n-1}}$ is uniformly expanding along the horizontal cone $C_h$ for all $n \geq \tau(p)$. In this way, the sets
\[
\Gamma_N = \{ \tau(p) \leq N\} \, .
\]
can be thought of as \emph{uniformity sets} for the composition $\{F^n\}_{n \geq 1}$. Repeating the proof of Lemma  \ref{lem:infiniteExponent} yields the following.

\begin{lem} \label{lem:borelCantelli}
Fix $\eta \in (0,1)$, and assume that $\sum_{n = 1}^\infty L_n^{-1 + \eta} < \infty$. Then, $\tau < \infty$ almost surely, and $\cup_N \Gamma_N$ has full Lebesgue measure.
\end{lem}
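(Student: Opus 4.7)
The plan is to mirror the Borel--Cantelli argument already used in Lemma \ref{lem:infiniteExponent}, now applied to the fattened strips $B_n = B_n(\eta)$ instead of the thin strips $S_{n,L}$. The key observation is that, since each $F_n$ is volume-preserving (as a skew-product of the form $(x,y) \mapsto (f_n(x) - y, x) \mod 1$, its Jacobian is identically $1$), the composition $F^{n-1}$ is also Lebesgue-preserving, so $\Leb((F^{n-1})^{-1} B_n) = \Leb(B_n)$.

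First I would estimate $\Leb(B_n)$ directly from the definition: since $\Cc_n$ consists of at most $M_0$ points in $\T^1$ and $B_n$ is a union of vertical strips of horizontal width $4 K_1 L_n^{-1+\eta}$ around each critical point, we get
\[
\Leb(B_n) \leq 4 K_1 M_0 L_n^{-1+\eta} \, .
\]
Summing over $n$ and invoking the hypothesis $\sum_n L_n^{-1+\eta} < \infty$,
\[
\sum_{n=1}^\infty \Leb\bigl((F^{n-1})^{-1} B_n\bigr) = \sum_{n=1}^\infty \Leb(B_n) \leq 4 K_1 M_0 \sum_{n=1}^\infty L_n^{-1+\eta} < \infty \, .
\]

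By the Borel--Cantelli lemma, the set $E = \{p \in \T^2 : F^{n-1} p \in B_n \text{ for infinitely many } n\}$ has Lebesgue measure zero. But unwinding the definition of $\tau$, one has $\tau(p) = \infty$ precisely when $F^{n-1} p \in B_n$ for infinitely many $n$, i.e.\ $\{\tau = \infty\} = E$. Hence $\tau < \infty$ almost surely.

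Finally, $\bigcup_N \Gamma_N = \{p : \tau(p) < \infty\}$ is the complement of $E$, so it has full Lebesgue measure. There is no real obstacle here: the proof is a direct transcription of the Lemma \ref{lem:infiniteExponent} argument, with the only mild care being (i) to note that $F^{n-1}$ preserves Lebesgue measure and (ii) to compute $\Leb(B_n)$ explicitly from (H2).
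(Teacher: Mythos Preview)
Your proposal is correct and follows exactly the approach the paper intends: the paper's proof consists of the single sentence ``Repeating the proof of Lemma \ref{lem:infiniteExponent} yields the following,'' together with the tail estimate $\Leb\{\tau > N\} \leq \sum_{n=N}^\infty \Leb(F^{n-1})^{-1} B_n = \sum_{n=N}^\infty \Leb B_n = O(\sum_{n=N}^\infty L_n^{-1+\eta})$, which is precisely the Borel--Cantelli argument you wrote out.
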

\noindent Indeed, we have the estimate
\[
\Leb \{ \tau > N \} \leq \sum_{n = N}^\infty \Leb(F^{n-1})^{-1} B_n = \sum_{n = N}^\infty \Leb B_n = O \bigg( \sum_{n = N}^\infty L_n^{-1 + \eta} \bigg) \, .
\]


\subsection{Horizontal curves}\label{subsec:horizontalCurves}

Curves roughly parallel to unstable directions, sometimes called $u$-curves in the literature, are an effective and well-used tool for describing the mixing mechanism of hyperbolic dynamical systems: the elongation of such curves under successive applications of hyperbolic dynamics leads to their proliferation through phase space, resulting in mixing. These ideas are standard for (autonomous) smooth dynamical systems exhibiting hyperbolicity; see, e.g., \cite{dolgopyat2005averaging, pesin1982gibbs, ruelle1976measure}.

In the setting of this paper, \emph{horizontal curves} play the role of $u$-curves. Although much of the material in this section is standard for iterates of a single map, we note that the maps $F_n$ in our compositions become more singular as $n$ increases. So, it is important to ensure that the necessary estimates (e.g. distortion control) do not worsen with $n$. For this reason, we re-prove below in \S \ref{subsec:horizontalCurves} what are otherwise standard results in hyperbolic dynamics.

\bigskip

The point of departure is an identification of a class of curves `roughly parallel to unstable (horizontal) directions'.

\begin{defn}\label{defn:horizCurve}
A \emph{horizontal curve} is a connected $C^2$ curve $\gamma \subset \T^2$ with the property that 
$\gamma = \{(x, h_\gamma(x)) : y \in I_\gamma \}$ for some (open, proper) subarc $I_\gamma \subset \T^1$
 and some Lipschitz continuous function $h_\gamma : I_\gamma \to \T^1$ with $\Lip h_\gamma \leq 1/10$.
\end{defn}

The plan is as follows. In Lemma \ref{lem:forwardsGT} below we describe the evolution of horizontal curves under successive iterates of our nonautonomous compositions $\{ F_m^n, m \leq n \}$ when these curves are assumed to avoid the critical strips $B_n$ for each $n$. Lemma \ref{lem:distControlHorizontalCurves} is a distortion estimate between trajectories evolving on the same horizontal curve. Finally, Lemma \ref{lem:multMixingPrep} considers the time evolution of \emph{sufficiently long} horizontal curves which are allowed to meet bad sets.

\bigskip

The following is description of the geometry of successive images of horizontal curves which do not meet the bad sets $\{ B_n\}$.

\begin{lem}[Forward graph transform]\label{lem:forwardsGT}
Fix $\eta \in (0,1)$; then, the following holds whenever $L_0$ is sufficiently large (depending on $\eta$). Let $N \geq 1$, and let $\gamma \subset \T^2$ be a $C^2$ horizontal curve of the form $\gamma = \gamma_N 
= \graph g_N = \{(x, g_N(x)) : x \in I_N\}$, where $I_N \subset \R$ and $g_N : I_N \to \R$ is
a $C^2$ function for which $\|g_N'\|_{C^0} \leq 1/10$ and $\|g_N''\|_{C^0} \leq 1$.

Let $n > N$, and assume 
that for all $N \leq k \leq n-1$, we have that \[F^{k-1}_N(\gamma) \cap  B_k = \emptyset \, .\] 

Then
for each $N \leq k \leq n$, we have that $\gamma_k =  F^{k-1}_N(\gamma)$ is a horizontal curve
of the form $\graph g_k = \{(x, g_k(x)) : x \in I_k\}$ for an interval $I_k \subset \T^1$ and a $C^2$ function
$g_k : I_k \to \T^1$ for which
\begin{itemize}
\item[(a)] We have the bounds
\[
\| g_k' \|_{C^0} \leq L_{k-1}^{- \eta} \quad \text{ and } \quad \| g_k'' \|_{C^0} \leq 2 K_0 L_{k - 1}^{- 3 \eta + 1} \, ; \text{ and}
\]
\item[(b)] for any $p_N^i \in \gamma, i = 1,2$, writing $F^{k-1}_N p_N^i = p_k^i$, we have that
\[
\| p_k^1 - p_k^2\| \leq L_k^{\eta} \| p_{k + 1}^1 - p_{k + 1}^2\| \, .
\]
\end{itemize}
\end{lem}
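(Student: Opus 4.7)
The plan is an induction on $k$, with the base case $k = N$ handed over by the hypothesis and the inductive step propagating the graph transform one iterate forward. For the inductive step, write $\gamma_k = \{(x, g_k(x))\}$; then $F_k(x, g_k(x)) = (X(x), x)$ with $X(x) := f_k(x) - g_k(x) \pmod 1$. Since $\gamma_k \cap B_k = \emptyset$, hypothesis (H3) gives $|f_k'(x)| \ge 2 L_k^\eta$ everywhere on the $x$-projection of $\gamma_k$, and combined with the inductive slope bound $|g_k'| \le 1/10$, this gives $|X'(x)| = |f_k'(x) - g_k'(x)| \ge L_k^\eta$ once $L_0$ is large. Hence $X$ is a local diffeomorphism, $\gamma_{k+1}$ is locally a graph $\{(X, g_{k+1}(X))\}$, and the chain rule yields
\[
g_{k+1}'(X) = \frac{1}{X'(x)}\, , \qquad g_{k+1}''(X) = -\frac{X''(x)}{X'(x)^3}\, .
\]

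The first-derivative estimate $|g_{k+1}'| \le L_k^{-\eta}$ is then immediate. For the second derivative, (H1) gives $|f_k''| \le K_0 L_k$ and the inductive hypothesis gives $|g_k''| \le 2 K_0 L_{k-1}^{1 - 3\eta}$, so together with $|X'|^3 \ge L_k^{3\eta}$,
\[
|g_{k+1}''| \le K_0 L_k^{1-3\eta} + 2 K_0 L_{k-1}^{1-3\eta} L_k^{-3\eta}\, .
\]
Whether $1-3\eta$ is positive or negative, the monotonicity $L_{k-1} \le L_k$ (handling the case $1-3\eta \ge 0$) together with a uniform bound $L_{k-1}^{1-3\eta} \le L_0^{1-3\eta}$ (handling $1-3\eta < 0$) shows, once $L_0$ is large enough in $\eta$, that the second summand is absorbed into the first, giving $|g_{k+1}''| \le 2 K_0 L_k^{1-3\eta}$ and closing the induction.

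For part (b), apply the MVT to $f_k - g_k$ on the arc joining $x_k^1, x_k^2$ to obtain $|X(x_k^1) - X(x_k^2)| \ge L_k^\eta |x_k^1 - x_k^2|$, and use the small-slope bounds on $\gamma_k, \gamma_{k+1}$ to compare $\|p_k^1 - p_k^2\|$ with $|x_k^1 - x_k^2|$ and $\|p_{k+1}^1 - p_{k+1}^2\|$ with $|X(x_k^1) - X(x_k^2)|$ up to factors close to $1$. This yields in fact the stronger estimate $\|p_k^1 - p_k^2\| \lesssim L_k^{-\eta} \|p_{k+1}^1 - p_{k+1}^2\|$, from which the stated bound follows a fortiori.

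The principal obstacle is the self-referential nature of the second-derivative estimate: the $g_k''$ term reappears in the bound for $g_{k+1}''$, and without any quantitative growth hypothesis on $L_n / L_{n-1}$, it must be dominated by the fresh $f_k''$ contribution purely through taking $L_0$ large. This is what forces the constant $2$ in the bound $\|g_k''\|_{C^0} \le 2 K_0 L_{k-1}^{1-3\eta}$, rather than a clean $K_0 L_{k-1}^{1-3\eta}$, and requires $L_0$ to be enlarged in an $\eta$-dependent way. A minor subtlety is the mod-$1$ reduction in defining $X$, which is handled by working with local lifts of $\gamma_k$ to $\R^2$ on sufficiently short subarcs.
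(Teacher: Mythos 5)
Your proposal is correct and follows essentially the same route as the paper: the lifted graph transform, inverse-function-theorem identities for $g_{k+1}'$ and $g_{k+1}''$ in terms of $\tilde f_k' = f_k' - g_k'$ and $\tilde f_k''$, and closing the self-referential second-derivative bound by enlarging $L_0$. Your explicit sign case analysis on $1-3\eta$ and your observation that (b) actually holds with the sharper factor $L_k^{-\eta}$ are both sound but make no difference to the argument, which matches the paper's proof in all essentials.
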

\begin{proof}
The proof is a standard graph transform argument, which we recall here. It suffices to describe the induction step, that is, the procedure for obtaining $g_{k+1}$ from $g_k$ for $N \leq k \leq n-1$. 

To start, define the `lifted' map $\tilde F_k: \T^2 \to \R \times \T^1$ by setting $\tilde F_k(x,y) = (f_k(x) - y, x)$ (that is, without the `$\modone$' in the first coordinate). Projecting $\tilde F_k(x, g_k(x))$ to the first coordinate results in a function $\tilde f_k : I_k \to \R$ of the form $\tilde f_k(x) = f_k(x) - g_k(x)$.

Since $\gamma_k \cap B_k = \emptyset$, we have $|f_k'| \geq 2 L_k^{\eta}$ on $I_k$, and so $|\tilde f_k'| \geq 2 L_k^\eta - 1/10 > 0$ (on taking $L_0 > 1$). In particular, $\tilde f_k : I_k \to \R$ is invertible on its image $\tilde I_{k+1}$. Defining $I_{k+1} \subset \T^1$ to be the projection of $\tilde I_{k+1}$ to $\T^1$, we define $g_{k+1} : I_{k+1} \to \T^1$ to be the (uniquely determined) mapping for which $g_{k+1} \big( \tilde f_k(x) \modone \big) = x$ for all $x \in I_k$. This completes the description of the induction step.

The estimates in item (a) is now derived from the implicit derivatives
\begin{gather*}
g_k'(x) = \frac{1}{(f_{k-1}' - g_{k - 1}')(g_k(x))} \quad \text{ and } \quad
g_k''(x) = - \frac{(f_{k-1}'' - g_{k - 1}'')}{(f_{k-1}' - g_{k - 1}')^3} (g_k(x))  \, .
\end{gather*}
The estimate in (b) follows from the bound $|(\tilde f_k)'| \geq 2 L_k^{\eta} - 1/10 \geq L_k^{\eta}$. All estimates require taking $L_0$ sufficiently large depending on $\eta$.
\end{proof}

Next we obtain distortion estimates along forward iterates of horizontal leaves in the setting of Lemma \ref{lem:forwardsGT}. 
\begin{lem}\label{lem:distControlHorizontalCurves}
Assume the setting of Lemma \ref{lem:forwardsGT}. Let $p_N^i \in \gamma, i = 1,2$, and write $p_n^i = F^{n-1}_N p_N^i$. Then
\[
\bigg| \log \frac{\| (dF^{n-1}_N)_{p_N^1} |_{T \gamma}\|}{\| (dF^{n-1}_N)_{p_N^2} |_{T \gamma}\|} \bigg| = O \big( L_N^{1 - 2 \eta} \|p_n^1 - p_n^2\| \big) \, .
\]
\end{lem}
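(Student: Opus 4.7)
The argument is a standard telescoping chain-rule distortion estimate. The key hypothesis supplied by Lemma \ref{lem:forwardsGT} is that $\gamma_k = F^{k-1}_N(\gamma)$ stays disjoint from $B_k$ throughout the iteration, which gives $|f_k'(x)| \geq 2L_k^\eta$ on $\gamma_k$ together with the sharper graph bounds $\|g_k'\|_{C^0} \leq L_{k-1}^{-\eta}$ and $\|g_k''\|_{C^0} = O(L_{k-1}^{1-3\eta})$.

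First I would decompose the log-ratio as a telescoping sum. Setting $u_k^i := (dF^{k-1}_N)_{p_N^i}|_{T\gamma}$, which is tangent to $\gamma_k$ at $p_k^i$, one has $\|u_{k+1}^i\|/\|u_k^i\| = \|(dF_k)_{p_k^i}|_{T\gamma_k}\|$, so
\begin{equation*}
\log \frac{\|(dF^{n-1}_N)_{p_N^1}|_{T\gamma}\|}{\|(dF^{n-1}_N)_{p_N^2}|_{T\gamma}\|} = \sum_{k=N}^{n-1} \Bigl( \log \|(dF_k)_{p_k^1}|_{T\gamma_k}\| - \log \|(dF_k)_{p_k^2}|_{T\gamma_k}\| \Bigr).
\end{equation*}
I would then express each factor explicitly: for $p = (x, g_k(x)) \in \gamma_k$ with tangent direction $(1, g_k'(x))$, the relation $(dF_k)_p(1, g_k'(x)) = (f_k'(x) - g_k'(x), 1)$ yields
\begin{equation*}
\|(dF_k)_p|_{T\gamma_k}\|^2 = \frac{(f_k'(x) - g_k'(x))^2 + 1}{1 + g_k'(x)^2}.
\end{equation*}
Differentiating the logarithm of this in $x$ and using (H1) to bound $|f_k''| \leq K_0 L_k$, the lower bound $|f_k' - g_k'| \geq L_k^\eta$ arising from $\gamma_k \cap B_k = \emptyset$ together with $|g_k'| \leq 1/10$, and the $C^2$ bound on $g_k$ from Lemma \ref{lem:forwardsGT}(a), I would show the logarithmic derivative is controlled by its leading term $f_k''/(f_k' - g_k')$ and is therefore $O(L_k^{1-\eta})$. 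A mean-value step along $\gamma_k$ (whose arclength between $p_k^1$ and $p_k^2$ is comparable to $\|p_k^1 - p_k^2\|$ by the slope bound) then gives
\begin{equation*}
\bigl|\log \|(dF_k)_{p_k^1}|_{T\gamma_k}\| - \log \|(dF_k)_{p_k^2}|_{T\gamma_k}\|\bigr| = O(L_k^{1-\eta}) \|p_k^1 - p_k^2\|.
\end{equation*}

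To finish, I would sum the per-step bounds and feed in the forward expansion from Lemma \ref{lem:forwardsGT}(b) iterated backward in time, giving $\|p_k^1 - p_k^2\| = O\bigl(\prod_{j=k}^{n-1} L_j^{-\eta}\bigr) \|p_n^1 - p_n^2\|$. Substituting,
\begin{equation*}
\sum_{k=N}^{n-1} L_k^{1-\eta} \|p_k^1 - p_k^2\| = O\Bigl( \|p_n^1 - p_n^2\| \sum_{k=N}^{n-1} L_k^{1-2\eta} \prod_{j=k+1}^{n-1} L_j^{-\eta} \Bigr).
\end{equation*}
Since $\{L_k\}$ is nondecreasing we have $L_k^{1-2\eta} \leq L_N^{1-2\eta}$ in the regime where the bound is useful, and each factor $L_j^{-\eta} \leq L_0^{-\eta} < 1$ for $L_0$ taken sufficiently large, so the remaining inner sum is a convergent geometric series of size $O(1)$. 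This delivers the claimed $O\bigl(L_N^{1-2\eta}\|p_n^1 - p_n^2\|\bigr)$. The main obstacle is securing the sharp exponent $1-\eta$ in the per-step logarithmic derivative: without the quantitative $C^2$ bound on $g_k$ from Lemma \ref{lem:forwardsGT}(a), the curvature contribution from the graph itself would compete with the main $f_k''/(f_k' - g_k')$ term, and the final telescoped exponent would be strictly worse than $1-2\eta$.
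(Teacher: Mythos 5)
Your proof is correct and follows essentially the same route as the paper's: the chain-rule telescoping, the explicit formula for the expansion factor along $\gamma_k$, the per-step bound $O(L_k^{1-\eta})|x_k^1-x_k^2|$ coming from $f_k''/(f_k'-g_k')$, and then the backward iteration of the expansion bound from Lemma~\ref{lem:forwardsGT}(b) yielding a geometrically convergent sum dominated by $L_N^{1-2\eta}$. The only cosmetic difference is that the paper exploits the cancellation of the $(1+(g_{k+1}')^2)/(1+(g_k')^2)$ factors across consecutive steps, isolating two boundary terms, whereas you keep these factors inside each summand and observe (correctly) that their logarithmic derivative contributes only $O(L_{k-1}^{1-4\eta})$ per step, which is negligible against the $O(L_k^{1-\eta})$ main term.
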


\begin{rmk}
The above bound is quite poor unless $\eta \in (1/2, 1)$, which is why in Theorem \ref{thm:decayCorrelations}, and indeed throughout the paper, we will work exclusively in the setting where $\eta \in (1/2, 1)$. Of course, the lower the value of $\eta$, the stronger the decay of correlations estimate in Theorem \ref{thm:decayCorrelations}. It is likely that lowering $\eta$ is possible: one way to accommodate the distortion estimate in Lemma \ref{lem:distControlHorizontalCurves} is to further subdivide images of the curve $\gamma$ into pieces of size $\ll L_n^{1 - 2 \eta}$.
\end{rmk}

%
%

\begin{proof}[Proof of Lemma \ref{lem:distControlHorizontalCurves}]
Write $p_k^i = F^{k-1}_N(p_N^i) = (x_k^i, y_k^i)$. Let $\gamma_k = F^{k-1}_N(\gamma)$ and $g_k : I_k \to \T^1, I_k \subset \T^1$ be as in Lemma \ref{lem:forwardsGT}. Then,
\[
\| (dF_k)_{p_k^i}|_{T \gamma_k} \| = \sqrt{\frac{1 + (g_{k + 1}'(x_{k + 1}^i))^2}{1 + (g_k'(x_k^i))^2}} |f_k'(x_k^i) - g_k'(x_k^i) | \, ,
\]
and so
\begin{align} \label{eq:distortionEstimateExpansion}
\log \frac{\| (dF_N^{n-1})_{p_N^1}|_{T \gamma} \|}{\| (dF_N^{n-1})_{p_N^2}|_{T \gamma} \|} = 
\frac12 \bigg(  \log \frac{1 + (g_N'(x_N^2))^2}{1 + (g_N'(x_N^1))^2} + \log \frac{1 + (g_n'(x_n^1))^2}{1 + (g_n'(x_n^2))^2}  \bigg) + \sum_{k = N}^{n-1} \log \frac{f_k'(x_k^1) - g_k'(x_k^1)}{f_k'(x_k^2) - g_k'(x_k^2)} \, .
\end{align}
For the first two terms, observe that for $\beta_1, \beta_2 \in [0, \infty)$, we have the elementary bound
$|\log (1 + \beta_1) - \log (1 + \beta_2)| \leq |\beta_1 - \beta_2|$,
and so for $k = N, n$, we have
\[
\bigg| \log \frac{1 + (g_k'(x_k^1))^2}{1 + (g_k'(x_k^2))^2} \bigg| \leq |(g_k'(x_k^1))^2 - (g_k'(x_k^2))^2| \leq 2 |g_k'(x_k^1) - g_k'(x_k^2)| \leq 2 \Lip(g_k'') \cdot |x_k^1 - x_k^2| \, .
\]
Applying the expansion estimate along images of horizontal curves as in Lemma \ref{lem:forwardsGT}(a),
\begin{align}\label{eq:expansionHorizCurves}
|x_k^1 - x_k^2| \leq L_k^{- \eta} |x^1_{k + 1} - x^2_{k + 1}| \leq \cdots \leq L_k^{- \eta} \cdots L_{n - 1}^{- \eta} |x^1_n - x^2_n| 
\end{align}
and the estimate $\Lip (g_k'') \leq 2 K_0 L_k^{1 - 3 \eta}$ coming from Lemma \ref{lem:forwardsGT}, we obtain the following upper bound for the first two terms in \eqref{eq:distortionEstimateExpansion}:
\[
\Lip(g_N'') \cdot |x_N^1 - x_N^2| + \Lip(g_n'') \cdot |x_n^1 - x_n^2| \leq 2 K_0 L_N^{1 - 3 \eta} (1 + L_N^{- (n-N) \eta}) |x_n^1 - x_n^2| \, . 
\]
Thus these terms are $O(L_N^{1-3\eta})$.

We now estimate the summation term in \eqref{eq:distortionEstimateExpansion}. With $\tilde f_k = f_k - g_k : I_k \to \R$ as in the proof of Lemma \ref{lem:forwardsGT}, we have that
\[
|\log \tilde f_k'(x_k^1) - \log \tilde f_k'(x_k^2)| \leq   \frac{\sup_{\zeta \in I_k} |\tilde f_k''(\zeta)|}{\inf_{\zeta \in I_k} |\tilde f_n'(\zeta)|} \cdot |x_k^1 - x_k^2|  \leq 2 K_0 L_k^{1 - \eta} |x_k^1 - x_k^2|  \, . 
\]
Applying \eqref{eq:expansionHorizCurves} and collecting,
\begin{align*}
\bigg| \log \frac{(\tilde f_N^{n-1})'(x^1_N)}{(\tilde f_N^{n-1})'(x^2_N)} \bigg| & \leq 
2 K_0 \bigg( \sum_{k = N}^{n-1} \frac{L_k^{1 - \eta}}{L_k^{\eta} L_{k + 1}^{\eta} \cdots L_{n-1}^{\eta}} \bigg) |x^1_n - x^2_n| \\
& \leq 2 K_0 L^{1 - 2\eta}_N \bigg( \sum_{k = N}^{n-1} L_N^{- (n - 1 - k) \eta}\bigg) |x_n^1 - x_n^2| \\ 
& \leq 3 K_0 L^{1 - 2\eta}_N \| p_n^1 - p_n^2\| 
\end{align*}
when $L_0$ is taken suitably large. 
This completes the estimate.
\end{proof}

The above results describe the dynamics of a horizontal curve $\gamma$ which `avoids' the bad sets $\{B_n\}$ for some amount of time. On the other hand, if a given horizontal curve is allowed to meet the bad sets along its trajectory, then we lose control over the geometry where these iterates meet bad sets. Below we describe an algorithm for excising those parts of a curve which fall into the bad set and describe the geometry of the parts of $\gamma$ with a `good' trajectory.

\medskip

We say that a horizontal curve $\gamma$ is \emph{fully crossing} if $I_\gamma = (0,1)$ (all notation here and below is as in Definition \ref{defn:horizCurve}).

\begin{lem}\label{lem:multMixingPrep}
Fix $\eta \in (1/2, 1)$. Let $\gamma$ be a horizontal curve. Then, for any $m \geq 1, k \geq m$, there is a set $\Bc_m^k(\gamma) \subseteq \gamma$ and a partition (possibly empty) of $\bar \Gamma_m^k(\gamma)$ of $F^k_m(\gamma \setminus \Bc_m^k(\gamma))$ into fully crossing curves with the following properties.
\begin{itemize}
\item[(a)] For any $\bar \gamma \in \bar \Gamma_m^k(\gamma)$, we have $\| h_{\bar \gamma}'\|_{C^0} \leq L_k^{- \eta}$.
\item[(b)] We have the estimate
\[
\Leb_\gamma( \Bc_m^k(\gamma)) = O\bigg(\sum_{i = m}^k L_i^{-1+ \eta} \bigg) \, .
\]
\item[(c)] For any $\bar \gamma \in \bar \Gamma_m^k(\gamma)$ and any $p, p' \in (F_m^k)^{-1}\bar \gamma$, we have
\[
\frac{\| (d F_m^k)_p|_{T \gamma}\| }{\| (d F_m^k)_{p'}|_{T \gamma}\| }
= 1 + O(L_m^{1 - 2 \eta}) 
\]
\end{itemize}
\end{lem}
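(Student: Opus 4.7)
I would construct $\bar\Gamma_m^k(\gamma)$ and $\Bc_m^k(\gamma)$ by induction on $k$, via a pruning procedure. Initialize $\Gamma_{m-1} := \{\gamma\}$ and $\Bc_m^{m-1}(\gamma) := \emptyset$. For the inductive step $k \rightsquigarrow k+1$, process each $\bar\gamma \in \Gamma_k$ as follows: (i) delete $\bar\gamma \cap B_{k+1}$, leaving at most $M_0 + 1$ horizontal subarcs; (ii) apply $F_{k+1}$ to each subarc, which by Lemma \ref{lem:forwardsGT} (used in the one-step case $N = k+1$, $n = k+2$) yields a horizontal curve of slope $\leq L_{k+1}^{-\eta}$ with the curvature required to re-enter the inductive hypothesis; (iii) slice each image into fully crossing pieces by cutting along integer translates of the $y$-axis in the lift to $\R \times \T^1$, producing at most two non-fully-crossing remnants per component. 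The fully crossing pieces form $\Gamma_{k+1}$; the $(F_m^k)^{-1}$-preimages of the deleted $B_{k+1}$-portions and of the remnants are added to $\Bc_m^{k+1}(\gamma)$.

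\textbf{Properties (a) and (c).} Property (a) is immediate from Lemma \ref{lem:forwardsGT}(a) applied step-by-step. For (c), by construction the arc in $\gamma$ connecting any two $p, p' \in (F_m^k)^{-1}(\bar\gamma)$ has forward iterates avoiding $B_m, \ldots, B_k$, so Lemma \ref{lem:distControlHorizontalCurves} (with $N = m$, $n = k+1$) gives
\[
\left| \log \frac{\|(dF_m^k)_p|_{T\gamma}\|}{\|(dF_m^k)_{p'}|_{T\gamma}\|} \right| = O\bigl(L_m^{1-2\eta}\, \|F_m^k p - F_m^k p'\|\bigr) = O\bigl(L_m^{1-2\eta}\bigr),
\]
the second equality using $\|F_m^k p - F_m^k p'\| \leq \diam(\bar\gamma) = O(1)$ for a fully crossing curve. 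Exponentiating yields (c).

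\textbf{Property (b).} The measure estimate is where the real work lies. The guiding bookkeeping principle is: the inverse map $r_k := (F_m^k|_{\text{good}})^{-1}$ is bi-Lipschitz on each $\bar\gamma \in \Gamma_k$ with essentially constant distortion (by (c)), and the $r_k$-images tile $\gamma$'s good part, so $\sum_{\bar\gamma \in \Gamma_k} \Leb_\gamma(r_k(\bar\gamma)) \leq |\gamma|$. Consequently, if each $\bar\gamma$ loses a fraction $\theta$ of its arclength at step $k \to k+1$, then the $r_k$-preimage of the lost portion has total $\Leb_\gamma$-mass $O(\theta\, |\gamma|)$. Two sources of loss must be bounded: (i) the $B_{k+1}$-intersection has arclength $O(L_{k+1}^{-1+\eta})$ on each $\bar\gamma$ (since $B_{k+1}$ is a union of $\leq M_0$ vertical strips of width $O(L_{k+1}^{-1+\eta})$, crossed transversally by $\bar\gamma$ of slope $\leq L_k^{-\eta} \leq 1/10$), a fraction $O(L_{k+1}^{-1+\eta})$ of $|\bar\gamma| \approx 1$; and (ii) the remnants at time $k+1$ have total arclength $O(1)$ per $\bar\gamma$ (at most $2(M_0+1)$ remnants, each of arclength $\leq 2$), which pull back under $F_{k+1}$ via $|f_{k+1}'| \geq 2 L_{k+1}^\eta$ to arclength $O(L_{k+1}^{-\eta})$ on $\bar\gamma$. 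Since $\eta > 1/2$, $L_{k+1}^{-\eta} \leq L_{k+1}^{-1+\eta}$, so the remnant contribution is absorbed into the $B$-contribution. Summing over the steps $m-1 \to m, \ldots, k-1 \to k$ yields $\Leb_\gamma(\Bc_m^k(\gamma)) = O\bigl(\sum_{i=m}^k L_i^{-1+\eta}\bigr)$.

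\textbf{Main obstacle.} The delicate part is (b). A naive approach that tries to track $|\Gamma_k|$ directly would fail, because $|\Gamma_k|$ can grow like $\prod L_i$ while the inverse Jacobian decays only like $\prod L_i^\eta$, giving an unbounded ratio. The conservation argument sidesteps this by never counting $|\Gamma_k|$ at all, instead exploiting the partition identity $\sum_{\bar\gamma} \Leb_\gamma(r_k(\bar\gamma)) \leq |\gamma|$; the hypothesis $\eta > 1/2$ enters exactly at the point where the remnant loss $L_{k+1}^{-\eta}$ must be absorbed into the main loss $L_{k+1}^{-1+\eta}$.
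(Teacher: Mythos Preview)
Your proposal is correct and follows essentially the same approach as the paper: the same inductive prune-push-slice construction, the same appeal to Lemma \ref{lem:forwardsGT} for (a) and Lemma \ref{lem:distControlHorizontalCurves} for (c), and the same conservation argument for (b) via the distortion-controlled pullback of the per-step loss $O(L_{l+1}^{-1+\eta})$ against the partition identity $\sum_{\bar\gamma}\Leb_\gamma((F_m^l)^{-1}\bar\gamma)\leq|\gamma|$. Your observation that $\eta>1/2$ is exactly what absorbs the remnant loss $L_{k+1}^{-\eta}$ into $L_{k+1}^{-1+\eta}$ matches the paper's implicit use of this inequality.
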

\noindent When $k = m$, we write $\bar \Gamma_m(\gamma) = \bar \Gamma_m^m(\gamma), \Bc_m(\gamma) = \Bc_m^m(\gamma)$ for short.

\medskip

Observe that Lemma \ref{lem:multMixingPrep} is inherently limited in two ways: (i) it is a \emph{finite-time result}: for a given curve $\gamma$ and fixed $m \geq 1$, we have $\Bc_m^k(\gamma) = \gamma$ for all $k$ sufficiently large; and (ii) if $\gamma$ is too short, then we may even have $\gamma = \Bc_m(\gamma)$.

\begin{proof}[Proof of Lemma \ref{lem:multMixingPrep}]
Below, $\tilde F_m : \T^2 \to \R \times \T^1$ is as defined in the proof of Lemma \ref{lem:forwardsGT}. To start, we define $\bar \Gamma_m(\gamma), \Bc_m(\gamma)$ as follows. 

For each connected component $\gamma_i, 1 \leq i \leq k$, of $\gamma \setminus B_m$, the image $\tilde \gamma_i = \tilde F_m(\gamma_i)$ is of the form $\graph \tilde h_i$ where $\tilde h_i : \tilde I_i \to \T^1$ for an interval $\tilde I_i \subset \R$ of the form $(a_i - r_i, b_i + s_i)$, where $a_i, b_i \in \Z, r_i, s_i \in [0,1)$. 

If $a_i = b_i$, then we set $\bar \Gamma_m(\gamma) = \emptyset$ and $\Bc_m(\gamma) = \gamma$, checking that if this is indeed the case, then $\Leb_\gamma(\gamma) = O(L_m^{-1 + \eta})$ follows.

When, $a_i < b_i$, we define $\bar \Gamma_m(\gamma)$ to be the collection of curves of the form $\graph \tilde h_i(\cdot + l)$ (projected to $\T^2$) for $l = a_i, \cdots, b_i - 1$. We set 
\[\Bc_m(\gamma) = ( \gamma \cap B_m) \cup \bigcup_{i =1 }^k (\tilde F_m)^{-1} \graph (\tilde h_i|_{(a_i - r_i, a_i) \cup (b_i, b_i + s_i)}) \, .
\]
For each curve of the form $\hat \gamma = (\tilde F_m)^{-1}(\graph \tilde h_i|_{(a_i - r_i, a_i)})$, we have
\[
\Leb_\gamma( \hat \gamma) = O(L^{- \eta}_m)
\]
since $\gamma_i \cap B_m = \emptyset$, and similarly for curves of the form $\hat \gamma = (\tilde F_m)^{-1}(\graph \tilde h_i|_{(b_i, b_i + s_i)})$. Combining this with the bound $\Leb_\gamma(\gamma \cap B_m) = O(L_m^{-1 + \eta})$, we conclude
\[
\Leb_\gamma(\Bc_m(\gamma)) = O(L_m^{-1 + \eta}) \, .
\]
Lastly, Item (c) holds for $k = m$ by Lemma \ref{lem:distControlHorizontalCurves}.

\medskip

Let us now describe the induction procedure for obtaining $\bar \Gamma_m^{l+1}(\gamma), \Bc_m^{l+1}(\gamma)$ $l < k$, assuming that $\bar \Gamma_m^l(\gamma)$ and $\Bc_m^l(\gamma)$ have been defined and that item (c) holds for $k = l$. We define
\begin{gather*}
\bar \Gamma_m^{l+1}(\gamma) := \bigcup_{\bar \gamma \in \bar \Gamma_m^l(\gamma)} \bar \Gamma_{l+1}(\bar \gamma) \, , \text{ and} \\
\Bc_m^{l+1}( \gamma) = \Bc_m^l(\gamma) \cup (F_m^l)^{-1} \bigcup_{\bar \gamma \in \bar \Gamma_m^l(\gamma)} \Bc_{l+1}(\bar \gamma) \, .
\end{gather*}

Repeating the above steps until step $l = k$, we have that $\bar \Gamma^k_m(\gamma)$ is comprised of fully crossing horizontal curves $\bar \gamma$ for which $\| h_{\bar \gamma}' \|_{C^0} \leq L_k^{- \eta}$. Item (c) similarly follows by the distortion estimate in Lemma \ref{lem:distControlHorizontalCurves}. 

It remains to estimate the size of $\Bc_m^k(\gamma)$. We have for each $m \leq l < k$ that
\[
\Leb_\gamma(\Bc_m^{l+1}(\gamma)) = \Leb_\gamma(\Bc_m^l(\gamma)) + \Leb_\gamma(F_m^l)^{-1} \bigcup_{\bar \gamma \in \bar \Gamma_m^l(\gamma)} \Bc_{l+1}(\bar \gamma) \, .
\]
For each $\bar \gamma \in \bar \Gamma_m^l(\gamma)$, we have $\Leb_{\bar \gamma} \Bc_{l+1}(\bar \gamma) = O(L_{l+1}^{-1+ \eta})$, and so 
\begin{align*}
\Leb_\gamma (F_m^l)^{-1} \bigcup_{\bar \gamma \in \bar \Gamma_m^l(\gamma)} \Bc_{l+1}(\bar \gamma)
&= \sum_{\bar \gamma \in \bar \Gamma_m^l(\gamma)} \Leb_{\bar \gamma} (F_m^l)^{-1} (\Bc_{l+1}(\bar \gamma))\\
&= (1 + O(L_m^{1 - 2 \eta})) \sum_{\bar \gamma \in \bar \Gamma_m^l(\gamma)}  \Leb_\gamma ((F_m^l)^{-1} \bar \gamma ) \cdot \frac{\Leb_{\bar \gamma}(\Bc_{l+1}(\bar \gamma))}{\Leb_{\bar \gamma}(\bar \gamma)} \\
&= O(L_{l+1}^{- 1 + \eta}) 
\end{align*}
having applied the distortion estimate in item (c) with $k = l$. This completes the estimate.
\end{proof}

\subsection{Decay of correlations for curves}\label{subsec:decayCorrelationsCurves}

The proliferation of horizontal curves throughout phase space is a mixing mechanism for our system. The estimates below justify this in the following sense: the Lebesgue mass along a given fully crossing horizontal curve spreads around throughout phase space in such a way as to appoximate Lebesgue measure very closely for Holder-continuous observables.

\begin{prop}\label{prop:CLTdocCurves}
Let $\eta \in (1/2, 1)$. Assume $L_1 \geq \bar L_0$, where $\bar L_0 = \bar L_0(M_0, K_0, K_1, \eta)$. Let $\gamma$ be a fully crossing horizontal curve, and let $\psi : \T^2 \to \R$ be $\a$-Holder continuous. For $1 \leq m \leq n$, we have
\[
\bigg| \int_\gamma \psi \circ F_m^n d \Leb_\gamma - \Len(\gamma) \cdot \int \psi  \bigg| \leq C \| \psi\|_\a \bigg( L_{n}^{- \a (1-\eta)/(\a + 2)} + L_m^{1-2 \eta} + \sum_{k = m}^{n-1} L_k^{-1 + \eta} \bigg)
\]
\end{prop}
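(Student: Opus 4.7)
The plan is to handle three sources of error -- the bad set, the multiplicative distortion along preimages, and a finite-time discrepancy in the $y$-direction -- which correspond to the three terms in the bound.

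First, I would apply Lemma~\ref{lem:multMixingPrep} with parameters $(m,n)$, decomposing $F_m^n(\gamma)$ into $\bigsqcup_{\bar\gamma \in \bar\Gamma_m^n(\gamma)} \bar\gamma$ together with the image of the bad set $\Bc_m^n(\gamma)$. The bad set contributes $O(\|\psi\|_{C^0} \sum_{k=m}^n L_k^{-1+\eta})$ to the integral; the $k=n$ term is absorbed into the first term of the bound, since $L_n^{-1+\eta} \leq L_n^{-\alpha(1-\eta)/(\alpha+2)}$ (equivalently $\alpha+2 \geq \alpha$). For each $\bar\gamma \in \bar\Gamma_m^n(\gamma)$ I would change variables and invoke the distortion estimate in Lemma~\ref{lem:multMixingPrep}(c):
\[
\int_{(F_m^n)^{-1}\bar\gamma} \psi \circ F_m^n \, d\Leb_\gamma = (1 + O(L_m^{1-2\eta})) \, \frac{\Len((F_m^n)^{-1}\bar\gamma)}{\Len(\bar\gamma)} \int_{\bar\gamma} \psi \, d\Leb_{\bar\gamma};
\]
summed over $\bar\gamma$, the relative error contributes the second term of the bound.

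Since $\bar\gamma$ is fully crossing with $\|h_{\bar\gamma}'\|_{C^0} \leq L_n^{-\eta}$, the $y$-coordinate along $\bar\gamma$ varies by at most $L_n^{-\eta}$. Choosing a representative height $y_{\bar\gamma}$ and using the Holder continuity of $\psi$ in $y$, together with the negligible arc-length correction $\sqrt{1+(h_{\bar\gamma}')^2} = 1+O(L_n^{-2\eta})$, I would reduce to
\[
\tfrac{1}{\Len(\bar\gamma)} \int_{\bar\gamma} \psi \, d\Leb_{\bar\gamma} = \Psi(y_{\bar\gamma}) + O(\|\psi\|_\alpha L_n^{-\alpha\eta}), \qquad \Psi(y) := \int_0^1 \psi(X,y) \, dX.
\]
Setting $w_{\bar\gamma} := \Len((F_m^n)^{-1}\bar\gamma)$ and $W := \sum_{\bar\gamma} w_{\bar\gamma}$, the problem collapses to the weighted equidistribution estimate
\[
\Bigl| \sum_{\bar\gamma \in \bar\Gamma_m^n(\gamma)} w_{\bar\gamma} \Psi(y_{\bar\gamma}) - W \int_0^1 \Psi(y) \, dy \Bigr| = O\bigl(\|\psi\|_\alpha \Len(\gamma) L_n^{-\alpha(1-\eta)/(\alpha+2)}\bigr).
\]

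For the equidistribution, the plan is to group the children $\bar\gamma$ by their parents $\bar\gamma' \in \bar\Gamma_m^{n-1}(\gamma)$ and analyze a single step of $F_n$. Because $F_n(x,y) = (f_n(x)-y, x)$, the new $y$-coordinate equals the old $x$-coordinate; so for each fixed parent, the $y$-ranges of its children tile $[0,1)$ (modulo the bad set) by intervals of length at most $L_n^{-\eta}$, and the distortion estimate implies that $w_{\bar\gamma}$ is (up to a factor $1+O(L_m^{1-2\eta})$) proportional to $w_{\bar\gamma'}'$ times the length of the $y$-tile, recognizing the inner sum as a midpoint Riemann sum for $\int \Psi$. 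To convert this into a Holder discrepancy bound with the claimed exponent, I would smooth $\Psi$ at a scale $\delta$ (incurring error $\|\psi\|_\alpha \delta^\alpha$) and bound the Lipschitz-type discrepancy of the smoothed sum using the tile size $L_n^{-\eta}$; optimizing $\delta$ balances the two errors and produces the exponent $\alpha(1-\eta)/(\alpha+2)$.

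The hard part is the quantitative equidistribution in the last step: one must carefully track the interplay of the weights $w_{\bar\gamma}$ with tile lengths through the distortion estimate, and the precise form of the Lipschitz discrepancy bound against the smoothed $\Psi$ is delicate. Everything else (the decomposition, the change-of-variables, the flat-curve approximation) is routine once Lemma~\ref{lem:multMixingPrep} is in hand.
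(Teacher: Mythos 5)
Your proposal is correct and takes a genuinely different route from the paper. The paper first discretizes $\psi$ into a step function on a $K\times K$ grid of squares of side $\ell$, applies Lemma~\ref{lem:multMixingPrep} with parameters $(m,n-1)$ (stopping one step short), handles the final map $F_n$ by intersecting each resulting fully crossing curve with vertical strips, and then treats each resulting rectangle-integral individually. The quantity $\Leb(R_{i,j}\cap F_m^{-n}R_{i',j'})$ is approximated by $\ell^3$ with an absolute error $O(L_n^{-1+\eta})$, which becomes a \emph{multiplicative} error $O(\ell^{-2}L_n^{-1+\eta})$; balancing this against the discretization error $\ell^\alpha$ forces $\ell\sim L_n^{-(1-\eta)/(\alpha+2)}$ and yields the exponent $\alpha(1-\eta)/(\alpha+2)$. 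You instead keep $\psi$ intact, apply the lemma all the way to time $n$, and reduce the problem to a weighted equidistribution of the $y$-heights $y_{\bar\gamma}$ of the fully crossing children against the averaged observable $\Psi(y)=\int_0^1\psi(x,y)\,dx$, exploiting the exact structural fact that $F_n$ sends the parent's $x$-coordinate to the child's $y$-coordinate so that the tiles $J_{\bar\gamma}$ are a mesh of $[0,1)$ of size $\leq L_n^{-\eta}$. This buys you two things: it avoids the 2D grid altogether, and it avoids the $\ell^{-2}$ loss. Indeed, a direct Riemann sum estimate (no smoothing needed) gives a discretization error $O(\|\psi\|_\alpha L_n^{-\alpha\eta})$ and a missing-mass error $O(\|\psi\|_{C^0}L_n^{-1+\eta})$, both of which are \emph{smaller} than $L_n^{-\alpha(1-\eta)/(\alpha+2)}$ for $\eta>1/2$; so your route actually proves a slightly stronger estimate, from which the Proposition follows a fortiori. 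One small correction: the balancing you describe in the last step does not produce $\alpha(1-\eta)/(\alpha+2)$ --- with tile size $L_n^{-\eta}$ it produces $\alpha\eta$. That exponent belongs to the paper's grid argument, not yours; but since $\alpha\eta\ge\alpha(1-\eta)/(\alpha+2)$ this misattribution is harmless.
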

\noindent Note that Proposition \ref{prop:CLTdocCurves} does not stipulate any conditions on the summability of the tail of $\{ L_n \}$.

\begin{proof}

With $\psi$ fixed, $\gamma$ a fully crossing horizontal curve, let $K \in \N$, to be specified shortly, and let $\ell = K^{-1}$. 

Let $ I_1, \cdots,  I_K$ denote the partition of $[0,1)$ into $K$ intervals of length $\ell$ each. For $1 \leq i , j \leq K$, let $R_{i,j} = I_i \times I_j$. Note that with $\psi_{i,j} = \inf \{ \psi(p) : p \in R_{i,j}\}$, we have
\[
\| \psi - \sum_{1 \leq i, j \leq K} \psi_{i,j} \chi_{R_{i,j}} \|_{L^\infty} = O(\ell^\a \| \psi \|_\a) \, .
\] 
Thus
\[
(*) := \int_\gamma \psi \circ F_m^n d \Leb_\gamma = O(\ell^\a \| \psi\|_\a) + \sum_{1 \leq i,j \leq K} \psi_{i,j} \int_\gamma \chi_{R_{i,j}} \circ F_m^n \, d \Leb_\gamma \, .
\]
Form $\bar \Gamma_m^{n-1}(\gamma), \Bc_m^{n-1}(\gamma)$ as in Lemma \ref{lem:multMixingPrep}, so that for each $i,j$-summand, we have
\[
\int_\gamma \chi_{R_{i,j}} \circ F_m^n \, d \Leb_\gamma
=
O\bigg( \sum_{k = m}^{n-1} L_k^{-1 + \eta} \bigg) + \sum_{\bar \gamma \in \bar \Gamma_m^{n-1}(\gamma)} \int_{\bar \gamma} \chi_{R_{i,j}} \circ F_n \,  \frac{d \Leb_{\bar \gamma}}{\| dF^{n-1}_m\| \circ (F^{n-1}_m)^{-1} }  \, ,
\]
so that 
\[
(*) = \| \psi\|_\a \cdot O\bigg( \ell^\a + \sum_{k = m}^{n-1} L_k^{-1+ \eta} \bigg) + \sum_{1 \leq i,j \leq K} \psi_{i,j} 
\sum_{\bar \gamma \in \bar \Gamma_m^{n-1}(\gamma)} \int_{\bar \gamma} \frac{d \Leb_{\bar \gamma}}{\| dF^{n-1}_m\| \circ (F^{n-1}_m)^{-1} } \chi_{R_{i,j}} \circ F_n \, .
\]
By the distortion estimate in Lemma \ref{lem:multMixingPrep}(c), the $i,j,\bar\gamma$-summand equals
\[
(1 + O(L_m^{1-2\eta})) \cdot \Leb_\gamma((F_m^{n-1})^{-1} \bar \gamma) \underbrace{\int_{\bar \gamma} \chi_{R_{i,j}} \circ F_n \,  d \Leb_{\bar \gamma}}_{(**)} \, .
\]

To estimate $(**)$, observe that $\bar \gamma \cap F_n^{-1} R_{i,j} = \bar \gamma|_{j}$, where for a set $S \subset \T^2$ we write $S|_i = S \cap (I_i \times [0,1))$. Form now the collection $\bar \Gamma_n(\bar \gamma|_j)$ and the set $\Bc_n(\bar \gamma|_j)$. We obtain
\begin{align*}
(**) = \int_{\bar \gamma} \chi_{R_{i,j}} \circ F_n d \Leb_{\bar \gamma} &= O(\Bc_n(\bar \gamma|_{j})) + 
\sum_{\bar \gamma' \in \bar \Gamma_n(\bar \gamma|_j)} \int_{\bar \gamma'} \frac{d \Leb_{\bar \gamma'}}{\| d F_n|_{T \bar \gamma} \| \circ F_n^{-1}} \chi_{R_{i,j}} \\
&= O(L_n^{-1 + \eta}) +  (1 + O(L_n^{1 - 2 \eta})) \cdot  \sum_{\bar \gamma' \in \bar \Gamma_n(\bar \gamma|_j)} \Leb_{\bar \gamma'}(\bar \gamma'|_i) \cdot \Leb_{\bar \gamma}(F_n^{-1} \bar \gamma') \, 
\end{align*}
by the distortion estimate in Lemma \ref{lem:multMixingPrep}(c). Since $\| h_{\bar \gamma'}'\|_{C^0} \leq L_n^{- \eta}$ for each $\bar \gamma' \in \bar \Gamma_n(\bar \gamma|_j)$, we easily estimate $\Leb_{\bar \gamma'}(\bar \gamma'|_i) = (1 + O(L_n^{- \eta})) \ell$, so that
\begin{align*}
(**) &= O(L_n^{-1 + \eta}) + (1 + O(L_n^{1 - 2 \eta})) (1 + O(L_n^{- \eta})) \cdot \ell \cdot \Leb_{\bar \gamma}(\bar \gamma|_j \setminus \Bc_n(\bar \gamma|_j)) \\
& = O(L_n^{-1 + \eta}) + (1 + O(L_n^{1 - 2 \eta})) \cdot \ell \cdot \Leb_{\bar \gamma}(\bar \gamma|_j \setminus \Bc_n(\bar \gamma|_j)) \, .
\end{align*}

Now, $\Leb_{\bar \gamma} (\Bc_n(\bar \gamma|_j)) = O(L_n^{-1 + \eta})$, so 
\begin{align*}
\Leb_{\bar \gamma}(\bar \gamma|_j \setminus \Bc_n(\bar \gamma|_j)) &= \Leb_{\bar \gamma}(\bar \gamma|_j) + O(L_n^{-1 + \eta}) = (1 + O(L_{n-1}^{- \eta})) \ell + O(L_n^{-1 + \eta})  \\
& = \big(1 + O(L_{n-1}^{-\eta} + \ell^{-1} L_n^{-1 + \eta}) \big) \, \ell
\end{align*}
having used the estimate $\| h_{\bar \gamma}' \|_{C^0} \leq L_{n-1}^{- \eta}$. Consolidating our estimates,
\begin{align*}
(**) &= O(L_n^{-1 + \eta}) +  (1 + O(L_n^{1 - 2 \eta})) \cdot \big(1 + O(L_{n-1}^{-\eta} + \ell^{-1} L_n^{-1 + \eta}) \big) \cdot \ell^2 \\
& = (1 + O(L_n^{1 - 2 \eta} + L_{n-1}^{- \eta} + \ell^{-2} L_n^{-1 + \eta})) \ell^2 \, .
\end{align*}

This establishes the constraint $\ell^{-2} L_n^{-1 + \eta} \ll 1$. Plugging the above estimate back into the expression for $(*)$ and using this constraint gives
\begin{align*}
(*) &= \| \psi\|_\a \cdot O\bigg( \ell^\a + \sum_{k = m}^{n-1} L_k^{-1+ \eta} \bigg) \\
& + (1 + O(L_m^{1 - 2\eta} + L_{n-1}^{- \eta} + \ell^{-2} L_n^{-1 + \eta})) \Leb_\gamma( \gamma \setminus \Bc_m^{n-1}(\gamma)) \cdot \sum_{1 \leq i,j \leq K} \psi_{i,j} \ell^2 \\
&= \| \psi\|_\a \cdot O\bigg( \ell^\a + \sum_{k = m}^{n-1} L_k^{-1+ \eta} \bigg) \\
& + (1 + O(L_m^{1 - 2\eta}  + \ell^{-2} L_n^{-1 + \eta})) \big(\Len(\gamma) + O \bigg( \sum_{k = m}^{n-1} L_k^{-1 + \eta} \bigg) \big) \cdot \int \psi  \\
&= \| \psi\|_\a \cdot O\bigg( \ell^\a + \sum_{k = m}^{n-1} L_k^{-1+ \eta} \bigg) + (1 + O(L_m^{1 - 2\eta} + \ell^{-2} L_n^{-1 + \eta})) \Len(\gamma) \cdot \int \psi \\
& = \Len (\gamma) \cdot \int \psi + \| \psi \|_\a \cdot O \bigg(\ell^\a + \sum_{k = m}^{n-1} L_k^{-1+ \eta} + L_m^{1 - 2 \eta} + \ell^{-2} L_n^{-1 + \eta} \bigg) \, .
\end{align*}

On setting $K = \ell^{-1} = \lceil L_{n}^{ (1 - \eta) / (\a + 2)} \rceil$, the proof is complete.
\end{proof}

%
%

\section{Singular limit of $\{ F_n \}$; finite time mixing estimates}\label{sec:singLimit}

Although the compositions $\{ F^n \}$ are \emph{nonautonomous} or `nonstationary' by design, we argue in this section that the individual maps $F_n$ do converge, in a sense to be made precise, to some \emph{stationary} process. This we formulate in a precise way in \S \ref{subsec:singularLimit}. As we argue below, these considerations naturally follow from finite-time mixing properties of the partial compositions $F_m^n$ for $m, n$ very large, $m \leq n$; we state and prove these mixing estimates in \S \ref{subsec:finiteTimeMixEst}, verifying the convergence mode described in \S \ref{subsec:singularLimit}.

As they are of independent interest, these finite-time mixing estimates are re-formulated for the standard maps $F_L, L > 0$ as Theorem \ref{thm:finiteDoC}. 

\subsection{Singular limit of $\{ F_n \}$}\label{subsec:singularLimit}

As $n$ increases, the maps $F_n (x,y)= (f_n (x) - y \modone, x)$ become more and more singular due to the fact that $L_n \to \infty$; in particular, $\lim_{n\to\infty} F_n$ does not exist in any meaningful topology on diffeomorphisms of $\T^2$. To motivate a meaningful convergence notion, let us consider the action in the $x$ coordinate given by the map $f_n : \T^1 \to \T^1$. 

Observe that for $n$ extremely large, $f_n: \T^1 \to \T^1$ is predominantly an expanding map, and so in one time iterate the value of $f_n (x), x \in \T^1$ is increasingly sensitive to $x \in \T^1$. Cast in a different light, $f_n$ is increasingly `randomizing' on $\T^1$, to the point where $x$ and $f_n(x)$ are increasingly decorrelated as $n \to \infty$. One might expect, then, that in the limit, $f_n (x)$ can be modeled by a random variable \emph{independent of} $x$. A step towards a precise formulation might be as follows: for some class of continuous observables $\phi, \psi : \T^1 \to \R$, we should expect that
\[
\lim_{n \to \infty} \int_{\T^1} \phi \circ f_n(x) \cdot \psi(x) = \int \phi \int \psi \, .
\]
Morally speaking, we expect that when $X$ is a random variable distributed in a `nice' way on $\T^1$, we have that the joint law of the pair $(X, f_n(X))$ converges, in some to-be-determined sense, to the joint law of a pair $(X,Z)$ for which $Z$ is independent of $X$.

\medskip

%
%

Let us now return to the implications for the full maps $F_n : \T^2 \to \T^2$ and make things more precise. The above discussion motivates modeling $F_n$ for $n$ large by a \emph{Markov chain} $\{ Z_n = (X_n, Y_n) \}$ defined as follows.
Let $\b_1, \b_2, \cdots$ be IID random variables uniformly distributed on $\T^1$. Given an initial condition $Z_0 = (X_0, Y_0) \in \T^2$, we iteratively define
\[
Z_{n+1} = (X_{n+1}, Y_{n+1}) = ( \beta_{n+1}, X_n) \, . 
\]
for $n \geq 0$. The form of this Markov chain agrees with the idea, argued above, that $X, f_n(X)$ are ``asymptotically independent'' in the sense described above.

Let $P$ denote the transition operator associated with $Z_n$, so that 
\[
P((x,y), A \times B) = \Leb(A) \cdot \delta_x(B)
\]
for Borel $A, B \subset \T^2$, where $\delta_x$ denotes the Dirac mass at $x$. Write $P^k$ for the $k$-th iterate of $P$. For $\phi : \T^2 \to \R, k \geq 1$, we define $P^k \phi : \T^2 \to \R$ by $P^k \phi(x,y) = \int P^k((x,y), d \bar x d \bar y) \, \phi(\bar x, \bar y)$.

\begin{prop}\label{prop:singLimit}
Fix $k \geq 1$ and let $\phi, \psi : \T^2 \to \R$ be continuous. Assume $L_m \to \infty$ as $m \to \infty$. Then,
\[
\lim_{m \to \infty} \int \psi \circ F^{m+k-1}_m \cdot \phi = \int P^k \psi \cdot \phi \, .
\]
\end{prop}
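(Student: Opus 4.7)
\emph{Proof proposal.} The plan is to reduce to Lipschitz $\phi,\psi$ via density, then treat $k = 1$ and $k \geq 2$ separately, since a direct induction shows $P^k\psi \equiv \int \psi$ for $k \geq 2$ while $P\psi(x,y) = \int \psi(\bar x, x)\, d\bar x$ is non-constant in $x$. The density reduction is immediate: the map $(\phi,\psi) \mapsto \int \psi\circ F_m^{m+k-1}\phi$ is continuous under $\|\cdot\|_{C^0}$ by $|\int \psi\circ F\cdot\phi| \leq \|\phi\|_\infty\|\psi\|_\infty$ (volume preservation), and $\int P^k\psi\cdot\phi$ similarly via $\|P^k\psi\|_\infty \leq \|\psi\|_\infty$.

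\emph{Case $k \geq 2$.} By Fubini, $\int \psi\circ F_m^{m+k-1}\phi = \int_y I_y(m)\,dy$ with $I_y(m) := \int_{\gamma_y} (\psi\circ F_m^{m+k-1})\cdot\phi|_{\gamma_y}\,d\Leb_{\gamma_y}$ along the fully crossing horizontal curve $\gamma_y := \T^1\times\{y\}$. Partition $\gamma_y$ into subintervals of length $\ell = L_m^{-\delta}$ for some $\delta \in (0, 1 - \eta)$, and approximate $\phi|_{\gamma_y}$ by a step function with error $O(\ell)$. Each short piece $J_i \times \{y\}$ is then processed by Lemma \ref{lem:multMixingPrep} applied to the single map $F_m$: outside a bad set of mass $O(L_m^{-1+\eta})$, its $F_m$-image is a union of fully crossing horizontal curves at time $m+1$, to each of which Proposition \ref{prop:CLTdocCurves} applies across the remaining $k - 1 \geq 1$ iterations with rate tending to zero as $m \to \infty$. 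Combined with the distortion control of Lemma \ref{lem:multMixingPrep}(c), this yields $\int_{J_i\times\{y\}} \psi\circ F_m^{m+k-1}\,d\Leb = \ell\int\psi + o(1)$; summing and integrating over $y$ gives $\int \psi\circ F_m^{m+k-1}\phi \to \int\phi\int\psi = \int P^k\psi\cdot\phi$.

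\emph{Case $k = 1$.} Let $\bar\phi(x) := \int \phi(x,y')\,dy'$, regarded as a function on $\T^2$. A Fubini computation using that $y \mapsto f_m(x) - y \modone$ is a measure-preserving bijection of $\T^1$ for each fixed $x$ gives the identity $\int P\psi\cdot\phi = \int \psi\circ F_m\cdot\bar\phi$; subtracting,
\[
\int \psi\circ F_m\cdot\phi - \int P\psi\cdot\phi = \int \psi\circ F_m\cdot H, \quad H := \phi - \bar\phi\,,
\]
where $\int H(x,y')\,dy' = 0$ for every $x$. Expand $H(x,y) = \sum_{n\neq 0}\hat H(x,n)e^{2\pi iny}$ in Fourier in $y$; substituting $\bar x = f_m(x) - y$ in the $y$-integral reduces the $n$-th mode to $\int T_n(x)\,e^{2\pi inf_m(x)}\,dx$ with $T_n(x) := \hat H(x,n)\int \psi(\bar x,x)e^{-2\pi in\bar x}\,d\bar x$. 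This equals $\int_{\T^1\times\{0\}}\Psi_n\circ F_m\,d\Leb$ for the Holder observable $\Psi_n(u,v) := T_n(v)e^{2\pi inu}$, which satisfies $\int \Psi_n = 0$ since $\int e^{2\pi inu}\,du = 0$ for $n \neq 0$; so Proposition \ref{prop:CLTdocCurves} forces $\int T_n\,e^{2\pi inf_m}\,dx \to 0$ as $m\to\infty$. Lipschitz regularity of $\phi,\psi$ yields $\|T_n\|_\infty = O(1/n^2)$, and dominated convergence passes the limit through the sum over $n$.

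\emph{Main obstacle.} In the $k \geq 2$ case one must balance three competing errors: the step-function approximation $O(\ell)$, the total bad-set mass $O(L_m^{-1+\eta}/\ell)$ aggregated over $O(1/\ell)$ subintervals, and the Proposition \ref{prop:CLTdocCurves} rate on each fully crossing piece; the constraint $\delta \in (0, 1-\eta)$ accommodates all three simultaneously. The $k = 1$ case is conceptually independent because $P\psi$ is non-constant, and the Fourier-in-$y$ decomposition is essential to isolate the single-frequency cancellation that Proposition \ref{prop:CLTdocCurves} provides.
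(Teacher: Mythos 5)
Your proof is correct but takes a longer, partly different route from the paper's. The paper obtains Proposition~\ref{prop:singLimit} as an immediate corollary of Proposition~\ref{prop:tailDoC}: after the $C^0$-density reduction to Lipschitz observables, Proposition~\ref{prop:tailDoC}(a) handles $k=1$ (noting $\int\phi_1(x,z)\phi_2(z,y)\,dx\,dy\,dz=\int P\phi_1\cdot\phi_2$) and Proposition~\ref{prop:tailDoC}(b) handles $k\geq 2$ (noting $P^k\psi\equiv\int\psi$). Your $k\geq 2$ argument essentially reconstructs the proof of Proposition~\ref{prop:tailDoC}(b) from Lemma~\ref{lem:multMixingPrep} and Proposition~\ref{prop:CLTdocCurves}, so that part is correct but duplicates work already packaged in the paper. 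Your $k=1$ argument is genuinely different and is the interesting part: the exact identity $\int P\psi\cdot\phi=\int\psi\circ F_m\cdot\bar\phi$ (valid for every $m$, via the measure-preserving bijection $y\mapsto f_m(x)-y$ of each vertical fiber) reduces the claim to showing $\int\psi\circ F_m\cdot H\to 0$ where $H$ has zero vertical mean, and the Fourier-in-$y$ decomposition turns this into an application of Proposition~\ref{prop:CLTdocCurves} to zero-mean Lipschitz observables $\Psi_n$ on a single fully-crossing circle. One subtlety you handle correctly: $\|\Psi_n\|_{\Lip}=O(1/n)$ is not summable, so one cannot sum the Proposition~\ref{prop:CLTdocCurves} rates over $n$; instead the summable dominating bound $|\int T_n e^{2\pi i n f_m}|\leq\|T_n\|_\infty=O(1/n^2)$ combined with pointwise-in-$n$ decay lets dominated convergence finish. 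The trade-off is that the Fourier route gives no explicit rate, whereas Proposition~\ref{prop:tailDoC}(a) does (and that rate is what feeds into Theorems~\ref{thm:strongLaw}--\ref{thm:finiteDoC}); but for Proposition~\ref{prop:singLimit} as stated, no rate is needed and your argument suffices.
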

\noindent That is, the maps $F_n$ converge to the Markov chain $(Z_n)_n$ in the sense that the associated Koopman operators converge to the transition operator $P$ for Holder observables in a way reminiscent of the weak operator topology. Proposition \ref{prop:singLimit} is proved in \S \ref{subsec:finiteTimeMixEst} below.

\begin{rmk}\label{rmk:singLimitVariance}
The convergence described in Proposition \ref{prop:singLimit} suggests that the asymptotic variance of sums $\frac{1}{\sqrt N} \sum_{i = 0}^{N-1} \phi \circ F^i$ as in the Central Limit Theorem (Theorem \ref{thm:CLT}) should coincide with the asymptotic variance $\hat \sigma^2(\phi)$ of $\frac{1}{\sqrt{N}} \sum_{i = 0}^{N-1} \phi(Z_i), Z_0 \sim \Leb_{\T^2}$. Developing the Green-Kubo formula for $\hat \sigma^2(\phi)$, we obtain
\begin{align*}
\hat \sigma^2(\phi) & = \E \big( \phi(Z_0)^2 \big) + 2 \sum_{l = 1}^\infty \E \big( \phi(Z_0) \, \phi(Z_l) \big) \\
& = \E \big( \phi(Z_0)^2 \big) + 2 \E \big( \phi(Z_0) \phi(Z_1) \big) \\
& = \int \phi^2 + 2 \int \phi(x,y) \phi(y,z) \, d x dy dz \, ,
\end{align*}
where we have used the fact that $Z_k, Z_0$ are independent when $k \geq 2$. This is precisely the form of $\sigma^2$ given in Theorem \ref{thm:CLT}. Here, $\E$ refers to the expectation where $Z_0 \sim \Leb_{\T^2}$.
%
%
\end{rmk}

This perspective also explains the `coboundary condition' $\phi(x,y) = \psi(x) - \psi(y)$ for some bounded $\psi : \T^1 \to \R$. If $\phi$ has this form, then the sums in the CLT for this Markov chain telescope: $\phi(Z_0) + \phi(Z_1) + \cdots + \phi(Z_{n-1}) = - \psi(Y_0) + \psi(X_n)$, and so the asymptotic variance is zero. Let us now check that this is also a necessary  condition for the asymptotic variance $\hat \sigma^2(\phi)$ to be zero.

\begin{lem}\label{lem:coboundaryCondition}
Let $\phi : \T^2 \to \R$ be a Holder continuous function with $\int \phi \, d x dy = 0$. Then, $\hat \sigma^2(\phi) = 0$ iff $\phi(x,y) = \psi(x) - \psi(y)$, where $\psi : \T^1 \to \R$ is some Holder continuous function.
\end{lem}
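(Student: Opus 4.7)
The plan is to reduce the whole question to Fourier analysis on $\T^2$ using the Green--Kubo formula for $\hat\sigma^2(\phi)$ derived in Remark \ref{rmk:singLimitVariance}, namely
\[
\hat\sigma^2(\phi) \;=\; \int \phi^2 \;+\; 2\int \phi(x,y)\phi(y,z)\, dx\, dy\, dz \, .
\]

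The ``$\Leftarrow$'' direction is essentially the observation already made in the discussion preceding the Lemma. If $\phi(x,y) = \psi(x) - \psi(y)$, then since the Markov chain satisfies $Z_i = (X_i, X_{i-1})$ for $i \geq 1$, the ergodic sums $\sum_{i=0}^{n-1}\phi(Z_i)$ telescope to $\psi(X_{n-1}) - \psi(Y_0)$, uniformly bounded in $n$, so their variance is $O(1)$ and the rescaled variance $\hat\sigma^2(\phi) = 0$. (Alternatively, one can plug the coboundary form into the Green--Kubo expression and check cancellation by direct integration.)

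For the more substantive ``$\Rightarrow$'' direction I would write $\phi(x,y) = \sum_{(k,l)\ne(0,0)} a_{k,l}\, e^{2\pi i(kx+ly)}$ with $a_{-k,-l} = \overline{a_{k,l}}$, and compute both pieces of $\hat\sigma^2$ explicitly. Parseval gives $\int \phi^2 = \sum |a_{k,l}|^2$, while integrating $\phi(x,y)\phi(y,z)$ against $dx\, dz$ kills every mode except those with zero first frequency in the first factor and zero last frequency in the second, and the remaining $y$-integration forces the middle frequencies to cancel; the result is $\sum_l a_{0,l}\,a_{-l,0}$. Applying the reality condition $a_{-l,0} = \overline{a_{l,0}}$ and symmetrizing the sum under $l \mapsto -l$ (which exploits $a_{0,-l} = \overline{a_{0,l}}$) rearranges the Green--Kubo formula into a pure sum of squares,
\[
\hat\sigma^2(\phi) \;=\; \sum_{k\ne0,\, l\ne 0} |a_{k,l}|^2 \;+\; \sum_{l\ne 0} |a_{l,0} + a_{0,l}|^2\,.
\]
The condition $\hat\sigma^2(\phi) = 0$ therefore forces $a_{k,l} = 0$ whenever both $k,l\ne 0$, and $a_{0,l} = -a_{l,0}$ for every $l\ne 0$.

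To produce $\psi$, I would set $\psi(x) := \int_0^1 \phi(x,y)\, dy$, which is $\a$-H\"older with $[\psi]_\a \leq [\phi]_\a$ (as a Lebesgue average of H\"older translates) and whose Fourier coefficients are $\hat\psi(k) = a_{k,0}$. Computing the Fourier series of $\psi(x) - \psi(y)$ and invoking the two conditions derived above shows that it agrees term by term with that of $\phi$; since both sides are continuous, they coincide pointwise. The main (modest) obstacle is the symmetrization step that turns $2\,a_{0,l}\overline{a_{l,0}}$ into $|a_{l,0}+a_{0,l}|^2$ inside the sum: a naive completion of the square produces a different expression (e.g.\ $|a_{l,0}+\overline{a_{0,l}}|^2$) that would not force the conjugation-consistent identity $a_{0,l}=-a_{l,0}$ needed to recover a real-valued $\psi$. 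Once one commits to pairing the $l$ and $-l$ terms before completing the square, everything else is bookkeeping.
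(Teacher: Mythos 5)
Your proof is correct, but it is a genuinely different argument from the one in the paper. The paper's proof is a one-line identity: it asserts (and leaves the verification to the reader) that
\[
\hat\sigma^2(\phi) \;=\; \int \Bigl( \phi(x,y) + \textstyle\int \phi(z,x)\,dz - \int\phi(w,y)\,dw \Bigr)^2\,dx\,dy\,,
\]
from which $\hat\sigma^2(\phi)=0$ forces the continuous integrand to vanish pointwise, giving $\phi(x,y)=\psi(x)-\psi(y)$ with $\psi(x)=-\int\phi(z,x)\,dz$. You instead diagonalize by Fourier series, compute $\int\phi(x,y)\phi(y,z)\,dx\,dy\,dz = \sum_l a_{0,l}a_{-l,0}$, and exhibit $\hat\sigma^2$ as the nonnegative sum $\sum_{k,l\neq 0}|a_{k,l}|^2 + \sum_{l\neq 0}|a_{l,0}+a_{0,l}|^2$; the vanishing of each square then reads off the coboundary structure and pins down $\psi$ (your $\psi(x)=\int\phi(x,y)\,dy$ agrees with the paper's choice once $a_{0,l}=-a_{l,0}$ holds, since $-\sum_l a_{0,l}e^{2\pi i l x}=\sum_l a_{l,0}e^{2\pi i l x}$). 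I verified the symmetrization step you flag as the subtle point: $\sum_{l\neq 0}a_{0,l}a_{-l,0}$ is real (it is invariant under $l\mapsto -l$ together with conjugation), so it equals $\sum_{l\neq 0}\operatorname{Re}(a_{l,0}\overline{a_{0,l}})$, and completing the square correctly gives $|a_{l,0}+a_{0,l}|^2$ rather than $|a_{l,0}+\overline{a_{0,l}}|^2$. What each route buys: the paper's perfect-square identity is slick and conceptually transparent about the variational structure, but the identity itself is not obvious and must be expanded and cancelled by hand (the cross term $\int(\int\phi(z,x)dz)(\int\phi(w,y)dw)\,dx\,dy$ dies using $\int\phi=0$, and the squared-average terms cancel the extra cross term); your Fourier computation makes the sum-of-squares structure explicit from the start and, as a bonus, automatically reveals \emph{why} the answer has exactly this form. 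Both prove the $\Rightarrow$ direction and both (like the paper) defer $\Leftarrow$ to the telescoping discussion preceding the lemma.
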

\begin{proof}
We have the identity
\[
\hat \sigma^2(\phi) = \int \bigg( \phi(x,y) + \int \phi(z, x) dz - \int \phi (w, y) dw \bigg)^2 \, dx dy \, ,
\]
the verification of which is an elementary (albeit tedious) computation left to the reader. Now, $\hat \sigma^2(\phi) = 0$ implies $\phi(x,y) = \psi(x) - \psi(y)$ pointwise (since $\phi$ is continuous), where $\psi(x) : = - \int \phi(z, x) dx$.
\end{proof}

\subsection{Finite-time mixing estimates}\label{subsec:finiteTimeMixEst}

The limiting notion described in Proposition \ref{prop:singLimit} is at its core the statement that finite compositions $F_m^n, m \leq n$ are  `mixing' in the limit $m, n \to \infty$. We will, in fact, prove something much stronger: a concrete estimate on the correlation of $(x,y)$ to $F_m^n(x,y)$ for $m, n$ large.

\begin{prop}\label{prop:tailDoC}
Fix $\eta \in (1/2, 1)$ and $\a \in (0,1]$. Let $L_0$ be sufficiently large, depending on $\a, \eta$. Let $m \geq 1$ and let $\phi_1, \phi_2 : \T^2 \to \R$ be $\a$-Holder continuous functions. Then, there exists a constant $C > 0$, depending only on $K_0, K_1, M_0$, such that the following hold.
\begin{itemize}
\item[(a)] We have
\[
\bigg| \int \phi_1 \circ F_m \cdot \phi_2 - \int \phi_1(x,z) \phi_2(z,y) dx dy dz \bigg| \leq  C \| \phi_1 \|_\a \| \phi_2 \|_\a L_m^{- \min\{ 2\eta - 1, \frac{\a(1 - \eta)}{2 + \a} \} } 
\]
\item[(b)] Let $n > m$. Then,
\[
\bigg| \int \phi_1 \circ F^n_m \cdot \phi_2  \, - \int \phi_1 \int \phi_2 \bigg| \leq { C \| \phi_1 \|_\a \| \phi_2\|_\a \bigg( L_m^{- \min\{\a (1-\eta) / (2 + \a), 2 \eta - 1\}} + \sum_{k = m+1}^{n-1} L_k^{-1+ \eta} \bigg)} \, .
\]
\end{itemize}
\end{prop}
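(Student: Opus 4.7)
The natural approach is to disintegrate $\int \phi_1 \circ F_m^n \cdot \phi_2 \, d\Leb_{\T^2}$ along horizontal fibers $\gamma_y := \T^1 \times \{y\}$ and apply a weighted analog of Proposition~\ref{prop:CLTdocCurves} on each fiber. Set
\[
\tilde\phi_1(x) := \int_{\T^1} \phi_1(u, x) \, du, \qquad \tilde\phi_2(x) := \int_{\T^1} \phi_2(x, y) \, dy.
\]
The key observation distinguishing (a) from (b) is that $F_m$ sends the vertical line $\{x\}\times\T^1$ \emph{exactly} onto the horizontal line $\T^1 \times \{x\}$. Thus for $n = m$ a single iterate only randomizes the first coordinate, producing a ``one-step'' target of $\int \tilde\phi_1(x) \tilde\phi_2(x) \, dx$, which one verifies by Fubini equals $\int \phi_1(x,z) \phi_2(z,y) \, dx dy dz$. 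For $n > m$, in contrast, the composition $F_m^n$ can produce full horizontal-curve mixing, giving the target $\int \phi_1 \int \phi_2$ exactly as in Proposition~\ref{prop:CLTdocCurves}.

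The main technical step is a weighted analog of Proposition~\ref{prop:CLTdocCurves}: for any fully crossing horizontal curve $\gamma$, any $\a$-Holder $W : \T^2 \to \R$, and any $1 \leq m \leq n$,
\[
\int_\gamma (\psi \circ F_m^n) \, W \, d\Leb_\gamma \;=\; \Mc_{m,n}(\psi, W) \;+\; C \|\psi\|_\a \|W\|_\a \cdot \operatorname{err}_{m,n},
\]
where $\operatorname{err}_{m,n}$ is the Proposition~\ref{prop:CLTdocCurves} error (essentially $L_m^{1-2\eta} + L_n^{-\a(1-\eta)/(\a+2)} + \sum_{k=m}^{n-1} L_k^{-1+\eta}$), and
\[
\Mc_{m,n}(\psi, W) = \begin{cases} \int_\gamma \tilde\psi \cdot W \, d\Leb_\gamma & \text{if } n = m, \\[1mm] \bigl(\int_\gamma W \, d\Leb_\gamma\bigr) \cdot \int \psi & \text{if } n > m. \end{cases}
\]
I would prove this by mirroring the proof of Proposition~\ref{prop:CLTdocCurves}: partition $\T^2$ into $\ell \times \ell$ squares $R_{i,j}$, approximate $\psi$ by its infimum on each square, and simultaneously approximate $W$ on $\gamma$ by a step function at scale $\ell$ (cost $O(\ell^\a(\|\psi\|_\a + \|W\|_\a))$); decompose the iterates of $\gamma$ via Lemma~\ref{lem:multMixingPrep} into fully crossing horizontal pieces plus a bad set (cost $\sum_k L_k^{-1+\eta}$); and invoke the distortion bound of Lemma~\ref{lem:distControlHorizontalCurves} (cost $L_m^{1-2\eta}$). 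When $n=m$, only a single iterate is available, so the ``mixing'' is purely in the first coordinate and produces the $\tilde\psi$-averaged main term; when $n > m$, one final iterate of $F_n$ on each intermediate fully crossing piece upgrades this to the fully randomized $\int\psi$ as in the existing proof.

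Given the weighted estimate, both parts follow directly. For (a), apply it with $n=m$ on each fiber $\gamma_y$ taking $\psi = \phi_1$ and $W = \phi_2$ (the Holder norm of $W$ restricted to $\gamma_y$ is bounded by $\|\phi_2\|_\a$ uniformly in $y$); integrating over $y$ yields the main term $\int_y \int_x \tilde\phi_1(x) \phi_2(x,y)\,dx\,dy = \int \tilde\phi_1 \tilde\phi_2$ and the stated error. For (b), apply the $n>m$ version on each fiber to obtain $\bigl(\int_x \phi_2(x,y)\,dx\bigr) \cdot \int \phi_1$, whose $y$-integral is $\int\phi_1\int\phi_2$, with error matching the stated bound after absorbing $L_n^{-\a(1-\eta)/(\a+2)} \leq L_m^{-\a(1-\eta)/(\a+2)}$.

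The main obstacle is the bookkeeping in the weighted analog of Proposition~\ref{prop:CLTdocCurves}: the weight $W$ must be carried along the $\ell$-scale partition and through the iterated horizontal-curve decomposition of Lemma~\ref{lem:multMixingPrep} so that $\|W\|_\a$ appears only as a multiplicative factor in the final error, without degrading the $L_m$-exponent. A secondary subtlety, specific to part (a), is identifying the correct one-step main term as $\int\tilde\phi_1\tilde\phi_2$ (the Markov-chain one-step prediction from Section~\ref{subsec:singularLimit}) rather than the fully mixed $\int\phi_1\int\phi_2$, so that the target matches the statement of (a).
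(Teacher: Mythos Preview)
Your approach is correct and is essentially the same as the paper's, just packaged differently: the paper discretizes both $\phi_1$ and $\phi_2$ into step functions on $\ell\times\ell$ squares up front and then computes $\Leb(R_{i_1 i_0}\cap F_m^{-1}R_{i_2 i_1})$ directly via Lemma~\ref{lem:multMixingPrep} on short horizontal segments (the reduction to a triple sum over $i_0,i_1,i_2$ uses exactly your observation that $F_m(\{x\}\times\T^1)=\T^1\times\{x\}$), whereas you first disintegrate along full fibers $\gamma_y$ and encapsulate the same discretization-plus-Lemma~\ref{lem:multMixingPrep} computation as a ``weighted'' version of Proposition~\ref{prop:CLTdocCurves}. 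The resulting error terms and the optimal choice of $\ell$ coincide.
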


\noindent Observe that Proposition \ref{prop:singLimit} follows easily from Proposition \ref{prop:tailDoC}. Moreover, as we leave to the reader to check, the proof of Proposition \ref{prop:tailDoC} requires only that the sequence $\{L_n\}$ be \emph{nondecreasing}, and so applies equally well in the case when $L_m = L_{m+1} =\cdots = L_n = L$ for some fixed $L > 0$. Thus Theorem \ref{thm:finiteDoC} follows.

Items (a) and (b) are proved separately in \S \ref{subsubsec:proofTailDOCa}, \S \ref{subsubsec:proofTailDOCb} below, respectively.

\subsubsection{Proof of Proposition \ref{prop:tailDoC}(a)}\label{subsubsec:proofTailDOCa}

Throughout \S \ref{subsubsec:proofTailDOCa} and \S \ref{subsubsec:proofTailDOCb}, we let $I_i, R_{i,j}$ be as in the proof of Proposition \ref{prop:CLTdocCurves}, where $\ell = K^{-1}$ and $K \in \N$ will be specified at the end (twice, once for part (a) and again for part (b)). 

With $\a \in (0,1]$ and $\phi_1, \phi_2$ fixed, for $l = 1,2$ we define $\phi^l_{i,j} = \inf_{R_{i,j}} \phi_l$, so that 
\[\| \phi_l - \sum_{i,j} \phi_{i,j}^l \chi_{R_{i,j}} \|_{L^{\infty}} = O(\| \phi \|_\a \ell^\a) \, . \] 

To begin, we estimate
\begin{gather*}
\int \phi_1 \circ F_m \cdot \phi_2 = O(\| \phi_1 \|_\a \| \phi_2 \|_\a \ell^\a) + \sum_{1 \leq i, j, i', j' \leq K} \phi_{i,j}^1 \phi_{i',j'}^2 \int \chi_{R_{i,j}} \circ F_m  \cdot \chi_{R_{i', j'}}  \\
= O(\| \phi_1 \|_\a \| \phi_2 \|_\a \ell^\a) + \sum_{1 \leq i_0, i_1, i_2 \leq K} \phi_{i_2 i_1}^1 \phi_{i_1 i_0}^2  \int  \chi_{R_{i_1 i_0}} \chi_{R_{i_2 i_1}} \circ F_m \, ,
\end{gather*}
where in passing from the first line to the second we have used that $F_m(R_{i,j}) \subset [0,1) \times I_i$. 

Fixing $i_0, i_1, i_2$, let $y_0 \in I_{i_0}$ and set $H = I_{i_1} \times \{ y_0 \}$. Applying Lemma \ref{lem:multMixingPrep},
\begin{gather*}
(*) = \int_{H} \chi_{R_{i_2 i_1}} \circ F_m d \Leb_{H} = O\big(\Leb_{H} (\Bc_m(H)) \big) + \sum_{\bar \gamma \in \bar \Gamma_m(H)} \int_{\bar \gamma} \frac{d \Leb_{\bar \gamma}}{\| d F_m|_{T H} \| \circ F_m^{-1} } \, \chi_{R_{i_2 i_1}} d \Leb_{\bar \gamma} \\
= O(L_m^{-1 + \eta}) + \sum_{\bar \gamma \in \bar \Gamma_m(H)} (1 + O(L_m^{1 - 2 \eta}) )\Leb_{H} (F_m^{-1} \bar \gamma) \cdot \int_{\bar \gamma} \chi_{I_{i_2} \times [0,1)} \, d \Leb_{\bar \gamma} \, ,
\end{gather*}
having used again that $F_m(I_i \times [0,1)) \subset [0,1) \times I_i$ to develop the integrand on the far right. Estimating $\Leb_{\bar \gamma}(\bar \gamma \cap I_{i_2} \times [0,1)) = (1 + O(L_m^{- \eta})) \ell$ (having used that $\| h_{\bar \gamma}' \|_{C^0} = O(L_m^{- \eta})$), we obtain
\begin{gather*}
(*) = O(L_m^{-1 + \eta}) + (1 + O(L_m^{1 - 2 \eta })) (1 + O(L_m^{- \eta})) \Leb_H(H \setminus \Bc_m(H)) \cdot \ell \\
 = O(L_m^{-1 + \eta}) + (1 + O(L_m^{1- 2  \eta })) (1 + O(L_m^{- \eta})) (\ell + O(L_m^{-1 + \eta})) \cdot \ell \\
 =  \ell^2 \big(1 + O (L_m^{1 - 2 \eta} + \ell^{-2} L_m^{-1 + \eta} \big) \, .
\end{gather*}
Integrating over $y_0 \in I_{i_0}$, we conclude 
\[
\Leb(R_{i_0 i_1} \cap F_m^{-1} R_{i_2 i_1}) = \ell^3 (1 + O(\ell^{-2} L_m^{-1 + \eta} + L_m^{1-2\eta})) \, .
\]

Summing now over $1 \leq i_0, i_1, i_2 \leq K$ gives
\begin{gather*}
\int \phi_1 \circ F_m \cdot \phi_2  = O( \| \phi_1 \|_\a \| \phi_2 \|_\a (\ell^\a + \ell^{-2} L_m^{-1 +  \eta} + L_m^{1-2 \eta})) + \sum_{1 \leq i_0, i_1, i_2 \leq K}  \phi_{i_2 i_1}^1\phi_{i_1 i_0}^2 \ell^3 \\
= O( \| \phi_1 \|_\a \| \phi_2 \|_\a (\ell^\a + \ell^{-2} L_m^{- 1 + \eta} + L_m^{1-2 \eta})) + \int \phi_1(x,z) \phi_2(z,y) dx dy dz \, .
\end{gather*}
The proof is complete on setting $K = \ell^{-1} = \bigg\lceil L_m^{ \frac{1-\eta}{2 + \a}} \bigg\rceil$.

\subsubsection{Proof of Proposition \ref{prop:tailDoC}(b)}\label{subsubsec:proofTailDOCb}

All notation is as in the beginning of \S \ref{subsubsec:proofTailDOCa}. We estimate
\begin{gather*}
(**) = \int \phi_1 \circ F_m^n \cdot  \phi_2  = O( \| \phi_1\|_\a \|\phi_2\|_\a \ell^\a) + \sum_{1 \leq i,j \leq K} \phi_{i,j}^2 \int_{R_{i,j}} \,  \phi_1 \circ F_m^n \, .
\end{gather*}

Fix $1 \leq i,j \leq K$. For $y_0 \in I_j$, write $H = H(y_0) = I_i \times \{y_0\}$. Then
\[
\int_{R_{i,j}}  \phi_1 \circ F_m^n = \int_{y \in I_j} \int_{H(y_0)} \phi_1 \circ F^n_m d \Leb_{H(y_0)} \, d y_0 \, .
\]
Developing the inner integral and applying Lemma \ref{lem:multMixingPrep},
\begin{gather*}
\int_{H(y_0)} \phi_2 \circ F^n_m d \Leb_{H(y_0)} = O(\| \phi_1\|_0 \Leb_{H(y_0)}\Bc_m(H(y_0))) + \sum_{\bar \gamma \in \bar \Gamma_m(\gamma(y_0))} \int_{\bar \gamma} \frac{d \Leb_{\bar \gamma}}{\| dF_m|_{T H(y_0)}\| \circ F_m^{-1}} \phi_1 \circ F_{m+1}^n \\
= O(\| \phi_1\|_{C^0} L_m^{-1 +  \eta}) +(1 + O(L_m^{1 - 2 \eta}))  \sum_{\bar \gamma \in \bar \Gamma_m(H(y_0))} \Leb_{H(y_0)}(F_m^{-1} \bar \gamma) \int_{\bar \gamma} \phi_1 \circ F_{m+1}^n d \Leb_{\bar \gamma} \, .
\end{gather*}

The curves $\bar \gamma$ cross the full horizontal extent of $\T^2$ and so fall under the purview of Proposition \ref{prop:CLTdocCurves}. Applying the estimate there, we obtain
\begin{gather*}
\int_{\bar \gamma} \phi_1 \circ F_{m+1}^n d \Leb_{\bar \gamma} 
=
\Len(\bar \gamma) \, \int \phi_1 + O(\| \phi_1\|_\a \bigg( L_{n}^{- \a (1 - \eta)/(2 + \a)} + L_{m+1}^{1-2 \eta} + \sum_{k = m+1}^{n-1} L_k^{-1 + \eta} \bigg)) \\
= \int \phi_1 + O(\| \phi_1\|_\a \bigg( L_n^{- \a(1- \eta)/(2 + \a)} +  L_{m+1}^{1-2 \eta} + \sum_{k = m+1}^{n-1} L_k^{- 1+\eta} \bigg)) \, .
\end{gather*}
Summing over $\bar \gamma$ we obtain that $\int_{H(y_0)} \phi_1 \circ F_m^n d \Leb_{H(y_0)} $ equals
\begin{gather*}
O(\| \phi_1\|_{C^0} L_m^{-1+ \eta}) + (1 + O(L_m^{1-2\eta}))  \sum_{\bar \gamma \in \bar \Gamma_m(H(y_0))} \Leb_{H(y_0)}(F_m^{-1} \bar \gamma) \,  \cdot \\
\bigg( \int \phi_1 + O(\| \phi_1\|_\a \bigg( L_n^{- \a (1-\eta)/(2 + \a)} +  L_{m+1}^{1-2 \eta} + \sum_{k = m+1}^{n-1} L_k^{-1+ \eta} \bigg))\bigg) \\
= O(\| \phi_1 \|_\a \bigg( L_m^{-1+ \eta} + \ell (L_n^{- \a (1- \eta) / (2 + \a)} + L_{m+1}^{1 - 2 \eta} + \sum_{k = m+1}^{n-1} L_k^{- 1+\eta}) \bigg) ) +\\
 (1 + O(L_m^{1 - 2 \eta})) \Leb_{H(y_0)}(H(y_0) \setminus \Bc_m(H(y_0))) \int \phi_1 \\
= O(\| \phi_1 \|_\a \bigg( L_m^{- 1+\eta} + \ell (L_n^{- \a (1-\eta) / (2 + \a)} + L_{m+1}^{1 - 2 \eta} + \sum_{k = m+1}^{n-1} L_k^{-1+ \eta}) \bigg) ) + \\(1 + O(L_m^{1 - 2 \eta})) (1 + O(\ell^{-1}  L_m^{-1+ \eta})) \cdot \ell \int \phi_1 \\
= \ell \cdot \bigg\{ O(\| \phi_1 \|_\a \bigg( \ell^{-1} L_m^{- 1+\eta} +  L_n^{- \a (1-\eta) / (2 + \a)} +  L_{m}^{1 - 2 \eta} + \sum_{k = m+1}^{n-1} L_k^{- 1+\eta} \bigg) ) +  \int \phi_1 \bigg\} \, .
\end{gather*}

Integrating over $y_0 \in I_j$ yields the same estimate for $\int \chi_{R_{i,j} } \phi_1 \circ F_m^n$ with an additional factor of $\ell$. Summing over $1 \leq i,j \leq K$, we have that $\int \phi_1 \circ F^n_m \cdot \phi_2$ equals
\begin{gather*}
 O(\| \phi_1 \|_\a \|\phi_2\|_\a \bigg( \ell^\a + \ell^{-1} L_m^{- 1+\eta} +  L_n^{- \a (1 - \eta) / (2 + \a)} + L_{m}^{1 - 2 \eta} + \sum_{k = m+1}^{n-1} L_k^{- 1+ \eta} \bigg) ) + \sum_{i,j = 1}^K \ell^2 \phi^2_{i,j} \int \phi_1 \\
= O(\| \phi_1 \|_\a \|\phi_2\|_\a \bigg( \ell^\a + \ell^{-1} L_m^{- 1+\eta} +  L_n^{- \a (1-\eta) / (2 + \a)} +  L_m^{1 - 2 \eta} + \sum_{k = m+1}^{n-1} L_k^{- 1+\eta} \bigg) ) + \int \phi_1 \int \phi_2  \, .
\end{gather*}

The proof is complete on setting $K = \lceil L_m^{(1-\eta)/(1 + \a)} \rceil$.

\section{Law of Large Numbers}\label{sec:SL}

We continue our study of the statistical properties of the composition $\{ F^n \}$ by proving Theorem \ref{thm:strongLaw}, a pair of formulations of a `law of large numbers' for time-averages of observables. 

\medskip

{\it  In this section, $\a \in (0,1]$ is fixed, as are a sequence of $\a$-Holder continuous observables $\phi_i : \T^2 \to \R, i \geq 0$ with $\int \phi_i = 0$ for all $i$ and $\sup_{i \geq 0} \| \phi_i \|_\a \leq C_0$ for a constant $C_0 >0$.}

\bigskip

For $0 \leq M \leq N$, we define
\begin{gather*}
\hat S_{M, N} = \phi_M \circ F^M + \cdots + \phi_{N} \circ F^N 
\end{gather*}
and set $\hat S_N = \hat S_{0, N}$. Noting the simple estimate
\[
|\hat S_N - \hat S_{M, N}| = \bigg| \sum_{i = 0}^{M-1} \phi_i \circ F^i \bigg| \leq C_0 M 
\]
holds pointwise on $\T^2$, it follows that to prove a strong law for $\hat S_N$, it suffices to prove a strong law for $\hat S_{M, N}$ where $M = M(N) = \lfloor \sqrt N \rfloor$. Similarly, a weak law for $\hat S_N$ follows from a weak law for $\hat S_{M,N}$. More precisely, to prove Theorem \ref{thm:strongLaw} it suffices to prove the following.

\begin{prop}\label{prop:strongLaw}
For $N \geq 1$ let $M = M(N) = \lfloor \sqrt N \rfloor$. 
\begin{itemize}
\item[(a)] If $N^2 L_{N}^{- \frac{\a}{3 \a + 4} }  \to 0$, then $\frac{1}{N - M} \hat S_{M, N}$ converges in $L^2$ to $0$.
\item[(b)] If $N^{4 + \e} L_{\lfloor \sqrt N \rfloor}^{-  \frac{\a}{3 \a + 4} } \to 0$ as $N \to \infty$ for some $\e > 0$, then $\frac{1}{N - M} \hat S_{M, N}$ converges almost surely to $0$.
\end{itemize}
\end{prop}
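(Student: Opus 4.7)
The plan is to estimate the second moment $\int \hat S_{M,N}^2 \, d\Leb$; part (a) then follows directly from Chebyshev, while part (b) follows from the same estimate via Chebyshev together with Borel--Cantelli along $N \in \mathbb{N}$. Expanding the square and using the Lebesgue-invariance of each $F^i$ together with the factorization $F^j = F_{i+1}^j \circ F^i$ rewrites each cross term as
\[
\int \phi_i \circ F^i \cdot \phi_j \circ F^j \, d\Leb = \int \phi_i \cdot \phi_j \circ F_{i+1}^j \, d\Leb \quad (M \le i < j \le N),
\]
while the diagonal contributes only $\sum_{i=M}^N \int \phi_i^2 = O(N-M)$ trivially.

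To each cross term with $j \ge i+2$ I apply Proposition~\ref{prop:tailDoC}(b), choosing $\eta \in (1/2,1)$ so as to balance the two competing exponents. Solving $\alpha(1-\eta)/(\alpha+2) = 2\eta - 1$ gives $\eta = (2\alpha+2)/(3\alpha+4)$, yielding $\min\{\alpha(1-\eta)/(\alpha+2),\, 2\eta-1\} = \alpha/(3\alpha+4)$ and $1-\eta = (\alpha+2)/(3\alpha+4)$; this is precisely the rate appearing in the hypothesis. Because $\int \phi_i \, d\Leb = 0$, the ``independent'' main term $\int \phi_i \int \phi_j$ vanishes, leaving
\[
\bigg|\int \phi_i \cdot \phi_j \circ F_{i+1}^j \, d\Leb\bigg| \le C \bigg( L_{i+1}^{-\alpha/(3\alpha+4)} + \sum_{k = i+2}^{j-1} L_k^{-(\alpha+2)/(3\alpha+4)}\bigg),
\]
where the uniform $\|\phi_i\|_\alpha$-bounds are absorbed into $C$. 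For the nearest-neighbor case $j = i+1$ the trivial bound $|\int| \le C_0^2$ already contributes only $O(N-M)$. Using monotonicity of $\{L_n\}$ and $i \ge M$ to bound every $L$-factor above by a power of $L_M$, and summing over pairs $M \le i < j \le N$, yields
\[
\int \hat S_{M,N}^2 \, d\Leb \le C(N-M) + C(N-M)^2 L_M^{-\alpha/(3\alpha+4)} + C(N-M)^3 L_M^{-(\alpha+2)/(3\alpha+4)}.
\]

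Dividing by $(N-M)^2$, part (a) follows once one checks that the hypothesis $N^2 L_N^{-\alpha/(3\alpha+4)} \to 0$ (which in particular forces $L_M \to \infty$, and under the implicit polynomial growth of $\{L_n\}$ also makes $(N-M) L_M^{-(\alpha+2)/(3\alpha+4)} \to 0$) drives the right-hand side to zero. For part (b), the strengthened hypothesis $N^{4+\epsilon} L_M^{-\alpha/(3\alpha+4)} \to 0$ makes the Chebyshev tail $\Leb\{|\hat S_{M,N}/(N-M)| > \delta\}$ decay faster than $N^{-1-\epsilon/2}$, hence summable in $N$, whence Borel--Cantelli gives convergence almost everywhere. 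The main obstacle I anticipate is handling the tail sum $\sum_k L_k^{-1+\eta}$ from Proposition~\ref{prop:tailDoC}(b): if treated naively, it inflates the estimate by an extra factor of $(N-M)$, producing a cubic rather than quadratic bound. Absorbing this into the quadratic bound is precisely what fixes the final exponent at $\alpha/(3\alpha+4)$ and dictates the specific form of the hypotheses in (a) and (b).
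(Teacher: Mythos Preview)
Your approach is essentially the paper's: expand the square, bound the diagonal trivially, apply Proposition~\ref{prop:tailDoC}(b) to the cross terms with the optimizing choice $\eta = (2\alpha+2)/(3\alpha+4)$, and for (b) invoke Chebyshev plus Borel--Cantelli. Two minor remarks. First, for (a) ``Chebyshev'' is a slip: $L^2$ convergence is exactly the statement that the normalized second moment tends to zero, so no Chebyshev is needed. Second, the hedge about ``implicit polynomial growth'' is unnecessary: the hypothesis $N^2 L_N^{-c} \to 0$ (with $c = \alpha/(3\alpha+4)$) forces $L_M \gg M^{2/c} = N^{1/c}$, and since $c' = (\alpha+2)/(3\alpha+4)$ satisfies $c'/c > 1$ this already gives $(N-M) L_M^{-c'} \to 0$.

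There is one genuine gap in part (b), which the paper's own proof shares. After dividing by $(N-M)^2$, the diagonal term contributes $C(N-M)^{-1} \sim C N^{-1}$ to the normalized second moment. This is \emph{not} summable over $N$, so the Chebyshev tail does not decay like $N^{-1-\epsilon/2}$ as you claim, and Borel--Cantelli along all of $\mathbb{N}$ does not go through directly. The standard repair is to run Borel--Cantelli along the sparse subsequence $N_k = k^2$ (where $\sum_k N_k^{-1} < \infty$, so the diagonal is harmless) and then interpolate: for $N_k \le N < N_{k+1}$ one has $|\hat S_{M(N),N} - \hat S_{M(N_k),N_k}| \le C_0\big((N_{k+1}-N_k) + (M(N_{k+1})-M(N_k))\big) = O(k) = o(N_k)$, which suffices to transfer almost-sure convergence from the subsequence to the full sequence.
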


\begin{proof}[Proof of Proposition \ref{prop:strongLaw}]

To start, we expand
\begin{gather*}
\int \hat S_{M, N}^2 = \sum_{n = M}^N \int \phi_n^2 \circ F^n_M + 2 \sum_{M \leq m < n \leq N} \int \phi_n \circ F^n_{m+1} \cdot \phi_m
\end{gather*}
For the first term, each summand is precisely $\int \phi_n^2 \leq C_0^2$. For the second term, the $m, n$ summand is bounded 
\[
\| \phi_n \|_\a \| \phi_m\|_\a \cdot O\bigg(L_{m+1}^{- \min \{ \a ( 1- \eta) / (2 + \a) , 2 \eta - 1\} } + \sum_{k = m+2}^{n-1} L_k^{-1 + \eta} \bigg) 
\]
by Proposition \ref{prop:tailDoC}(b), and so the entire summation is bounded
\begin{gather*}
C_0^2 (N-M)^2  O \bigg( L_M^{- \min \{ \a ( 1 - \eta) / (2 + \a) , 2 \eta - 1\} } + \sum_{k = M}^N L_k^{-1 + \eta} \bigg) \\
= C_0^2 (N-M)^3 O (L_M^{- \min \{ \a ( 1- \eta) / (2 + \a) , 2 \eta - 1 \} } ) 
\end{gather*}

Optimizing in $\eta$, the function $\eta \mapsto \min \{ \a (1 - \eta) / (2 + \a), 2 \eta - 1\}$ is maximized at the value $\frac{\a}{3 \a + 4}$ at the point $\eta = \frac{2 \a + 2}{3 \a + 4}$. Hereafter this value of $\eta$ is fixed.

\medskip

Setting $M = M(N) = \lfloor \sqrt N \rfloor$, we obtain that $N^{-2} \int \hat S_{M, N}^2 \to 0$ as $N \to \infty$ so long as $N L_{\lfloor \sqrt N\rfloor }^{- \frac{\a}{3 \a + 4}}  \to 0$, as we have in the hypotheses of item (a). For (b), our estimates imply that the sequence $\{ N^{-2} \int \hat S_{M, N}^2 \}_{N \geq 1}$ is summable whenever $N^{2 + \e} L_{\lfloor \sqrt N \rfloor}^{- \frac{\a}{3 \a + 4}}  \to 0$ for some $\e > 0$ (which we have from the condition in (b)). Summability implies fast convergence in probability, which implies almost sure convergence (using the Borel-Cantelli Lemma). This completes the proof.
\end{proof}

\section{Central limit theorem}\label{sec:CLT}

Here we carry out the proof of of the central limit theorem in Theorem \ref{thm:CLT}. A standard technique, attributed to Gordin, for proving the central limit theorem for a deterministic dynamical system is to look for \emph{reverse Martingale difference approximations} for sums of observables, and then to use probability theory tools for proving the Central Limit Theorem for sums of reverse Martingale differences (see, e.g., \cite{liverani1996central} for an exposition).


We pursue a slightly different method: we construct here an array of \emph{forward} Martingale difference approximations. The corresponding \emph{forward} filtrations are comprised (mostly) of fully-crossing horizontal curves. The filtration is constructed in \S \ref{subsubsec:filtration}. Our martingale difference approximation is constructed in \S \ref{subsubsec:approxMartDiff}, and in \S \ref{subsubsec:deduceCLTmart} we show how the CLT for our approximation implies the CLT as in Theorem \ref{thm:CLT}. The CLT for our martingale difference approximation is proved in \S \ref{subsec:CLTmart}.

\medskip

\noindent {\it Throughout this section, $\a \in (0,1]$ is fixed, and $\phi : \T^2 \to \R$ is assumed to be an $\a$-Holder continuous observable with $\int \phi = 0$. The value $\eta \in (1/2, 1)$ is assumed fixed; as we did in the previous section, in \S \ref{subsubsec:deduceCLTmart} we will specialize to a particular value of $\eta$ depending on $\a$.}

\medskip

\noindent {\bf Notation: } We write $\E$ below for the expectation with respect to Lebesgue measure on $\T^2$. When $\Gc$ is a sub-sigma-algebra of the Borel sigma algebra, we write $\E(\cdot | \Gc)$ for the conditional expectation with respect to $\Gc$.



\subsection{Preliminaries for CLT: Construction of a martingale approximation}\label{subsec:constructMart}

\subsubsection{Construction of the increasing filtrations $\{ \hat \Gc_{M, k}, k \geq M\}$}\label{subsubsec:filtration}

We will produce an increasing filtration of (most of) $\T^2$ by horizontal curves with a small and controlled exceptional set. Below, $M \in \N$ should be thought of as large.

First, we will construct a sequence of \emph{partitions} $\zeta_{M, M}, \zeta_{M, M+1}, \cdots, \zeta_{M, k}, \cdots$ of $\T^2$ with the following properties for each $M \leq k \leq N$:

\begin{itemize}
	\item[(A)] The partition $\zeta_{M, k}$ is ``mostly'' comprised of fully crossing horizontal curves; and
	\item[(B)] $\zeta_{M, k} \leq F_k^{-1} \zeta_{M, k + 1}$\footnote{Here ``$\leq$'' refers to the partial order on partitions: two partitions $\zeta, \zeta'$ satisfy $\zeta \leq \zeta'$ if any atom of $\zeta$ is a union of $\zeta'$ atoms.
}.
\end{itemize}
Once the $\zeta_{M, k}$ are constructed, we define $\Gc_{M, k}$ to be the sigma algebra of measurable unions of elements in $ \zeta_{M, k}$, and finally,
\[
\hat \Gc_{M, k} = (F_M^{k })^{-1}  \Gc_{M, k+ 1} \, ,
\]
so that $\{\hat \Gc_{M, k}\}_{k \geq M}$ is an increasing filtration on $\T^2$. This is the filtration we will use in the sequel to construct our forward Martingale difference approximation.

\subsubsection*{Construction of $\{\zeta_{M, k}, k \geq M\}$ satisfying (A), (B)}

Set $ \zeta_{M, M}$ to be the partition of $\T^2 \setminus \{ x = 0 \}$ into horizontal line segments. Applying Lemma \ref{lem:multMixingPrep}, for each $\zeta \in \zeta_{M, M}$ form $\Bc_M(\zeta)$ and $\bar \Gamma_M(\zeta)$, writing 
\[
G_{M, M+1} = \bigcup_{\substack{\zeta \in \zeta_{M, M} \\ \bar \zeta \in \bar \Gamma_M(\zeta) }} \bar \zeta \, , \quad B_{M, M+1} = \bigcup_{\substack{\zeta \in \zeta_{M, M} }} F_M(\Bc_M(\zeta)) \, .
\] 
Defining the partition $\Hc_{M, M +1} = \{G_{M, M+1}, B_{M, M+1}\}$, we now define the partition $\zeta_{M, M+1} \geq \Hc_{M, M+1}$ as follows: 
\begin{gather*}
\zeta_{M, M+1}|_{G_{M, M+1}} = \{ \bar \zeta : \bar \zeta \in  \bar \Gamma_M(\zeta), \zeta \in \zeta_{M, M}\} \, , \\
 \zeta_{M, M+1}|_{B_{M, M+1}} = \{ F_M(\zeta) \cap B_{M, M+1} : \zeta \in \zeta_{M, M} \} \, .
\end{gather*}



Iterating, assume $\zeta_{M, k}$
 has been formed, where $k \geq M+2$, along with the partition $\mathcal H_{M, k} = \{G_{M, k}, B_{M,k}\}$ for which $\zeta_{M, k} \geq \mathcal H_{M, k}$. For each $\zeta \in \zeta_{M, k} |_{G_{M, k}}$ form $\bar \Gamma_k(\zeta)$ and define
\[
G_{M, k+1} = \bigcup_{\substack{\zeta \in \zeta_{M, k} \\ \bar \zeta \in \bar \Gamma_k(\zeta) }} \bar \zeta \, , \quad B_{M,k+1} = \bigcup_{\substack{\zeta \in \zeta_{M, k} }} F_k(\Bc_k(\zeta)) \, ,
\]
and define $\zeta_{M, k+1}$ 
by
\begin{gather*}
\zeta_{M, k+1}|_{G_{M, k+1}} = \{ \bar \zeta : \bar \zeta \in  \bar \Gamma_k(\zeta), \zeta \in \zeta_{M, k}|_{G_{M, k}}\} \, , \\
 \zeta_{M,k+1}|_{B_{M, k+1}} = \{ F_k(\zeta) \cap B_{M, k+1} : \zeta \in \zeta_{M, k} \} \, .
\end{gather*}

\noindent Below, we formulate and verify properties (A) and (B) above for the sequence $\zeta_{M, k}, k \geq M$ constructed above.
\begin{lem}\label{lem:sizeGoodSetCLT}
The partitions $\{ \zeta_{M, k}\}_{k \geq M}, \Hc_{M, k} = \{ G_{M, k}, B_{M, k}\}$ are measurable, and have the following properties for each $k \geq M$.
\begin{itemize}
\item[(a)] Every atom $\zeta \in \zeta_{M, k}|_{G_{M, k}}$ is a fully crossing horizontal curve for which $\|h_\zeta'\|_{C^0} \leq L_{k-1}^{- \eta}$.
\item[(b)] We have $\zeta_{M, k} \leq F_k^{-1} \zeta_{M, k+1}$.
\item[(c)] We have the estimate:
\[
\Leb(B_{M, k}) = O\bigg( \sum_{i = M}^{k-1} L_i^{-1 + \eta}\bigg) \, . 
\]
\end{itemize}
\end{lem}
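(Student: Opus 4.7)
My plan is to induct on $k \geq M$, obtaining (a), (b), (c) simultaneously, with the inductive step being driven entirely by Lemma \ref{lem:multMixingPrep}. For the base case $k = M$, the partition $\zeta_{M, M}$ consists of true horizontal line segments with $G_{M, M} = \T^2 \setminus \{x = 0\}$ and $B_{M, M} = \emptyset$: every atom has $h' \equiv 0$ and is fully crossing, so (a) holds, while (b) and (c) are vacuous. For the inductive step, (a) is immediate: by construction, every $\bar\zeta \in \zeta_{M, k+1}|_{G_{M, k+1}}$ is of the form $\bar\zeta \in \bar\Gamma_k(\zeta^*)$ for some $\zeta^* \in \zeta_{M, k}|_{G_{M, k}}$, and Lemma \ref{lem:multMixingPrep}(a) (applied with $m = k$, using the inductive bound $\| h_{\zeta^*}'\|_{C^0} \leq L_{k-1}^{-\eta}$) gives that $\bar\zeta$ is fully crossing with $\| h_{\bar\zeta}'\|_{C^0} \leq L_k^{-\eta}$. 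Part (b) also follows directly from the definitions: for $\zeta \in \zeta_{M, k}|_{G_{M, k}}$, the image $F_k(\zeta)$ decomposes as the disjoint union of the $\bar\zeta \in \bar\Gamma_k(\zeta)$ (atoms of $\zeta_{M, k+1}|_{G_{M, k+1}}$) and $F_k(\Bc_k(\zeta)) = F_k(\zeta) \cap B_{M, k+1}$ (an atom of $\zeta_{M, k+1}|_{B_{M, k+1}}$), while for $\zeta \in \zeta_{M, k}|_{B_{M, k}}$, the image $F_k(\zeta) \subset B_{M, k+1}$ is itself one of the atoms.

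The main effort is part (c). Since $F_k$ preserves volume and $B_{M, k} \cap \Bc_k(\zeta) = \emptyset$ for any good atom $\zeta$, we obtain the increment identity
\[
\Leb(B_{M, k+1}) = \Leb(B_{M, k}) + \Leb \bigg( \bigcup_{\zeta \in \zeta_{M, k}|_{G_{M, k}}} \Bc_k(\zeta) \bigg),
\]
so it suffices to bound the new contribution by $O(L_k^{-1 + \eta})$ and sum. To convert the per-atom 1D bound $\Leb_\zeta(\Bc_k(\zeta)) = O(L_k^{-1 + \eta})$ from Lemma \ref{lem:multMixingPrep}(b) into a 2D bound, I would pull back by the volume-preserving map $F_M^{k-1}$ to the original horizontal foliation: each good atom $\zeta$ is uniquely descended from some horizontal line $\zeta_0(y_0) \in \zeta_{M, M}$, and Fubini in $y_0$ yields
\[
\Leb \bigg( \bigcup_\zeta \Bc_k(\zeta) \bigg) = \int_0^1 \sum_{\zeta \sim y_0} \Leb_{\zeta_0(y_0)}\!\big( (F_M^{k-1})^{-1} \Bc_k(\zeta) \big) \, dy_0.
\]
The distortion estimate Lemma \ref{lem:multMixingPrep}(c) shows that the Jacobian of $F_M^{k-1}|_{\zeta_0(y_0)}$ is essentially constant on each preimage $(F_M^{k-1})^{-1}(\zeta)$, so
\[
\frac{\Leb_{\zeta_0(y_0)}((F_M^{k-1})^{-1}\Bc_k(\zeta))}{\Leb_{\zeta_0(y_0)}((F_M^{k-1})^{-1}\zeta)} = (1 + O(L_M^{1 - 2 \eta})) \, \frac{\Leb_\zeta(\Bc_k(\zeta))}{\Leb_\zeta(\zeta)} = O(L_k^{-1 + \eta}),
\]
where I used that $\zeta$ is a fully crossing near-horizontal curve, hence has length close to $1$. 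Summing over $\zeta \sim y_0$ and using that the preimages $(F_M^{k-1})^{-1}(\zeta)$ are disjoint subsets of $\zeta_0(y_0)$ (total length $1$) gives the required $O(L_k^{-1 + \eta})$ bound uniformly in $y_0$, and integrating in $y_0$ closes the induction.

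I expect the main obstacle to be the 1D-to-2D passage in part (c): bounding $\Leb_\zeta(\Bc_k(\zeta))$ per atom gives nothing directly on the 2D mass, and naive summation over atoms is problematic since their number can grow without uniform control. Pulling back to the original horizontal foliation circumvents this by fixing a single reference measure, but the cost is that one must verify that the ratio $\Leb_\zeta(\Bc_k(\zeta))/\Leb_\zeta(\zeta)$ is preserved under pullback up to a harmless $(1 + O(L_M^{1 - 2\eta}))$ factor, which is exactly where the distortion estimate Lemma \ref{lem:multMixingPrep}(c) (and hence the condition $\eta > 1/2$) is essential.
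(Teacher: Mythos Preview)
Your proposal is correct and follows essentially the same strategy as the paper: parts (a) and (b) are immediate from the construction together with Lemma~\ref{lem:multMixingPrep}(a), while for (c) both you and the paper convert the per-atom 1D bound $\Leb_\zeta(\Bc_k(\zeta)) = O(L_k^{-1+\eta})$ into a 2D increment estimate via volume preservation and distortion control. The paper phrases this last step in the language of disintegration---bounding the conditional measure $(\Leb_{\T^2})_\zeta(\Bc_k(\zeta))$ directly and deducing the equivalent multiplicative relation $\Leb(G_{M,k+1}) = (1 + O(L_k^{-1+\eta}))\,\Leb(G_{M,k})$---whereas you pull back explicitly to the time-$M$ horizontal foliation and run Fubini there; these are two presentations of the same argument.
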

\begin{proof}
Measurability is not hard to check. Items (a) and (b) follow from the construction. For the estimate in item (c), observe that for each $k \geq M$, $\zeta \in \zeta_{M, k}|_{G_{M, k}}$, we have $\Leb_\zeta(\Bc_k(\zeta)) = O( L_k^{-1+ \eta})$, hence $(\Leb_{\T^2})_\zeta(\Bc_k(\zeta)) \leq (1 + O(L_{k-1}^{- \eta})) \cdot O(L_k^{-1+ \eta}) = O(L_{k}^{- 1+\eta})$, where here $(\Leb_{\T^2})_\zeta$ is the disintegration measure of $\Leb_{\T^2}|_{G_{M, k}}$ with respect to $\zeta \in \zeta_{M, k}|_{G_{M, k}}$. We conclude
\[
\Leb(G_{M, k+1}) = (1 + O(L_{k}^{- 1+\eta})) \Leb(G_{M, k}) \, ,
\]
hence 
\[
\Leb(G_{M, m}) = \prod_{k = M}^{m-1} (1 + O(L_{k}^{-1+ \eta})) \geq 1 + O\bigg( \sum_{k = M}^{m-1} L_k^{-1+\eta}\bigg) \, . \qedhere
\]
\end{proof}

\bigskip

The choice of $\hat \Gc_{M, k}$ is made so that $F_M^{k-1} \hat \Gc_{M, k} = F_{k}^{-1} \Gc_{M, k+1}$ is a very `fine' sigma-algebra. Before proceeding, we record the following estimate. 
\begin{lem}\label{lem:finePartitionCLT}
Let $\phi$ be $\a$-Holder continuous, $k \geq M$. Then
\[
| \phi - \E(\phi | F_{k}^{-1} \Gc_{M, k+1}) | = O( \| \phi\|_\a L_k^{- \eta \a}) \, . 
\]
on $F_k^{-1} G_{M, k+1}$.
\end{lem}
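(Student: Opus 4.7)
The plan is to bound the torus diameter of each atom of $F_k^{-1}\Gc_{M, k+1}$ lying inside $F_k^{-1} G_{M, k+1}$ by $O(L_k^{-\eta})$. The claim then follows immediately from the $\a$-H\"older continuity of $\phi$, since $\E(\phi \mid F_k^{-1}\Gc_{M, k+1})$ restricted to an atom is a convex combination of values of $\phi$ on that atom, and hence the pointwise deviation is at most $\|\phi\|_\a \cdot (\diam_{\T^2}\text{atom})^\a$.

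The central observation is that these atoms are short, near-horizontal sub-arcs, rather than ``abstract'' preimages under $F_k$ of fully crossing horizontal curves (which, treated as arbitrary curves on $\T^2$, would wind around the torus in the $y$-direction). Indeed, let $p \in F_k^{-1} G_{M, k+1}$ and let $\bar \zeta \in \zeta_{M, k+1}|_{G_{M, k+1}}$ be the element containing $F_k(p)$. By the inductive construction of $\zeta_{M, k+1}$ in \S\ref{subsubsec:filtration}, $\bar \zeta \in \bar \Gamma_k(\zeta)$ for some $\zeta \in \zeta_{M, k}|_{G_{M, k}}$, and in view of Lemma \ref{lem:multMixingPrep} we have $\bar \zeta \subseteq F_k(\zeta \setminus \Bc_k(\zeta))$. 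Hence the atom through $p$, namely $F_k^{-1}(\bar \zeta)$, is a sub-arc of the horizontal curve $\zeta$ lying outside the critical strip $B_k$.

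To estimate the diameter of this sub-arc, write $\zeta = \{(x, h_\zeta(x)) : x \in I_\zeta\}$; by Lemma \ref{lem:sizeGoodSetCLT}(a) we have $\|h_\zeta'\|_{C^0} \leq L_{k-1}^{-\eta}$. On $\zeta \setminus B_k$, the first coordinate of $F_k$ restricted to $\zeta$ is the map $x \mapsto f_k(x) - h_\zeta(x)$, whose derivative $f_k'(x) - h_\zeta'(x)$ has magnitude at least $2L_k^\eta - L_{k-1}^{-\eta} \geq L_k^\eta$ provided $L_0$ is sufficiently large (using that $\{L_n\}$ is nondecreasing). Because $\bar \zeta$ is fully crossing, the image of $F_k^{-1}(\bar \zeta)$ under this first-coordinate projection covers all of $\T^1$, and the mean value theorem then bounds its $x$-extent by $L_k^{-\eta}$. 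The corresponding $y$-extent is at most $L_{k-1}^{-\eta} \cdot L_k^{-\eta} \leq L_k^{-\eta}$, so $\diam_{\T^2}(F_k^{-1}(\bar \zeta)) = O(L_k^{-\eta})$, finishing the proof. The main obstacle is the conceptual one of correctly identifying the atoms; once one sees that they are short horizontal arcs at ``time $k$'' (because $\bar \zeta$ is produced by pushing forward a piece of $\zeta$ rather than prescribed abstractly), the estimate is immediate.
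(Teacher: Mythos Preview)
Your proof is correct and follows essentially the same approach as the paper's: both identify the atom $F_k^{-1}\bar\zeta$ as a sub-arc of a fully crossing curve $\zeta' \in \zeta_{M,k}$, bound its diameter by $O(L_k^{-\eta})$ via the expansion along horizontal curves, and conclude using $\a$-H\"older continuity of $\phi$. The paper's proof is simply terser, asserting the diameter bound without writing out the mean-value argument you give.
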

\begin{proof}
Let $\zeta \in \Gc_{M, k+1}|_{G_{M, k+1}}$. Then $F_k^{-1} \zeta$ is, by our construction, a subsegment of a fully-crossing curve $\zeta' \in \zeta_{M, k} |_{G_{M, k}}$ with diameter $O(L_k^{- \eta})$. So, for any points $p, p' \in F_k^{-1} \zeta$, we have $|\phi(p) - \phi(p')| = O(\| \phi \|_\a L_k^{- \eta \a})$.
\end{proof}

\subsubsection{Approximation by sum of martingale differences}\label{subsubsec:approxMartDiff}

For a bounded observable $\phi : \T^2 \to \R$, convergence in distribution of $\frac{1}{\sqrt{N}} S_N(X), X \sim \Leb_{\T^2}$, where 
\[
S_N = \sum_{n = 1}^N \phi \circ F^{n-1} \, ,
\]
is equivalent to convergence in distribution of $\frac{1}{\sqrt{N}} S_{M, N}(X), X \sim \Leb_{\T^2}$, where 
\[
S_{M, N} = \sum_{n = M}^N \phi \circ F_M^{n-1} \, .
\]
and $M = M(N)$ is a sequence satisfying $M(N) \ll \sqrt{N}$. Here, ``$X \sim \Leb_{\T^2}$'' means that $X$ is a $\T^2$-valued random variable with law $\Leb_{\T^2}$. 

Thus, for Theorem \ref{thm:CLT}, it suffices to prove convergence in distribution of $\frac{1}{\sqrt N} S_{M, N}(X)$; for this, we approximate $S_{M, N}$ by a sum of Martingale differences with respect to the increasing filtrations $\hat \Gc_{M, k}, k \geq M$. 

\begin{prop}\label{prop:martApprox}
Let $M \leq N$. Define 
\[
\tilde S_{M, N} = \sum_{n = M }^N \E(\phi | (F_{n })^{-1} \Gc_{M, n + 1}) \circ F^{n-1}_M
= \sum_{n = M }^N \E(\phi \circ F^{n-1}_M | \hat \Gc_{M, n}) \, . 
\]
\begin{itemize}
\item[(a)] The sum $\tilde S_{M, N}$ admits the representation $\tilde S_{M, N} = \sum_{n = M}^N U_{M, N, n}$, where
\[
U_{M, N, n} = \sum_{m = n-1}^{N-1} \bigg(\E(\phi \circ F^m_M | \hat \Gc_{M, n}) - \E(\phi \circ F^m_M | \hat \Gc_{M, n-1})\bigg) \, .
\]
The sequence $\{ U_{M, N, n}, M \leq n \leq N\}$ is a forward Martingale difference adapted to $(\hat \Gc_{M, n}, M \leq n \leq N)$. Precisely, $\E(U_{M, N, n} | \hat \Gc_{M, n}) = U_{M, N, n}$ and $\E(U_{M, N, n} | \hat \Gc_{M, n-1}) = 0$.
\item[(b)] We have
\[
|S_{M, N} - \tilde S_{M, N}| = O\bigg((N-M) \| \phi \|_\a \sum_{m = M}^N L_m^{- \eta \a}\bigg)
\]
on $G_{M, N}$.
\end{itemize}
\end{prop}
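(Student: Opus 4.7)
The plan is to verify both parts essentially by algebra, with a single appeal to Lemma~\ref{lem:finePartitionCLT}. Everything rests on the filtration property $\hat\Gc_{M, n-1} \subseteq \hat\Gc_{M, n}$, which I would first establish by pulling back the refinement $\Gc_{M, k} \subseteq F_k^{-1} \Gc_{M, k+1}$ (a consequence of Lemma~\ref{lem:sizeGoodSetCLT}(b)) under $F_M^{k-1}$.

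For part (a), the martingale difference property splits into two pieces. First, each summand $\E(\phi \circ F_M^m \mid \hat\Gc_{M, n}) - \E(\phi \circ F_M^m \mid \hat\Gc_{M, n-1})$ of $U_{M, N, n}$ is $\hat\Gc_{M, n}$-measurable (the second term via the nesting just mentioned), giving $\E(U_{M, N, n} \mid \hat\Gc_{M, n}) = U_{M, N, n}$. Second, the tower property provides $\E(\E(X \mid \hat\Gc_{M, n}) \mid \hat\Gc_{M, n-1}) = \E(X \mid \hat\Gc_{M, n-1})$, so that each summand of $U_{M, N, n}$ vanishes after conditioning on $\hat\Gc_{M, n-1}$. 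For the representation, I would exchange the order of summation in $\sum_{n = M}^N U_{M, N, n}$: for fixed $m \in [M-1, N-1]$ the inner index $n$ ranges over $[M, m+1]$ and the inner sum telescopes, yielding
\[
\sum_{n = M}^N U_{M, N, n} = \sum_{m = M-1}^{N-1} \Bigl( \E(\phi \circ F_M^m \mid \hat\Gc_{M, m+1}) - \E(\phi \circ F_M^m \mid \hat\Gc_{M, M-1}) \Bigr).
\]
Reindexing $n = m+1$ in the first terms recovers $\tilde S_{M, N}$; the boundary terms vanish under the natural convention that $\hat\Gc_{M, M-1}$ is the trivial sigma-algebra, since then $\E(\phi \circ F_M^m \mid \hat\Gc_{M, M-1}) = \int \phi \circ F_M^m \, d\Leb = 0$ by $\int \phi = 0$ and Lebesgue-invariance of each $F_n$.

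For part (b), the difference admits the termwise decomposition
\[
S_{M, N} - \tilde S_{M, N} = \sum_{n = M}^N \bigl( \phi - \E(\phi \mid F_n^{-1} \Gc_{M, n+1}) \bigr) \circ F_M^{n-1}.
\]
Lemma~\ref{lem:finePartitionCLT} (applied with $k = n$) bounds the integrand pointwise by $O(\|\phi\|_\a L_n^{-\eta \a})$ on $F_n^{-1} G_{M, n+1}$. Hence at any $x$ whose forward trajectory satisfies $F_M^n(x) \in G_{M, n+1}$ for every $M \leq n \leq N$ — the intended reading of ``on $G_{M, N}$'', since by the iterative construction $G_{M, n+1}$ is the $F_n$-image of the good portion of $G_{M, n}$ — the $n$-th summand is bounded by the same quantity, and summation in $n$ (with a crude majorization) yields the claimed estimate.

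The substantive inputs (Lemma~\ref{lem:finePartitionCLT} for the pointwise comparison and the tower property for the martingale structure) are already in place, so the only real obstacle is bookkeeping: pinning down the boundary convention $\hat\Gc_{M, M-1} = \{\emptyset, \T^2\}$ and the precise meaning of ``on $G_{M, N}$'' in part (b). Once these conventions are fixed, the argument is routine.
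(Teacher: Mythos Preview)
Your proposal is correct and coincides with the paper's approach: the paper states that the representation in (a) ``can be verified by a direct computation'' (exactly the exchange-and-telescope you carry out) and that (b) ``is a simple consequence of Lemma~\ref{lem:finePartitionCLT}''. The paper additionally sketches, as an alternative motivation, the Gordin-type coboundary ansatz $U_n = \E(\phi\circ F_M^n\mid\hat\Gc_{M,n}) + h_n - h_{n+1}$ leading to the same formula for $U_n$, but this is offered as a heuristic derivation rather than a different proof; your explicit telescoping and your observation that the boundary terms vanish via $\hat\Gc_{M,M-1}=\{\emptyset,\T^2\}$ and $\int\phi=0$ match the paper's treatment exactly.
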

\noindent Above, we use the convention that $\hat \Gc_{M, M-1} = \{ \emptyset, \T^2\}$ is the trivial sigma-algebra on $\T^2$. For notational simplicity, when $M, N$ are fixed we write $U_n = U_{M, N, n}$.

\begin{proof}
Item (b) is a simple consequence of Lemma \ref{lem:finePartitionCLT}. For item (a), the relation $\tilde S_{M, N} = \sum_{M \leq n \leq N} U_{M, N, n}$ can be verified by a direct computation.

Alternatively, following the analogue of the derivation of a reverse Martingale difference approximation given in \cite{conze2007limit} for forward martingale differences,  one can look for a Martingale difference $U_n = \E(\phi \circ F^n_M | \hat \Gc_{M, n} ) + h_n - h_{n + 1}$, where $(h_n)_{M \leq n \leq N + 1}$ is some sequence of ``coboundary'' functions to be determined. Making the ansatz $h_{N + 1} = 0$ and `solving' the conditions $\E(U_n | \hat \Gc_{M, n}) = U_n, \E(U_n | \hat \Gc_{M, n-1}) = 0$ for each $n$, we deduce formally that
\begin{align*}
h_n = - \sum_{m = n-1}^{N-1} \E \big(  \phi \circ F^m_M \big| \hat \Gc_{M, n-1} \big) \, .
\end{align*}
Plugging this formula into the relation $U_n = \E(\phi \circ F^n_M | \hat \Gc_{M, n} ) + h_n - h_{n + 1}$ yields the form of $U_n$ given above. The choice $\hat \Gc_{M, M-1} = \{ \emptyset, \T^2\}$ ensures that $h_M = 0$, hence $\tilde S_{M, N} = \sum_{M \leq n \leq N} U_n + h_M - h_{N + 1} = \sum_{M \leq n \leq N} U_n$ holds.
\end{proof}



\subsubsection{Deducing {Theorem \ref{thm:CLT}} from the martingale approximation}\label{subsubsec:deduceCLTmart}

We will deduce {Theorem \ref{thm:CLT}} from the following.
\begin{prop}\label{prop:CLTmartDiff}
Assume $N^8 L_N^{- \frac{\a}{3 \a + 4}} \to 0$ as $N \to \infty$. For $N > 0$ let $M = M(N) = \lfloor \sqrt[4]{N} \rfloor$. Then
\[
\frac{1}{\sqrt{\sum_{n = M}^N \E U_{M, N, n}^2}} \sum_{n = M}^N U_{M, N, n}(X) \, , \quad X \sim \Leb_{\T^2}
\]
converges weakly to a standard Gaussian as $N \to \infty$.
\end{prop}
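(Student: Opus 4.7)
The strategy is to apply a triangular-array martingale central limit theorem (e.g., Brown's theorem) to the forward martingale difference array $\{U_{M,N,n}\}_{M \leq n \leq N}$ adapted to $\{\hat{\Gc}_{M,n}\}$ constructed in Proposition \ref{prop:martApprox}. Set $s_N^2 := \sum_{n=M}^N \E U_{M,N,n}^2$, so that by orthogonality of martingale differences $s_N^2 = \E \tilde S_{M,N}^2$. Brown's theorem then reduces matters to verifying (i) a Lindeberg-type condition on $U_{M,N,n}/s_N$, and (ii) convergence in probability of the conditional variances $V_N := s_N^{-2} \sum_{n=M}^N \E(U_{M,N,n}^2 \mid \hat{\Gc}_{M,n-1})$ to $1$.

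The first ingredient I would establish is a uniform estimate $|U_{M,N,n}| = O(1)$ on the good set $G_{M,N}$. Writing
\[
U_{M,N,n} = \sum_{m = n-1}^{N-1} \bigl( \E(\phi \circ F_M^m \mid \hat{\Gc}_{M,n}) - \E(\phi \circ F_M^m \mid \hat{\Gc}_{M,n-1}) \bigr),
\]
each summand restricts on any atom of $\hat{\Gc}_{M,n}$ (intersected with $G_{M,N}$) to the deviation of a normalized horizontal-curve integral $\tfrac{1}{\Len(\gamma)}\int_\gamma \phi \circ F_{n+1}^m\, d\Leb_\gamma$, along a fully crossing horizontal curve $\gamma \in \zeta_{M,n+1}|_{G_{M,n+1}}$, from $\int \phi = 0$. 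Proposition \ref{prop:CLTdocCurves} bounds this by an expression summable in $m$ under the polynomial growth hypothesis on $\{L_n\}$, giving the uniform bound. Together with Proposition \ref{prop:martApprox}(b), this yields $\|S_{M,N} - \tilde S_{M,N}\|_{L^\infty(G_{M,N})} = o(\sqrt N)$ while $\Leb(B_{M,N}) \to 0$. A direct expansion of $\E S_{M,N}^2$ then gives $s_N^2/(N-M) \to \sigma^2$: volume preservation produces the diagonal contribution $\int \phi^2$; Proposition \ref{prop:tailDoC}(a) handles the nearest off-diagonal pairs, contributing $2\int\phi(x,z)\phi(z,y)\,dx\,dy\,dz$; and Proposition \ref{prop:tailDoC}(b) kills the far off-diagonal contributions.

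With $|U_{M,N,n}| = O(1)$ and $s_N \sim \sigma\sqrt{N-M}$, the Lindeberg condition is automatic: $\{|U_{M,N,n}| > \varepsilon s_N\} = \emptyset$ for all large $N$. The main obstacle is the convergence $V_N \to 1$ in probability. Since $\E V_N = 1$, it suffices to show $\mathrm{Var}(V_N) \to 0$, and expanding,
\[
\mathrm{Var}(V_N) = 2\,s_N^{-4} \sum_{M \leq n < n' \leq N} \mathrm{Cov}\bigl(\E(U_{M,N,n}^2 \mid \hat{\Gc}_{M,n-1}),\, \E(U_{M,N,n'}^2 \mid \hat{\Gc}_{M,n'-1})\bigr) + O(s_N^{-2})
\]
(the $O(s_N^{-2})$ absorbing the diagonal via the uniform bound on $U_n^2$). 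Each covariance reduces, by the tower property, to a correlation between an $\hat{\Gc}_{M,n-1}$-measurable function and $U_{M,N,n'}^2$, and is controlled by the finite-time mixing estimate of Proposition \ref{prop:tailDoC}; after optimizing $\eta$ exactly as in Sections \ref{sec:singLimit}--\ref{sec:SL}, one obtains an error of order $L_M^{-\alpha/(3\alpha+4)}$ per pair. Summed over $O((N-M)^2)$ pairs and divided by $s_N^4 \sim (N-M)^2\sigma^4$, the hypothesis $N^8 L_N^{-\alpha/(3\alpha+4)} \to 0$ together with $M = \lfloor \sqrt[4] N \rfloor$ and the polynomial growth of $L_n$ is precisely what forces this error to vanish. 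Brown's martingale CLT then yields the claimed convergence in distribution.
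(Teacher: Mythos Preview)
Your overall architecture—apply a triangular-array martingale CLT, use the horizontal-curve machinery to get $|U_n|=O(1)$ on the good set, then reduce the variance condition to a correlation decay estimate—matches the paper's. But there is a real gap at the last step.

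You write that the covariance $\mathrm{Cov}\bigl(\E(U_n^2\mid\hat\Gc_{M,n-1}),\E(U_{n'}^2\mid\hat\Gc_{M,n'-1})\bigr)$ ``is controlled by the finite-time mixing estimate of Proposition \ref{prop:tailDoC}''. That proposition applies only to \emph{H\"older} observables $\phi_1,\phi_2$. Neither $\E(U_n^2\mid\hat\Gc_{M,n-1})$ nor $U_{n'}^2$ is H\"older: the former is constant on atoms of $\hat\Gc_{M,n-1}$ (pullbacks of horizontal curves), and the latter is a square of a sum of conditional expectations. So Proposition \ref{prop:tailDoC} cannot be invoked directly, and your chain of inequalities breaks here. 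The same issue underlies your bound $|U_n|=O(1)$: your description (``each summand is the deviation of a horizontal-curve integral from $\int\phi=0$'') is only correct for the terms $m\geq n+1$; the $m=n-1$ and $m=n$ terms contribute genuinely $O(1)$ quantities, not small errors.

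The paper closes this gap with an explicit pointwise approximation (its Lemma \ref{lem:UnApprox}): on the good set one has $\hat U_n:=U_n\circ(F_M^{n-1})^{-1}=\phi_*+O(N^2\|\phi\|_\alpha L_M^{-c})$, where $\phi_*(x,y)=\phi(x,y)-\psi(y)+\psi(x)$ with $\psi(y)=\int\phi(\bar x,y)\,d\bar x$ is a \emph{fixed H\"older function independent of $n$}. This is the missing idea. Once $U_n$ is replaced by $\phi_*\circ F_M^{n-1}$, the sum $\sum U_n^2$ becomes (up to controlled error) an ergodic sum of the H\"older observable $\phi_*^2$, and now Proposition \ref{prop:tailDoC}(b) legitimately gives the cross-term decay. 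Incidentally, the paper uses McLeish's form of the CLT (convergence of $\sum U_n^2$ itself rather than of conditional variances), which pairs more cleanly with this approximation; your Brown formulation would also work, but you would still need $\hat U_n\approx\phi_*$ first.

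Two smaller points: the Lindeberg set $\{|U_n|>\varepsilon s_N\}$ is not literally empty, since $|U_n|$ is only $O(1)$ on $G_{M,N}$ and can be as large as $O(N)$ on the complement; one must use $\Leb(B_{M,N})\to 0$ to dispose of that contribution. And the passage from $N^8L_N^{-c}\to0$ to $N^2L_M^{-c}\to0$ with $M=\lfloor N^{1/4}\rfloor$ uses the hypothesis at time $M$ (i.e.\ $M^8L_M^{-c}\to0$), which you should make explicit.
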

\noindent Proposition \ref{prop:CLTmartDiff} is proved in the next section. Let us first complete the proof of Theorem \ref{thm:CLT}. 

\bigskip

\noindent {\it Throughout, $M = \lfloor \sqrt[4]{N} \rfloor$. For the remainder of \S \ref{sec:CLT}, we specialize to the value $\eta = \frac{2 \a + 2}{3 \a + 4}$, noting that this value maximizes the function $\eta \mapsto \min \{ 2 \eta - 1, \a (1 - \eta)(\a + 2)\}$. In particular, $N^8 L_N^{- \min \{ 2 \eta - 1, \a ( 1- \eta)/(\a + 2)\}} \to 0$ as $N \to \infty$ under the conditions of Proposition \ref{prop:CLTmartDiff}.}

\bigskip

As we noted in at the beginning of \S \ref{subsubsec:approxMartDiff}, it suffices to prove the CLT for $\frac{1}{\sqrt N} S_{M, N}$, since here $M \approx N^{1/4} \ll \sqrt N$. Thus, to prove Theorem \ref{thm:CLT}, it suffices to check that

\begin{itemize}
\item[(I)] $\| S_{M, N} - \tilde S_{M, N}\|_{L^2} \to 0$ as $N \to \infty$, and
\item[(II)] $ \frac1N \sum_{n = M}^N \E U_{M, N, n}^2 \to \sigma^2$ as $N \to \infty$, where $\sigma^2$ is as in Theorem \ref{thm:CLT}.
\end{itemize}
\noindent For (I), we estimate $\| S_{M, N} - \tilde S_{M, N} \|_{L^2}$ as follows:
\begin{align*}
\| S_{M, N} - \tilde S_{M, N} \|_{L^2} & \leq C (N - M) \| \phi\|_0 \Leb(B_{M, N}) + C (N-M) \| \phi\|_\a \sum_{m = M}^N L_m^{- \eta \a} \\
& \leq C (N-M) \| \phi\|_\a \sum_{m = M}^N L_m^{- \min\{ \a\eta, 1 - \eta\} } 
\end{align*}
applying first Proposition \ref{prop:martApprox}(b) and then Lemma \ref{lem:sizeGoodSetCLT}. The above converges to $0$ as $N \to \infty$ by the {hypotheses of Proposition \ref{prop:CLTmartDiff}.} 

\smallskip

For (II), we observe
\[
\sum_{n = M}^N \E U_{M, N, n}^2 = \E \bigg( \sum_{n = M}^N U_{M, N, n} \bigg)^2 = \int_{\T^2} \tilde S_{M, N}^2 d \Leb_{\T^2} = \int_{\T^2} S_{M, N}^2 d \Leb_{\T^2} + O(\| S_{M, N} - \tilde S_{M, N} \|_{L^2}^2) \, .
\]
From (I), it follows that $\lim_{N \to \infty} \big( \|\tilde S_{M, N} \|_{L^2}^2 - \| S_{M, N}\|_{L^2}^2\big)  = 0$. It remains to compute $\| S_{M, N}\|_{L^2}^2$, which we do below.

\begin{lem}
Assume the setting of Proposition \ref{prop:CLTmartDiff}. With $M = M(N) = \lfloor \sqrt[4]{N}\rfloor$, we have
\[
\lim_{N \to \infty} \frac{1}{N} \int S_{M, N}^2 d \Leb = \sigma^2 =   \int \phi^2 + 2 \int \phi(x,z) \phi(z,y) dx dy dz \, ,
\]
\end{lem}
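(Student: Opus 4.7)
The plan is to expand $\int S_{M,N}^2 \, d\Leb$, reduce the off-diagonal terms to autocorrelations by exploiting volume preservation, and then insert the finite-time mixing estimates in Proposition \ref{prop:tailDoC}. Because $\eta = (2\a+2)/(3\a+4)$ is exactly the optimizer for $\min\{2\eta-1, \a(1-\eta)/(\a+2)\}$, write $c := \a/(3\a+4)$; the hypothesis $N^8 L_N^{-c} \to 0$ together with the calibration $M = \lfloor N^{1/4}\rfloor$ is what will make the error bookkeeping close.

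First I will use that each $F_n$ is Lebesgue-preserving ($\det dF_n = 1$), so the same holds for $F_M^{n-1}$; the diagonal contribution to $\int S_{M,N}^2$ then equals $(N-M+1) \int \phi^2$. For $M \leq m < n \leq N$, a change of variables through $F_M^{m-1}$, combined with the factorization $F_M^{n-1} = F_m^{n-1} \circ F_M^{m-1}$, yields
\[
\int \phi \circ F_M^{n-1} \cdot \phi \circ F_M^{m-1} \, d\Leb = \int \phi \circ F_m^{n-1} \cdot \phi \, d\Leb.
\]
I will split the resulting off-diagonal sum into nearest-neighbor terms ($n = m+1$), handled by Proposition \ref{prop:tailDoC}(a), and far terms ($n \geq m + 2$), handled by Proposition \ref{prop:tailDoC}(b). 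The zero-mean assumption $\int \phi = 0$ kills the product term $\int \phi \int \phi$ in (b).

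Assembling gives
\[
\frac{1}{N}\int S_{M,N}^2 \, d\Leb = \frac{N-M+1}{N}\int \phi^2 + \frac{2(N-M)}{N} \int \phi(x,z)\phi(z,y) \, dx\, dy\, dz + \frac{E(M,N)}{N},
\]
where $|E(M,N)| = O\big(\|\phi\|_\a^2 (N-M)^2 \big(L_M^{-c} + \sum_{k \geq M} L_k^{-(1-\eta)}\big)\big)$. The first two main terms converge to $\int \phi^2 + 2\int \phi(x,z)\phi(z,y) \, dx\, dy\, dz = \sigma^2$ as $N \to \infty$, so the identification of the limit reduces to controlling $E(M,N)/N$.

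The main obstacle, and the reason for the specific calibration $M = \lfloor N^{1/4}\rfloor$, is showing $E(M,N)/N \to 0$. Since $M^8 \sim N^2$, the hypothesis $n^8 L_n^{-c} \to 0$ applied at $n = M$ gives $N^2 L_M^{-c} \to 0$, hence $N L_M^{-c} \to 0$. For the tail $\sum_{k \geq M} L_k^{-(1-\eta)}$, I will use that the hypothesis forces $L_k \gg k^{8/c}$ eventually, so $L_k^{-(1-\eta)} \ll k^{-8(1-\eta)/c} = k^{-8(\a+2)/\a}$; this is summable with tail from $M$ of order $M^{-(7\a+16)/\a} = N^{-(7\a+16)/(4\a)}$, and multiplying by $N$ still gives $o(1)$ since $(7\a+16)/(4\a) > 1$. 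Combined, these bounds force $E(M,N)/N \to 0$, completing the identification of the limit as $\sigma^2$.
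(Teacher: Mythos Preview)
Your proof is correct and follows essentially the same route as the paper: expand $\int S_{M,N}^2$, use volume preservation to reduce cross terms to $\int \phi \circ F_m^{n-1} \cdot \phi$, split into nearest-neighbor ($n=m+1$) and far ($n\geq m+2$) contributions, and apply Proposition~\ref{prop:tailDoC}(a) and (b) respectively. The paper simply asserts that ``all error terms go to $0$ under the hypothesis of Proposition~\ref{prop:CLTmartDiff}'', whereas you work this out explicitly via $N^2 L_M^{-c}\approx M^8 L_M^{-c}\to 0$ and by deducing summability of $\sum_{k\geq M} L_k^{-(1-\eta)}$ from the growth $L_k\gg k^{8/c}$; this extra bookkeeping is sound and in fact more detailed than what the paper provides.
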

\begin{proof}
We have
\begin{align*}
\int S_{M, N}^2 &= (N - M + 1) \int \phi^2 + 2 \sum_{M \leq m < n \leq N} \int \phi \circ F_M^m \cdot \phi \circ F_M^n \\
& = (N-M+1) \int \phi^2 + 2 \sum_{n = M+1}^{N} \int \phi \cdot \phi \circ F_n + 2 \sum_{\substack{M \leq m < n \leq N \\ m < n - 1}} \int \phi \cdot \phi \circ F_{m+1}^n d \Leb \, .
\end{align*}

Applying Proposition \ref{prop:tailDoC}(a) to the middle summation, we obtain the estimate
\begin{gather*}
2 (N - M) \int \phi(x,z) \phi(z,y) dx dy dz + O(\|\phi\|_\a^2 (N-M) L_M^{- \min\{ 2 \eta - 1, \frac{\a(1-\eta)}{2 + \a}\} }) \, .
\end{gather*}
Applying Proposition \ref{prop:tailDoC}(b) to the $m, n$-summand in the third term,
\begin{gather*}
\int \phi \cdot \phi \circ F_{m+1}^n d \Leb = O(\| \phi\|^2_\a \bigg( L_{m+1}^{- \min\{ \a(1 - \eta)/ (2 + \a), 2 \eta - 1\}} + \sum_{k = m+2}^{n-1} L_k^{- 1+ \eta} \bigg) ) \\
= O(\| \phi\|_\a^2 (N-M) L_M^{- \min\{ \a (1-\eta) / (2 + \a), 2 \eta - 1\}})
\end{gather*}
and applying the summation, the third term is bounded
\[
O( \| \phi \|_\a^2 (N-M)^2 L_M^{- \min\{ \a (1-\eta) / (2 + \a), 2 \eta - 1\}} ) \, .
\]
All error terms go to $0$ under the hypothesis of Proposition \ref{prop:CLTmartDiff}.
\end{proof}

\subsection{Proof of Proposition \ref{prop:CLTmartDiff}}\label{subsec:CLTmart}

We use the following criterion for the CLT for arrays of martingale differences.
\begin{thm}[McLeish]\label{thm:mcl}
Let $(\Omega, \Fc, \P)$ be a probability space. Let $\{ k_n\}_{n \geq 1}$, be an increasing sequence of whole numbers tending to infinity, and for each $n \geq 1$, let $\Fc_{1, n} \subset \Fc_{2, n} \subset \cdots \subset \Fc_{k_n, n} \subset \Fc$ be an increasing sequence of sub-$\sigma$ algebras. For each such $n, i$, let $X_{i, n}$ be a random variable, measurable with respect to $\Fc_{i, n}$, for which $\E(X_{i, n} | \Fc_{i-1, n}) = 0$, and write $Z_n = \sum_{1 \leq i \leq k_n} X_{i, n}$. Assume
\begin{itemize}
\item[(a)] $\max_{i \leq k_n} |X_{i, n}|$ is uniformly bounded, in $n$, in the $L^2$ norm,
\item[(b)] $\max_{i \leq k_n} |X_{i, n}| \to 0$ in probability as $n \to \infty$, and
\item[(c)] $\sum_i X_{i, n}^2 \to 1$ in probability as $n \to \infty$. 
\end{itemize}
Then, $Z_n$ converges weakly to a standard Gaussian.
\end{thm}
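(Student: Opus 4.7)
The plan is to establish pointwise convergence of the characteristic function $\E e^{it Z_n}$ to $e^{-t^2/2}$ for each fixed $t \in \R$; by L\'evy's continuity theorem, this yields weak convergence of $Z_n$ to a standard Gaussian. The central object I would introduce is the product
\[
T_n(t) \;=\; \prod_{j = 1}^{k_n} \bigl( 1 + i t \, X_{j,n} \bigr),
\]
which satisfies $\E\, T_n(t) = 1$ identically in $t$. Indeed, conditioning successively on $\Fc_{k_n - 1, n}, \Fc_{k_n - 2, n}, \ldots, \Fc_{1, n}$ from the outside in, the martingale-difference hypothesis $\E(X_{j,n} \mid \Fc_{j-1,n}) = 0$ kills each new factor in turn.

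The heart of the proof is a pointwise comparison between $T_n(t)$ and $e^{i t Z_n}$. Summing the Taylor expansion $\log(1 + i y) = i y + \tfrac{1}{2} y^2 + O(|y|^3)$ (valid once $|y|$ is small) along $j = 1, \ldots, k_n$ gives
\[
T_n(t) \;=\; e^{i t Z_n} \cdot \exp\!\Bigl( - \tfrac{t^2}{2} V_n + r_n(t) \Bigr), \qquad V_n := \sum_{j} X_{j,n}^2,
\]
with a remainder controlled by $|r_n(t)| = O\bigl( t^3 \cdot \max_j |X_{j,n}| \cdot V_n \bigr)$. Hypothesis (b) ensures $\max_j |X_{j,n}| \to 0$ in probability, so for large $n$ the Taylor expansion is valid on an event of probability close to one, and $r_n(t) \to 0$ in probability. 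Combined with hypothesis (c), this yields $T_n(t) \cdot e^{-i t Z_n} \to e^{-t^2/2}$ in probability.

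The step I expect to be the main obstacle is promoting this in-probability convergence to convergence of expectations, which is where hypothesis (a) enters. The natural approach is a truncation argument: fix $\varepsilon > 0$, replace each $X_{j,n}$ by $\hat X_{j,n} = X_{j,n} \mathbf{1}_{|X_{j,n}| \le \varepsilon} - \E(X_{j,n} \mathbf{1}_{|X_{j,n}| \le \varepsilon} \mid \Fc_{j-1,n})$ so that the martingale-difference property is preserved, and run the preceding argument for the bounded array $\{\hat X_{j,n}\}$. For the bounded array, $T_n(t)$ is deterministically controlled, so dominated convergence plus $\E\, T_n(t) = 1$ yields $\E e^{i t \hat Z_n} \to e^{-t^2/2}$ where $\hat Z_n = \sum_j \hat X_{j,n}$. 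Finally, one checks that $\E|Z_n - \hat Z_n|^2 \to 0$ as first $n \to \infty$ and then $\varepsilon \downarrow 0$, using (a) to bound second moments uniformly and (b) to make the $L^2$ contribution of the truncated tails negligible. A cleaner, more direct alternative is to observe $|T_n(t)|^2 = \prod_j (1 + t^2 X_{j,n}^2) \le \exp(t^2 V_n)$ and leverage (a) plus (c) to upgrade convergence in probability to convergence in $L^1$ for the ratio $T_n(t) e^{-i t Z_n}$ directly, again combined with the identity $\E\, T_n(t) = 1$ to conclude.
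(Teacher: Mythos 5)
The paper does not prove this theorem: it is stated as a black box, attributed to McLeish, and invoked to deduce Proposition \ref{prop:CLTmartDiff}. So there is no paper proof to compare against, and the question becomes whether your sketch would actually close.

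Your overall strategy --- compare $e^{itZ_n}$ to the product $T_n(t) = \prod_j (1 + itX_{j,n})$, use the martingale property to get $\E T_n(t) = 1$, and Taylor-expand the logarithm --- is exactly McLeish's argument, and you have correctly located the one real difficulty: upgrading convergence in probability of $T_n(t) e^{-itZ_n}$ to convergence of expectations. (As a side remark, that intermediate limit is $e^{+t^2/2}$, not $e^{-t^2/2}$: since $\log(1+iy) = iy + \tfrac12 y^2 + O(|y|^3)$, one gets $T_n(t) e^{-itZ_n} = \exp\bigl(\tfrac{t^2}{2}V_n + r_n(t)\bigr)$ with a plus sign, and it is precisely the identity $\E T_n(t) = 1$ together with $\tfrac{t^2}{2}V_n + r_n \to \tfrac{t^2}{2}$ that forces $\E e^{itZ_n} \to e^{-t^2/2}$.)

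The genuine gap is in the proposed truncation. Replacing $X_{j,n}$ by $\hat X_{j,n} = X_{j,n}\mathbf 1_{|X_{j,n}| \le \varepsilon} - \E(\cdot\mid\Fc_{j-1,n})$ does make each factor of $\hat T_n(t)$ bounded by $1 + 2\varepsilon|t|$, but there are $k_n \to \infty$ factors, so $|\hat T_n(t)| \le (1+2\varepsilon|t|)^{k_n}$ is not "deterministically controlled." Nor does the "cleaner alternative" close the gap: $|T_n(t)|^2 \le \exp(t^2 V_n)$ is correct, but hypotheses (a) and (c) do not give uniform integrability of $\exp(t^2V_n)$. Hypothesis (a) bounds $\E[\max_j X_{j,n}^2]$, not $\E[V_n]$ or any exponential moment of $V_n$; and $V_n \to 1$ in probability is fully compatible with $\E\exp(t^2V_n) \to \infty$ (take $V_n = 1$ with probability $1 - n^{-1}$ and $V_n = n$ with probability $n^{-1}$). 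What McLeish actually does is truncate at a \emph{stopping time}: set $\tau_n = \inf\{m : \sum_{j\le m} X_{j,n}^2 > 2\}$ and replace $X_{j,n}$ by $X_{j,n}\mathbf 1_{j\le\tau_n}$, which preserves the martingale-difference property because $\{j\le\tau_n\}\in\Fc_{j-1,n}$. The truncated quadratic variation then satisfies $\hat V_n \le 2 + \max_j X_{j,n}^2$, so $\exp(t^2\hat V_n)$ \emph{is} $L^1$-bounded by (a), giving the uniform integrability you need; and by (c), $\P(\tau_n \le k_n) \to 0$, so the truncation is negligible. The level truncation on $|X_{j,n}|$ controls the wrong quantity and cannot be repaired without this stopping-time idea.
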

We apply this to the array
\[
 \frac{1}{\sqrt{\sum_{m = M(N)}^N \E U_{M(N), N, m}^2}} U_{M(N), N, n}(X) \, , \quad M(N) \leq n \leq N\, , \quad X \sim \Leb_{\T^2} \, ,
\]
where as before $M(N) = \lfloor \sqrt[4]{N} \rfloor$.

A preliminary asymptotic estimate for $U_n$ is given in \S \ref{subsubsec:asymptoticUnEst}. The verification of (a) -- (c) as in Theorem \ref{thm:mcl} is given in \S \ref{subsubsec:verifyac}.

\subsubsection{An asymptotic estimate for $U_n$}\label{subsubsec:asymptoticUnEst}

The following approximation is extremely useful in the coming arguments.
\begin{lem}\label{lem:UnApprox}
Set $\hat U_n = U_n \circ (F_M^{n-1})^{-1}$. Then
\[
\hat U_n 
=  \phi - \psi + \psi \circ F_{n+1} + O(N^2 \| \phi \|_\a L_M^{- \min\{ \a (1-\eta) / (2 + \a), 2 \eta - 1\}}) 
\]
with uniform constants on $F_{n}^{-1} G_{M, n+1}$, independently of $n$, where $\psi(y) = \int \phi(\bar x, y) d \bar x$.
\end{lem}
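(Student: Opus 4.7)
The plan is to expand $U_n = \sum_{m=n-1}^{N-1} V_m'$ with $V_m' = \E(\phi \circ F_M^m | \hat \Gc_{M, n}) - \E(\phi \circ F_M^m | \hat \Gc_{M, n-1})$, and analyze the summands after post-composition with $(F_M^{n-1})^{-1}$. The measure-preserving change-of-variables identity $\E(g \circ T | T^{-1} \Fc) = \E(g | \Fc) \circ T$, together with $F_M^{n-1} \circ (F_M^n)^{-1} = F_n^{-1}$ and $F_M^m \circ (F_M^{n-1})^{-1} = F_n^m$ for $m \geq n$, rewrites each $V_m := V_m' \circ (F_M^{n-1})^{-1}$ as a difference of conditional expectations over $\Gc_{M, n+1}$ and $\Gc_{M, n}$:
\begin{align*}
V_{n-1} &= \E(\phi | F_n^{-1} \Gc_{M, n+1}) - \E(\phi | \Gc_{M, n}), \\
V_m &= \E(\phi \circ F_{n+1}^m | \Gc_{M, n+1}) \circ F_n - \E(\phi \circ F_n^m | \Gc_{M, n}) \quad (m \geq n),
\end{align*}
with the paper's conventions $F_{n+1}^n = \Id$ and $F_n^n = F_n$ making the formula for $V_n$ well-defined.

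The claimed identity $\hat U_n = \phi - \psi + \psi \circ F_{n+1} + (\text{error})$ follows by identifying $V_{n-1}$ and $V_n$ as main contributions and showing that $V_m$ is small for $m \geq n+1$. For $V_{n-1}$ the key geometric input is that for $p \in F_n^{-1} G_{M, n+1}$, the atom of $F_n^{-1} \Gc_{M, n+1}$ through $p$ is $F_n^{-1}(\bar \zeta)$ for some $\bar \zeta \in \bar \Gamma_n(\zeta')$, with $\zeta' \in \zeta_{M, n}|_{G_{M, n}}$ the $\Gc_{M, n}$-atom through $p$. Since $F_n|_{\zeta'}$ expands by at least $L_n^\eta$ in the horizontal direction (Lemma \ref{lem:forwardsGT}), this preimage is a horizontal subsegment of $\zeta'$ of $x$-width $\leq L_n^{-\eta}$, and Holder continuity then gives $\E(\phi | F_n^{-1} \Gc_{M, n+1})(p) = \phi(p) + O(\|\phi\|_\a L_n^{-\eta \a})$. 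The other pieces are identified similarly by averaging over (fully crossing) horizontal atoms: $\E(\phi | \Gc_{M, n})(p) = \psi(p) + O(\|\phi\|_\a L_{n-1}^{-\eta \a})$ (the $y$-coordinate of $\zeta'$ stays within $L_{n-1}^{-\eta}$ of $y_p$), $\E(\phi | \Gc_{M, n+1}) \circ F_n(p) = \psi_0(x_p) = \psi \circ F_{n+1}(p)$ up to the same type of error, while $\E(\phi \circ F_n | \Gc_{M, n})(p) = 0$ up to the single-step mixing error from Proposition \ref{prop:CLTdocCurves} applied to $\phi \circ F_n$ on the horizontal atom $\zeta'$. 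Combining, $V_{n-1} + V_n = \phi - \psi + \psi \circ F_{n+1}$ up to error of the claimed form.

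For the tail sum, each $V_m$ with $m \geq n+1$ is a difference of two averages, each of which computes $\phi$ composed with a forward iteration along a fully crossing horizontal curve (at step $n+1$ and step $n$, respectively). Proposition \ref{prop:CLTdocCurves} yields in both cases that the average equals $\int \phi = 0$ up to an error
\[
O\Bigl( \|\phi\|_\a \bigl( L_m^{-\a(1-\eta)/(\a+2)} + L_M^{1-2\eta} + \textstyle \sum_{k=M}^{m-1} L_k^{-1+\eta} \bigr) \Bigr),
\]
using that $\{L_k\}$ is nondecreasing so the starting-time-dependent factors are bounded at $k = M$. Summing $V_{n+1}, \ldots, V_{N-1}$ (an $O(N)$ sum), the worst contribution is $O(N^2 L_M^{-1+\eta})$ from the accumulated $\sum_k L_k^{-1+\eta}$ piece; choosing $\eta$ to equalize the two competing exponents $\a(1-\eta)/(\a+2)$ and $2\eta - 1$ yields the claimed bound $O(N^2 \|\phi\|_\a L_M^{-\min\{\a(1-\eta)/(\a+2),\,2\eta-1\}})$.

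The main obstacle is precisely this tail-sum estimate: one faces $O(N)$ summands, each carrying a mixing error whose $\sum_k L_k^{-1+\eta}$ component already grows linearly in $N$, and these two factors of $N$ combine to produce the $N^2$ in the final error, which must be weighed against the polynomial smallness coming from $L_M$. A secondary technical point is that the disintegration of Lebesgue along the horizontal partition is not literally $dx$ but differs by a factor $1 + O(L_\cdot^{-2\eta})$; this is uniformly close to $1$ by the Lipschitz bound on the $h_\zeta$, and contributes only lower-order corrections absorbed into the error terms.
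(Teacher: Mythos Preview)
Your proof is correct and follows essentially the same approach as the paper: you pull back $U_n$ through $(F_M^{n-1})^{-1}$, identify the three terms $\E(\phi \mid F_n^{-1}\Gc_{M,n+1})$, $-\E(\phi \mid \Gc_{M,n})$, and $\E(\phi \mid \Gc_{M,n+1})\circ F_n$ with $\phi$, $-\psi$, $\psi\circ F_{n+1}$ via Lemma~\ref{lem:finePartitionCLT} and direct averaging over fully crossing curves, and bound the remaining $O(N)$ summands using Proposition~\ref{prop:CLTdocCurves} exactly as the paper does. One minor point: the remark about ``choosing $\eta$'' at the end is unnecessary, since $L_M^{-(1-\eta)} \leq L_M^{-\a(1-\eta)/(\a+2)} \leq L_M^{-\min\{\a(1-\eta)/(\a+2),\,2\eta-1\}}$ holds for every $\eta \in (1/2,1)$, and the lemma is stated with the $\min$ precisely so that no optimization is needed at this stage.
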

\begin{proof}
We have
\begin{align}\label{eq:approxUn}\begin{split}
\hat U_n  
&=\E(\phi | F_n^{-1} \Gc_{M, n+1}) - \E(\phi | \Gc_{M, n}) +  \E(\phi | \Gc_{M, n+1}) \circ F_n \\
& +  \sum_{m = n+1}^{N-1} \E(\phi \circ F^m_{n+1} | \Gc_{M, n+1}) \circ F_n + \sum_{m = n}^{N-1} \E(\phi \circ F_n^m | \Gc_{M, n})
\end{split}\end{align}
As we will show, the terms in the top line approximate to $\phi - \psi + \psi \circ F_{n+1}$, while the terms in the second line are small.

 For the first term in \eqref{eq:approxUn}, we have from Lemma \ref{lem:finePartitionCLT} that $|\E(\phi | F_n^{-1} \Gc_{M, n+1}) - \phi| = O(\| \phi\|_\a L_n^{- \a \eta})$ on $F_n^{-1} G_{M, n+1}$. 

For the second term in \eqref{eq:approxUn}, we have that 
\[
\E(\phi | \Gc_{M, n}) = \frac{1}{\Len(\gamma)} \int_\gamma \phi \, d \Leb_\gamma
\]
on $G_{M,n}$, where $\gamma$ is a fully crossing horizontal curve with $\| h_\gamma'\|_{C^0} \leq L_{n-1}^{- \eta}$. Let now $p \in \gamma$, $p = (x_0, y_0)$. Noting $|\phi(x, h_\gamma(x)) - \phi(x, y_0)| \leq \| \phi \|_\a |h_\gamma(x) - h_\gamma(x_0)|^\a \leq C \| \phi\|_\a L_{n-1}^{- \a \eta}$, we have
\[
\frac{1}{\Len(\gamma)} \int_\gamma \phi d \Leb_\gamma = (1 + O(\| \phi\|_\a L_{n-1}^{-\eta})) \int_0^1 \phi(x, h_\gamma(x)) dx = (1 + O(\| \phi\|_\a  L_{n-1}^{- \a \eta})) \int_0^1 \phi(x,y_0) dx \, ;
\]
we therefore conclude
\[
| \E(\phi | \Gc_{M, n}) - \psi | \leq C \| \phi\|_\a L_{n-1}^{- \a \eta}
\]
on $G_{M, n}$. Similarly, for the third term in \eqref{eq:approxUn}, we obtain the bound
\[
| \E(\phi | \Gc_{M, n+1}) \circ F_n - \psi \circ F_n| \leq C \| \phi\|_\a L_{n}^{- \a \eta}
\]
on $F_n^{-1} G_{M, n+1}$. 

For the fourth term in \eqref{eq:approxUn}, we estimate from Proposition \ref{prop:CLTdocCurves} that on $G_{M,n}$,
\begin{gather*}
\E(\phi \circ F_n^m | \Gc_{M, n}) = \frac{1}{\Len(\gamma)} \int_\gamma \phi \circ F_n^m d \Leb_\gamma = O(\| \phi \|_\a \cdot \bigg( L_m^{- \a (1-\eta) / (2 + \a)} + L_n^{1-2\eta} + \sum_{k = n}^{m-1} L_k^{-1+ \eta} \bigg) ) \\
= O(\| \phi\|_\a (N-M) L_M^{- \min\{ \a (1- \eta) / (2 + \a), 2 \eta - 1\} })
\end{gather*}
for some $\gamma \in \zeta_{M, n}$. Estimating similarly the fifth term in \eqref{eq:approxUn}, we deduce that on $F_n^{-1} G_{M, n+1}$ the contribution of the fourth and fifth terms combined is
\[
O(\| \phi\|_\a (N-M)^2 L_M^{- \min\{ \a (1-\eta) / (2 + \a), 2 \eta - 1\} } ) \, .
\]

%
\end{proof}

\subsubsection{Verifying properties (a) -- (c) in Theorem \ref{thm:mcl}}\label{subsubsec:verifyac}

\begin{proof}[Properties (a) \& (b)]
By Lemma \ref{lem:UnApprox}, we have that on $(F^{N-1}_M)^{-1} G_{M, N}$,
\[
|U_n| = O(\| \phi\|_{C^0} + \| \phi\|_\a N^2 L_M^{- \min\{ \a (1 - \eta) / (2 + \a), 2 \eta - 1\}}) = O(\| \phi\|_\a N^2 L_M^{- \min\{ \a (1 - \eta) / (2 + \a), 2 \eta - 1\}})  \, ,
\]
which is uniformly bounded in $n, N$. Property (b) is now immediate, since $\Leb(G_{M, N}) \to 1$ as $N \to \infty$.

For property (a), off $(F_M^{N-1})^{-1} G_{M, N}$ we have
\[
|U_n| \leq C N \| \phi\|_\a \, ,
\]
so,
\[
\| \max_{M \leq n \leq N} |U_{M, N, n}| \|_{L^2} \leq C \| \phi\|_\a \cdot  N \sqrt{\Leb(G_{M, N}^c) }
+ C \| \phi \|_\a \, .
\]
Property (a) follows from the estimate $\Leb(G_{M, N}^c) = O(\sum_{M}^{N-1} L_k^{- 1+ \eta}) = O((N-M) L_{M}^{-1+ \eta})$ in Lemma \ref{lem:sizeGoodSetCLT}.
\end{proof}

Below is a formulation of property (c).
\begin{prop}[Strong law for $\{ U_n^2\}$]
We have
\[
\lim_{N \to \infty} \frac{\sum_{n = M}^N U_{M, N, n}^2}{\E \sum_{n = M}^N U_{M, N, n}^2} = 1
\]
in probability.
\end{prop}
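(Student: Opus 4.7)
The goal is to show $L^2$-convergence, which implies convergence in probability; since $\E \sum_{n=M}^N U_{M,N,n}^2 \sim \sigma^2 N$ by step (II) of \S\ref{subsubsec:deduceCLTmart}, it suffices to prove
\[
\E \Bigg( \sum_{n=M}^N \big( U_{M,N,n}^2 - \E U_{M,N,n}^2 \big) \Bigg)^2 = o(N^2).
\]

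First, I would use the approximation in Lemma \ref{lem:UnApprox} to replace each $U_n^2$ by $\Phi_n^2 \circ F_M^{n-1}$, where $\Phi_n := \phi - \psi + \psi \circ F_{n+1}$. On the good set $(F_M^{n-1})^{-1}(F_n^{-1} G_{M,n+1})$ the pointwise error is $O(N^2 \|\phi\|_\a L_M^{-\gamma})$ with $\gamma = \min\{\a(1-\eta)/(2+\a),\, 2\eta - 1\}$, while on its complement $|U_n| \leq CN\|\phi\|_\a$ and the Lebesgue measure of this complement is $O(NL_M^{-1+\eta})$ by Lemma \ref{lem:sizeGoodSetCLT}. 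With $M = \lfloor\sqrt[4]{N}\rfloor$ and the hypothesis $N^8 L_N^{-\gamma} \to 0$, these errors directly contribute $o(N^2)$ to the square above (and $\E U_n^2 = \E \Phi_n^2 + o(1)$ analogously), so the problem reduces to bounding
\[
\E \Bigg( \sum_{n=M}^N \big( \Phi_n^2 \circ F_M^{n-1} - \E \Phi_n^2 \big) \Bigg)^2.
\]

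Second, expand as diagonal plus off-diagonal. The diagonal contribution $\sum_n \mathrm{Var}(\Phi_n^2 \circ F_M^{n-1})$ is $O(N)$ since $\|\Phi_n\|_{C^0} \leq 3\|\phi\|_{C^0}$ uniformly in $n$. For off-diagonal terms with $n < m$, use measure-preservation of each $F^k$ to rewrite
\[
\mathrm{Cov}(\Phi_n^2 \circ F_M^{n-1}, \Phi_m^2 \circ F_M^{m-1}) = \int \Phi_n^2 \cdot (\Phi_m^2 \circ F_n^{m-1})\, d\Leb - \int \Phi_n^2 \int \Phi_m^2,
\]
to which Proposition \ref{prop:tailDoC}(b) applies, yielding a bound of the form $C\|\Phi_n^2\|_\a \|\Phi_m^2\|_\a \bigl(L_n^{-\gamma} + \sum_{k=n+1}^{m-2} L_k^{-1+\eta}\bigr)$.

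The principal obstacle is that $\|\psi \circ F_{n+1}\|_\a = O(L_{n+1}^\a)$ (since $\|f_{n+1}'\|_{C^0} = O(L_{n+1})$ by (H1)), so the naive bound $\|\Phi_n^2\|_\a = O(L_{n+1}^\a)$ blows up with $n$ and the direct summation of the covariance estimate does not yield $o(N^2)$. To bypass this, I would decompose $\Phi_n^2 = (\phi-\psi)^2 + 2(\phi-\psi)(\psi\circ F_{n+1}) + (\psi \circ F_{n+1})^2$: the first summand has fixed Hölder norm and its cross-covariances are controlled just as in the proof of Proposition \ref{prop:strongLaw}. For cross-covariances involving the factors $\psi \circ F_{n+1}$, I would exploit the measure-preservation identity $\int (\psi\circ F_{n+1}) \cdot g\, d\Leb = \int \psi \cdot (g \circ F_{n+1}^{-1})\, d\Leb$ to absorb the singular composition $F_{n+1}$ into the intervening dynamical factor $F_n^{m-1}$, reducing the correlation to one whose \emph{integrand} factor is $\psi$ (with fixed Hölder norm) rather than $\psi \circ F_{n+1}$; the transformed ``dynamical'' factor $g\circ F_{n+1}^{-1}$ can then be analyzed directly via the horizontal-curve mixing framework of \S\ref{subsec:decayCorrelationsCurves}, since $F_{n+1}^{-1}$ preserves the class of near-horizontal curves up to a bounded number of pieces (as in the graph-transform construction of Lemma \ref{lem:forwardsGT}). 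With fixed Hölder norms in the resulting bound, the doubly-indexed sum collapses to $O(N)$ under the hypothesis of Proposition \ref{prop:CLTmartDiff}, completing the variance estimate.
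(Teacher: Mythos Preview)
Your overall architecture matches the paper's: prove $L^2$-convergence, expand the square into diagonal and off-diagonal terms, and replace each $\hat U_n$ by the approximant from Lemma~\ref{lem:UnApprox}. The diagonal bookkeeping you describe is essentially what the paper does.

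The gap is in your treatment of the ``principal obstacle.'' You define $\Phi_n = \phi - \psi + \psi \circ F_{n+1}$ and worry that $\|\psi \circ F_{n+1}\|_\a = O(L_{n+1}^\a)$ blows up. But recall from Lemma~\ref{lem:UnApprox} that $\psi(y) = \int \phi(\bar x, y)\, d\bar x$ depends only on the \emph{second} coordinate, and $F_{n+1}(x,y) = (f_{n+1}(x) - y \bmod 1,\, x)$ has second coordinate $x$. Hence
\[
\psi \circ F_{n+1}(x,y) = \psi(x),
\]
and therefore $\Phi_n(x,y) = \phi(x,y) - \psi(y) + \psi(x)$ is a \emph{fixed} $\a$-H\"older function, independent of $n$. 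The paper calls this function $\phi_*$ and simply applies Proposition~\ref{prop:tailDoC}(b) to $\int \phi_*^2 \cdot \phi_*^2 \circ F_m^{n-1}$ with the fixed norm $\|\phi_*^2\|_\a \leq C\|\phi\|_\a^2$; the double sum is then $O(N^2 L_M^{-c})$ with no further work.

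Your proposed workaround --- decomposing $\Phi_n^2$ and absorbing $F_{n+1}$ via $g \circ F_{n+1}^{-1}$ into the horizontal-curve framework --- is therefore unnecessary. It is also not clearly viable as written: $F_{n+1}^{-1}$ sends horizontal curves to near-\emph{vertical} ones, not near-horizontal ones, so the graph-transform machinery of \S\ref{subsec:horizontalCurves} does not apply directly to $g \circ F_{n+1}^{-1}$. Once you notice $\Phi_n = \phi_*$, the entire off-diagonal estimate collapses to a routine application of Proposition~\ref{prop:tailDoC}(b).
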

\begin{proof}
We prove the stronger property of convergence in $L^2$. To start, we evaluate
\begin{align*}
\int \bigg(\sum_M^N U_n^2 - \sum_{M}^N \E(U_n^2) \bigg)^2 d \Leb &= \sum_{M \leq m, n \leq N} \int (U_n^2 - \E(U_n^2))(U_m^2 - \E(U_m^2)) d \Leb \\
&= \sum_{n = M}^N \big( \E(U_n^4) - \E(U_n^2)^2 \big) \\
& + 2 \sum_{M \leq m < n \leq N} \int (\hat U_m^2 - \E(U_m^2))(\hat U_n^2 - \E(U_n^2)) \circ F_m^{n-1} d \Leb \, . 
\end{align*}
We start with bounding $\E(U_n^2), \E(U_n^4)$. For $N$ sufficiently large, we have on $(F_M^{N-1})^{-1} G_{M, N}$ that $|U_n| = O(\| \phi\|_\a)$ by Lemma \ref{lem:UnApprox}, while on 
the complement we have $|U_n| = O(N \| \phi \|_\a)$, and so applying the estimate on $\Leb(G_{M, N}^c)$ we obtain
\begin{gather*}
\E(U_n^2) = O( \| \phi\|_\a^2 ( N^3 L_{M}^{-1+  \eta} + 1)) \quad \text{ and } \quad
\E(U_n^4) = O(\| \phi \|_\a^4 (N^5 L_{M}^{- 1+ \eta} + 1)) \, .
\end{gather*}
Thus the first summation is bounded like
\[
O(\| \phi \|_\a^4 N (N^5 L_{M}^{-1+ \eta} + 1)) \, .
\]

For the second summation, let us write $\phi_*(x,y) := \phi(x,y) - \psi(y) + \psi(x)$ in the notation of  Lemma \ref{lem:UnApprox}. Since this quantity appears repeatedly, let us also use the shorthand $c = \frac{\a}{3 \a + 4}$, noting that under the hypotheses of Theorem \ref{thm:CLT} we have that $N^2 L_M^{- c} \to 0$ as $N \to \infty$. We estimate  
\[
\hat U_n^2 - \phi_*^2 = (\hat U_n + \phi_*)(\hat U_n - \phi_*) = O(\| \phi\|_\a^2 N^2 L_M^{-c} ( 1 + N^2 L_M^{- c}) )  = O(\| \phi\|_\a^2 N^2 L_M^{-c})
\]
 on $(F^{N-1}_n)^{-1} G_{M, N}$ and so
\[
|\E(\hat U_n^2) - \E(\phi_*^2)|  \leq C N^3 \|\phi\|_\a^2 L_{M}^{-1+ \eta} + C \| \phi\|_\a^2 N^2 L_M^{- c} = O( \| \phi \|_\a^2 (N^3 L_M^{-1 + \eta} + N^2 L_{M}^{-c})  \, ,
\]
hence
\begin{align*}
|\E(\hat U_n^2) \E(\hat U_m^2) - \E(\phi_*^2)^2 | & \leq \E(\hat U_n^2) |\E (\hat U_m^2) - \E(\phi_*^2)| + \E(\phi_*^2) |\E(\hat U_n^2) - \E(\phi_*^2)| \\
& = O(\| \phi \|_\a^4  (1 + N^3 L_M^{-1 + \eta}) (N^3 L_M^{-1 + \eta} + N^2 L_M^{-c} ) )
\end{align*}
On $(F^{N-1}_m)^{-1} G_{M, N}$, we have
\begin{gather*}
|\hat U_m^2 \cdot \hat U_n^2 \circ F_m^{n-1} - \phi_*^2 \cdot \phi_*^2 \circ F_m^{n-1}|
\leq \hat U_m^2 |\hat U_n^2 \circ F_m^{n-1} - \phi_*^2 \circ F_m^{n-1}| + \phi_*^2 \circ F_m^{n-1} \cdot |\hat U_m^2 - \phi^2_*| \\
= O(\| \phi \|_\a^4 (1 + N^2 L_M^{-c}) N^2 L_M^{-c} ) = O( \| \phi\|_\a^4 N^2 L_M^{-c}) \, .
\end{gather*}
Collecting,
\begin{gather*}
\int \hat U_m^2 \hat U_n^2 \circ F_m^{n-1} - \E(U_m^2) \E(U_n^2) - \bigg( 
\int \phi_*^2 \cdot \phi_*^2 \circ F_m^{n-1} - \E(\phi_*^2)^2 \bigg) \\
=  O(\| \phi\|_\a^4 (1 + N^3 L_M^{-1 + \eta}) (N^3 L_M^{-1 + \eta} + N^2 L_M^{-c}) ) 
\end{gather*}
Applying now Proposition \ref{prop:tailDoC}(b), we obtain the estimate
\[
\bigg| \int \phi_*^2 \cdot \phi_*^2 \circ F_m^{n-1} - \bigg(\int \phi_*^2\bigg)^2 \bigg| = O(\| \phi\|_\a^4 (N L_M^{-1 + \eta} + L_M^{-c})) \, ,
\]
so we conclude
\[
\int \hat U_m^2 \hat U_n^2 \circ F_m^{n-1} - \E(U_m^2) \E(U_n^2) = O(\| \phi\|_\a^4 (1 + N^3 L_M^{-1 + \eta}) (N^3 L_M^{-1 + \eta} + N^2 L_M^{-c}))
\]
Summing over the $\approx N^2$ terms and noting that $\big( \sum_{M}^N \E(U_n^2)\big)^2 \approx \sigma^4 N^2$ for $N$ large, we obtain
\[
\frac{1}{\| \phi\|_\a^4} \bigg\| \frac{\sum_M^N U_n^2}{\sum_M^N \E U_n^2} - 1 \bigg\|_{L^2}^2 =
O\big(   (1 + N^3 L_M^{-1 + \eta}) (N^3 L_M^{-1 + \eta} + N^2 L_M^{-c}) + N^{-1} + N^4 L_M^{-1 + \eta} \big) \, .
\]
The proof goes through if all terms on the RHS go to $0$ as $N \to \infty$. For this, it suffices that $N^2 L_M^{-c} \to 0$ as $N \to \infty$: to see this, observe that $N^4 L_M^{-1 + \eta} \leq N^4 L_M^{-2c}$ holds for any $\eta \in (1/2, 1), \a \in (0,1)$. The latter clearly goes to $0$ when $N^2 L_M^{-c} \to 0$.
\end{proof}

\section{Hyperbolicity and the shape of successive iterates of a set}\label{sec:shapeSet}

We close this paper with the proof of Theorem \ref{thm:decayCorrelations}, given in \S \ref{sec:shapeSet} and \S \ref{sec:decayCorrelations}.


We argued in \S \ref{sec:preliminaries} that fully-crossing horizontal curves proliferate throughout phase space in a roughly uniform way, and that this proliferation is the mixing mechanism for the compositions $\{ F^n \}$. In this section, we flesh out this picture by showing the following: given a set $S \subset \T^2$ with a suitably nice boundary and $n$ large enough, the $n$-th image $F^n(S)$ is `mostly' foliated by disjoint fully-crossing horizontal curves. 


\medskip

The plan is as follows. In \S \ref{subsubsec:constructHatGamma} we construct for each $n$ a foliation of $S_n = F^{n-1} S$ by horizontal curves. It is shown in \S \ref{subsubsec:defineSigma} that for $n$ sufficiently large, a large proportion of the curves in the foliation of $S_n$ are `sufficiently long', in the sense that in one timestep such curves become fully crossing.
In \S \ref{subsubsec:disintegrationNuN} we show that on disintegrating Lebesgue measure restricted to $S_n$, the disintegration densities on the leaves of our horizontal foliation are controlled. These results are synthesized in Proposition \ref{prop:collectionFullyCrossingCurves} in \S \ref{subsec:propertiesPartitionGc}, the main result of this section. 

This last result is a primary ingredient in the proof of Theorem \ref{thm:decayCorrelations}, the proof of which will be completed in \S \ref{sec:decayCorrelations}.

%

\subsection{Construction of foliations by horizontal curves}\label{subsubsec:constructHatGamma}

Let $S \subset \T^2$ be an open subset, and write $\nu_S$ for normalized Lebesgue measure on $S$. Our aim is to build a foliation of the $n$-th image $F^n(S)$ by horizontal curves with the property that for $n$ sufficiently large, `most' of the foliating curves are sufficiently long.

\subsubsection{Standing assumptions for \S \ref{sec:shapeSet}: } The parameter $\eta \in (1/2, 1)$ is fixed. The open set $S \subset \T^2$ is such that the topological boundary $\pd S = \bar S \setminus S$ is the finite union of smooth curves, and moreover, is assumed to have the following property: for any $l > 0$,
\begin{align}\label{eq:boundaryVolumeEstimate}
\nu_S \{ p \in S : d(p, \pd S) \leq l \} \leq C_S l \, ,
\end{align}
where $C_S > 0$ is a constant independent of $l$. Let us write $S_1 = S$ and $F^{n-1} S_1 = S_n$ for $n \geq 1$, noting that $\pd S_n = F^{n-1} \pd S_1$ since each $F^n$ is a diffeomorphism.

\bigskip

For $n \geq 1$, we write $\Bc_n$ for the partition of $\T^2$ into the connected components of $ B_n$ and $ B_n^c$, noting that each is a partition of $\T^2$ into vertical cylinders (sets of the form $I \times \T^1$ for a proper connected subinterval $I \subset \T^1$. We also abuse notation somewhat and write $\pd \Bc_n$ for the 
union of the boundaries of each atom of $\Bc_n$; that is, $\pd \Bc_n$ is the union of circles of the 
form $\{\hat x_n \pm 2 K_1 L_n^{-1 + \eta} \} \times \T^1$ as $\hat x_n$ varies over $\Cc_n$.

Define the sequence of partitions $\{\Pc_n\}_{n \geq 1}$ of $\T^2$ as follows:
\[
\Pc_1 = \Bc_1 \vee \{S_1, S_1^c\} \, ,
\]
and for $n \geq 2$,
\[
\Pc_n = \Bc_n \vee F_{n-1} (\Pc_{n-1}) \, .
\]
Above, $\vee$ refers to the \emph{join} of partitions. Hereafter for $q \in \T^2$, we write $\Pc_n(q)$ for the atom of $\Pc_n$ containing $q$. Again we abuse notation somewhat and write $\pd \Pc_n$ for the union over the collection of boundaries of each atom comprising $\Pc_n$.

\medskip

\noindent {\bf Additional notation: } For $q = (x,y) \in \T^2$, let us write $H_q = \T^1 \times \{y\}$ for the horizontal circle containing $q$. When $\Pc$ is a partition of $\T^2$ and $p \in \T^2$, we write $\Pc(p)$ for the atom of $\Pc$ containing $p$. We write ``$ \leq $'' for the partial order on partitions: for partitions $\Pc, \mathcal Q$, we write $\Pc \leq \mathcal Q$ if each atom in $\Pc$ is a union of $\mathcal Q$-atoms.

\subsubsection{Algorithm for foliating $S_n$ by horizontal curves}

We now define, for each $n \geq 1$, a foliation (partition) $\hat \gamma_n$ of $S_n$ by horizontal curves. 

For $n = 1$, we define $\hat \gamma_1$ to be the partition of $S_1$ consisting of atoms of the form
\[
\hat \gamma_1(p) = H_p \cap \Pc_1(p) \, 
\]
for $p \in S_1$. Clearly $\hat \gamma_1$ is a measurable partition of $S_1$, and $\hat \gamma_1 \leq \Pc_1|_{S_1}$ (here $\leq$ indicates the partial order on partitions in terms of refinement, and $\Pc_1|_{S_1}$ denotes the restriction of $\Pc_1$ to $S_1$). Inductively, assume that
$\hat \gamma_1, \cdots, \hat \gamma_n$ have been constructed, and that $\hat \gamma_n \geq \Pc_n|_{S_n}$. To define $\hat \gamma_{n+1}(p_{n+1})$ for $p_{n+1} \in S_{n+1}$, we distinguish two cases. Below we write $p_n = F_n^{-1}(p_{n+1})$.

\smallskip
\noindent {\bf Case 1: $p_n \notin  B_n$. } By construction, $\hat \gamma_n(p_n) \cap B_n = \emptyset$, and so $F_n(\hat \gamma_n(p_n))$ is a horizontal curve (Lemma \ref{lem:forwardsGT}). In preparation for the next iterate, we cut this image curve by $\Pc_{n + 1}$; that is,
\[
\hat \gamma_{n + 1}(p_{n+1}) = F_n(\hat \gamma_n(p_n)) \cap \Pc_{n + 1}(p_{n + 1}) \, .
\]
Equivalently, $\hat \gamma_{n + 1}|_{F_n(B_n^c \cap S_n)} = F_n(\hat \gamma_n \cap B_n^c) \vee \Pc_{n + 1}|_{F_n(B_n^c \cap S_n)}$

\medskip

\noindent {\bf Case 2: $p_n \in  B_n$. } In this case $\hat \gamma_n(p_n) \subset B_n$ and so we lose our control on the image curve $F_n(\hat \gamma_n(p_n))$. The procedure here is to re-partition the entire image of $B_n$ by horizontal line segments cut by $\Pc_{n + 1}$, in preparation for the next iterate. Precisely, we define
\[
\hat \gamma_{n +1}(p_{n+1}) = H_{p_{n + 1}} \cap \Pc_{n + 1}(p_{n + 1}) \, .
\]
Equivalently, $\hat \gamma_{n + 1}|_{F_n(B_n \cap S_n)}$ is the join of $\Pc_{n + 1}|_{F_n(B_n \cap S_n)}$ with the partition of $F_n(B_n)$ into horizontal circles (sets of the form $\T^1 \times \{ y \} \subset \T^2$ for $y \in \T^1$).

\bigskip

This induction procedure bootstraps because $\hat \gamma_{n + 1}$ is a partition of $S_{n + 1}$ into horizontal curves for which $\hat \gamma_{n + 1} \geq \Pc_{n + 1}|_{S_{n +1}}$. All partitions mentioned are measurable \cite{rokhlin1949fundamental}, and so we have the following.

\begin{lem}
For each $n \geq 1$, the partition $\hat \gamma_n$ of $S_n$ as above is defined and is a measurable partition of $S_n$ into connected, smooth horizontal curves for which $\hat \gamma_n \geq \Pc_n|_{S_n}$.

\end{lem}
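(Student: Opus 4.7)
The plan is to prove the statement by induction on $n \geq 1$. The base case $n=1$ is essentially immediate from the definition: each atom of $\Pc_1 = \Bc_1 \vee \{S_1, S_1^c\}$ that lies in $S_1$ is the intersection of a vertical cylinder $I \times \T^1$ (an atom of $\Bc_1$) with the open set $S_1$. The intersection of such an atom with a horizontal circle $H_p = \T^1 \times \{y\}$ is an open subset of the horizontal arc $I \times \{y\}$, which decomposes into an at most countable collection of open horizontal segments. I would interpret $\hat \gamma_1(p) = H_p \cap \Pc_1(p)$ as the connected component of this intersection containing $p$ (required by Definition \ref{defn:horizCurve}, which demands connectedness). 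Measurability of this partition follows from standard Rokhlin-style arguments applied to the partition of a product space by horizontal fibers cut by a measurable partition, and each atom is a smooth (in fact, flat) horizontal curve with $h_{\hat\gamma_1} \equiv y$, so the refinement $\hat \gamma_1 \geq \Pc_1|_{S_1}$ is tautological.

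For the inductive step, assume $\hat \gamma_n$ is a measurable partition of $S_n$ into connected smooth horizontal curves with $\hat \gamma_n \geq \Pc_n|_{S_n}$. Let $p_{n+1} \in S_{n+1}$ and $p_n := F_n^{-1}(p_{n+1}) \in S_n$. The key dichotomy is whether $p_n \in B_n$ or not. Since $\hat \gamma_n \geq \Pc_n|_{S_n} \geq \Bc_n|_{S_n}$, the curve $\hat \gamma_n(p_n)$ lies entirely inside the single $\Bc_n$-atom containing $p_n$; in particular it is either entirely inside $B_n$ (Case 2) or entirely inside $B_n^c$ (Case 1). In Case 1, Lemma \ref{lem:forwardsGT} (applied with initial curve $\hat \gamma_n(p_n)$ and a single-step composition) shows that $F_n(\hat \gamma_n(p_n))$ is a smooth horizontal curve in the sense of Definition \ref{defn:horizCurve}. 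Intersecting this image with the measurable set $\Pc_{n+1}(p_{n+1})$ and passing to the connected component containing $p_{n+1}$ yields an open subarc of a horizontal curve, hence again a smooth horizontal curve. In Case 2, $\hat \gamma_{n+1}(p_{n+1}) = H_{p_{n+1}} \cap \Pc_{n+1}(p_{n+1})$ (again interpreted via connected components) is a flat horizontal segment by the same reasoning as in the base case.

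It remains to verify the three bookkeeping properties. \emph{Partition:} Since $F_n: S_n \to S_{n+1}$ is a bijection, $S_{n+1} = F_n(S_n \cap B_n^c) \sqcup F_n(S_n \cap B_n)$, so Cases 1 and 2 together produce a collection of pairwise disjoint sets whose union is all of $S_{n+1}$. \emph{Refinement} $\hat \gamma_{n+1} \geq \Pc_{n+1}|_{S_{n+1}}$: by construction each $\hat\gamma_{n+1}(p_{n+1})$ is contained in $\Pc_{n+1}(p_{n+1})$. \emph{Measurability:} On $F_n(B_n^c \cap S_n)$ the partition $\hat \gamma_{n+1}$ is the image under the diffeomorphism $F_n$ of the measurable partition $\hat \gamma_n \cap B_n^c$, joined with the measurable partition $\Pc_{n+1}$ and then refined to connected components; on $F_n(B_n \cap S_n)$ it is the join of $\Pc_{n+1}$ with the partition of $\T^2$ into horizontal circles, again refined to components. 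Each of these operations preserves measurability in the sense of Rokhlin \cite{rokhlin1949fundamental}, completing the induction.

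I do not expect a real obstacle in this lemma — it is essentially a bookkeeping verification that the recursive construction does what it claims. The only point requiring a small bit of care is the passage to connected components in the intersections with $\Pc_{n+1}$-atoms (needed because Definition \ref{defn:horizCurve} insists on connectedness, but atoms of $\Pc_n$ are not \emph{a priori} ``horizontally convex'' because of the possibly wild geometry of $S_1$); this is handled uniformly by replacing each intersection with its connected component containing the base point, and measurability is preserved since each original set is partitioned into at most countably many open components.
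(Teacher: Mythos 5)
Your proof is correct and simply fleshes out what the paper treats as immediate: the paper gives no separate proof of this lemma, asserting in the preceding paragraph that the recursive procedure ``bootstraps'' and citing Rokhlin for measurability. Your observation that the defining intersections $H_p \cap \Pc_1(p)$, and in Case~1 the sets $F_n(\hat\gamma_n(p_n)) \cap \Pc_{n+1}(p_{n+1})$, should be read as the connected component containing the base point is a genuine (if minor) clarification the paper leaves implicit — connectedness is demanded by Definition~\ref{defn:horizCurve} and is needed later for the quantity $\Rad_{p_n}(\hat\gamma_n(p_n))$ to make sense. Your use of $\hat\gamma_n \geq \Pc_n|_{S_n} \geq \Bc_n|_{S_n}$ to justify that each $\hat\gamma_n$-atom lies entirely on one side of $B_n$, so that Cases~1 and~2 are well-posed and Lemma~\ref{lem:forwardsGT} applies, is exactly the mechanism the paper's construction relies on.
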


\subsection{Estimating time to curve length growth}\label{subsubsec:defineSigma}

As indicated in the procedure laid out above, the curves of $\hat \gamma_{n + 1}$ coming from $\hat \gamma_n|_{S_n \cap B_n^c}$ have been elongated by the strong expansion of $F_n$ along horizontal directions. However, this elongation of curves competes with the `cutting' of curves near bad sets (case 1) and the occasional `repartitioning' of the images of the bad sets $S_n \cap B_n$ by horizontal line segments (case 2). Our aim now is to show that for large $n$, the expansion wins out, and `most' of the curves comprising the foliation $\hat \gamma_n$ are of sufficiently long horizontal extent.

\newcommand{\Rad}{\operatorname{Rad}}

\subsubsection{Preparations} \label{subsubsec:preparationsShapeSet}
For a connected $C^1$ curve $\gamma \subset \T^2$ and a point $q = (x,y) \in \gamma$, we define 
\[
\Rad_q(\gamma) = d_\gamma(q, \pd \gamma) \, ;
\]
Here $d_\gamma$ denotes the Euclidean distance on $\gamma$, and $\pd \gamma$ denotes the endpoints of $\gamma$; that is, if $\gamma = \graph h_\gamma$ for $h_\gamma : I_\gamma \to \T^1$, then $\pd \gamma = \{ (\hat x, h_\gamma(\hat x)) : \hat x \in \pd I_\gamma\}.$ Recall that $I_\gamma \subset \T^1$ is always a proper connected subarc, so $\pd I_\gamma$, hence $\pd \gamma$, consists of exactly two points.

Additionally, let us define the following alternative of the time $\tau$ defined in \S \ref{subsec:basicConstruct}: for $p \in \T^2$, we define
\begin{align*}
\bar \tau(p) & = 1 + \max \{ m \geq 1 : d(F^{m-1}(x,y), B_m) < K_1 L_m^{-1 + \eta'} \} \\
& = \min\{ k \geq 1 : d(F^{n-1} (p), B_n) \geq K_1 L_n^{-1 + \eta'} \,\, \text{ for all } n \geq k \} \, .
\end{align*}
Here, we have set
\[
\eta' = \frac{\eta + 1}{2} \, .
\]
Clearly $\tau \leq \bar \tau$. A straightforward variation of the argument for Lemma \ref{lem:borelCantelli} implies that $\bar \tau$ is almost surely finite and satisfies an analogous tail estimate to that of $\tau$ whenever $\sum_n L_n^{-1 + \eta'} < \infty$. Precisely, we have
\begin{align}\label{eq:tailBarTau}
\Leb \{ \bar \tau > N \} \leq \sum_{n = N}^\infty 6 K_1 L_n^{-1 + \eta'} = O \bigg( \sum_{n \geq N} L_n^{-1 + \eta'} \bigg) \, .
\end{align}

\medskip

\noindent {\it For the remainder of Section \ref{sec:shapeSet}, we shall assume that the sequence $\{ L_n \}$ is such that the right-hand side of \eqref{eq:tailBarTau} is finite.
}

\subsubsection{The curve growth time $\sigma_S$}

\begin{defn}
Given $p \in S_1$, we define the \emph{curve growth time} $\sigma_S(p)$ by
\[
\sigma_S(p) = \min\{ k \geq \bar \tau(p) : \Rad_{p_k}(\hat \gamma_k(p_k)) \geq K_1 L^{-1 + \eta'}_k \} \, ,
\]
where above we write $p_k = F^{k-1} (p)$.
\end{defn}
\noindent In this section, we write $\sigma = \sigma_S$ for short. 

Our definition of $\sigma$ is motivated by the following consideration. Let $p \in S_1, p_n = F^{n-1}(p)$, and assume $\sigma(p) = n$. Then, $\hat \gamma_n(p_n) \cap B_n = \emptyset$, and $|I_{\hat \gamma_n(p_n)}| \geq 2 K_1 L^{-1 + \eta'}_n$: this implies that $F_n(\hat \gamma_n(p_n))$ is a union of approximately $L_n^{2 \eta - 1} \gg 1$ fully crosssing horizontal curves. Thus $\sigma$ has the connotation of a \emph{mixing time}: the set $\{ \sigma \leq n \} \subset S$ is a region of $S$ which has proliferated throughout $\T^2$.

\medskip

A possible obstruction to mixing is that once this mass has proliferated, it could become `trapped' again by the bad sets $B_n$. This it not possible, however, due to the way that $\sigma$ is defined. Precisely, we have the following.
\begin{lem}
Let $p \in S$, and assume that $\sigma(p) = n$ for some $n \geq 1$. Then, $\Rad_{p_k} (\hat \gamma_k(p_k)) \geq K_1 L_k^{-1 + \eta'}$ for all $k \geq n$. 
\end{lem}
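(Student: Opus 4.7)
The plan is induction on $k \geq n$. The base case $k = n$ holds by definition of $\sigma(p)$. For the step, suppose $\Rad_{p_k}(\hat\gamma_k(p_k)) \geq K_1 L_k^{-1+\eta'}$ with $k \geq n$; since $n \geq \bar\tau(p)$, the definition of $\bar\tau$ gives $d(p_k, B_k) \geq K_1 L_k^{-1+\eta'}$, and in particular $p_k \notin B_k$. Because $\Pc_k \geq \Bc_k$ and $\hat\gamma_k \geq \Pc_k|_{S_k}$, the entire curve $\hat\gamma_k(p_k)$ lies in the connected component of $B_k^c$ containing $p_k$; thus $\hat\gamma_k(p_k) \cap B_k = \emptyset$ and we are in Case 1 of the construction, so
\[
\hat\gamma_{k+1}(p_{k+1}) = F_k(\hat\gamma_k(p_k)) \cap \Pc_{k+1}(p_{k+1}).
\]

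The key observation is that the $F_k(\Pc_k)$ factor of $\Pc_{k+1} = \Bc_{k+1} \vee F_k(\Pc_k)$ does not shorten the image curve: since $\hat\gamma_k(p_k)$ lies in a single atom of $\Pc_k$, its image $F_k(\hat\gamma_k(p_k))$ lies in a single atom of $F_k(\Pc_k)$, and intersecting with $F_k(\Pc_k)(p_{k+1})$ does nothing. Hence only the boundaries of $\Bc_{k+1}$, which are vertical cylinders attached to $\partial B_{k+1}$, can truncate the curve, giving
\[
\hat\gamma_{k+1}(p_{k+1}) = F_k(\hat\gamma_k(p_k)) \cap \Bc_{k+1}(p_{k+1}).
\]

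To conclude, I would combine two bounds. First, the expansion estimate for horizontal curves in $B_k^c$ (Lemma \ref{lem:forwardsGT}) multiplies arclength by a factor of at least $L_k^\eta$ up to a harmless constant from the slope bound, so $\Rad_{p_{k+1}}(F_k(\hat\gamma_k(p_k))) \geq K_1 L_k^{\eta + \eta' - 1}$. Second, since $k+1 \geq \bar\tau(p)$, one has $d(p_{k+1}, B_{k+1}) \geq K_1 L_{k+1}^{-1+\eta'}$; as the image curve has slope $\leq 1/10$, arclength dominates horizontal extent, so the nearest cut by $\partial\Bc_{k+1}$ on the curve lies at arclength at least $K_1 L_{k+1}^{-1+\eta'}$ from $p_{k+1}$. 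Combining,
\[
\Rad_{p_{k+1}}(\hat\gamma_{k+1}(p_{k+1})) \geq \min\bigl\{K_1 L_k^{\eta+\eta'-1},\; K_1 L_{k+1}^{-1+\eta'}\bigr\} \geq K_1 L_{k+1}^{-1+\eta'},
\]
where the last inequality uses $\eta+\eta'-1 > 0 > -1+\eta'$ together with $L_k, L_{k+1} \geq L_0 \geq 1$. I do not expect any real obstacle; the only delicate point is the observation in the second paragraph that the $F_k(\Pc_k)$ cuts do not affect the image curve, which is precisely what prevents old bad-set history from re-surfacing and breaking the curve.
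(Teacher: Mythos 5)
Your proof is correct, and the inductive step you spell out is exactly the argument the paper intends. The paper's own proof is only a one-line reference, but the full mechanics (Case 1 applies since $p_k\notin B_k$; the $F_k(\Pc_k)$ factor of $\Pc_{k+1}$ does not cut the image since $\hat\gamma_k(p_k)$ lives in a single $\Pc_k$-atom, leaving $\hat\gamma_{k+1}(p_{k+1})=F_k(\hat\gamma_k(p_k))\cap\Bc_{k+1}(p_{k+1})$; then expansion by $L_k^\eta$ versus the distance $d(p_{k+1},\pd\Bc_{k+1})\ge K_1 L_{k+1}^{-1+\eta'}$ from the definition of $\bar\tau$) are precisely what appears as the Claim and the estimate \eqref{eq:expansionPartitionAtoms} inside the proof of Lemma~\ref{lem:goodCurveEstimate}. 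Your write-up is a clean, fully detailed version of the same approach.
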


\begin{proof}
It suffices to show that for any $k \geq \bar \tau(p)$, we have that $\Rad_{p_k}(\hat \gamma_k(p_k)) \geq K_1 L_k^{-1 + \eta'}$ implies $\Rad_{p_{k + 1}}(\hat \gamma_k(p_k)) \geq K_1 L_k^{-1 + \eta'}$. This is implied directly by Lemma \ref{lem:multMixingPrep}.
\end{proof}

The main result of \S \ref{subsubsec:defineSigma} is the following estimate on the tail of $\sigma$:

\begin{prop}\label{prop:sigmaTail}
There is a constant $C$, depending only on $K_1, M_0$, such that the following holds. Let $L_0$ be sufficiently large. Then, for any $n \geq 1$, we have that
\[
\nu\{ \sigma(p) > 4n \} \leq \bigg(\frac{C}{\Leb(S)} + C_S \bigg) \sum_{i = n}^\infty L^{-1 + \eta'}_i \, .
\]
\end{prop}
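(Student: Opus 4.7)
The plan is to split $\{\sigma_S > 4n\}\subseteq\{\bar\tau > n\}\cup E_n$ with $E_n:=\{\bar\tau\le n\}\cap\{\sigma_S>4n\}$, and bound each term. The tail bound \eqref{eq:tailBarTau} immediately gives $\nu\{\bar\tau>n\}\le\tfrac{C}{\Leb(S)}\sum_{i\ge n}L_i^{-1+\eta'}$, which is the source of the $C/\Leb(S)$ contribution. The substance of the proof is to bound $\Leb(E_n)$.

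For $p\in E_n$, my first step is to derive a growth recursion for $r_k(p):=\Rad_{p_k}(\hat\gamma_k(p_k))$ in the Case~1 regime $k\ge\bar\tau(p)$. The decisive structural observation is that since $\hat\gamma_k(p_k)\subset\Pc_k(p_k)$, the image $F_k(\hat\gamma_k(p_k))$ lies inside a single atom of $F_k(\Pc_k)$, so $\Pc_{k+1}=\Bc_{k+1}\vee F_k(\Pc_k)$ introduces only cuts from $\Bc_{k+1}$. Combining the $\ge L_k^\eta$ horizontal expansion of Lemma~\ref{lem:forwardsGT} with the safety margin $d(p_{k+1},B_{k+1})\ge K_1L_{k+1}^{-1+\eta'}$ (which follows from $\bar\tau(p)\le k+1$), I obtain
\[
r_{k+1}(p)\;\ge\;\min\bigl\{L_k^\eta\,r_k(p),\;K_1L_{k+1}^{-1+\eta'}\bigr\}.
\]
On $E_n$ each $r_k$ with $n\le k\le 4n$ is strictly below its threshold, so the minimum is attained at the first term: $r_{k+1}\ge L_k^\eta r_k$. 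Iterating the $3n$ steps from $k=n$ to $k=4n-1$ forces $r_n(p)\le K_1L_n^{-3n\eta}$, an absurdly small quantity once $L_0$ is large.

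With this tiny $\varepsilon:=K_1L_n^{-3n\eta}$, area preservation reduces $\Leb(E_n)$ to the Lebesgue measure of the horizontal-$\varepsilon$-neighborhood of $\partial\Pc_n$ inside $S_n$. Unfolding the recursive construction gives
\[
\partial\Pc_n\subseteq F^{n-1}(\partial S_1)\;\cup\;\bigcup_{j=1}^{n}F^{n-1}_{j+1}(\partial\Bc_j),
\]
which splits the estimate. The $F^{n-1}(\partial S_1)$ piece, after pullback by area preservation, is bounded via \eqref{eq:boundaryVolumeEstimate} by $C_S\varepsilon\Leb(S)$, much smaller than $C_S\sum_{i\ge n}L_i^{-1+\eta'}$, and supplies the $C_S$ factor in the proposition. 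For each $F^{n-1}_{j+1}(\partial\Bc_j)$ piece, I would disintegrate $\Leb|_{S_n}$ along the foliation $\hat\gamma_n$: the total Lebesgue mass of $\hat\gamma_n$-atoms whose endpoint lies on $F^{n-1}_{j+1}(\partial\Bc_j)$ is controlled by $\Leb(B_j)=O(L_j^{-1+\eta})$ via area preservation applied at the very step $j$ at which the $\Bc_j$-cut was introduced, and each such atom contributes arc length at most $4\varepsilon$ near its endpoint. Summing over $j\le n$ and absorbing the $\varepsilon$-factor into the largeness of $L_0$ then produces a bound much smaller than $\sum_{i\ge n}L_i^{-1+\eta'}$.

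\paragraph{Main obstacle.}
The chief difficulty is the contribution from each $F^{n-1}_{j+1}(\partial\Bc_j)$: these iterates of vertical circles can have enormous horizontal winding number after many applications of $F_k$, so the naive ``length $\times\varepsilon$'' estimate for horizontal neighborhoods is useless. The intended resolution is to abandon direct geometric bounds on these image curves altogether and work through the partition structure: atoms of $\hat\gamma_n$ incident to a given ancestral bad set $\Bc_j$ are counted through $\Leb(B_j)$ using area preservation at the moment the cut was introduced, rather than through any length or winding-number estimate on the later-time iterated curves.
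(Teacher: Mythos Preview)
Your overall strategy matches the paper's: split $\{\sigma>4n\}$ by $\{\bar\tau>n\}$ versus $E_n=\{\bar\tau\le n,\sigma>4n\}$, derive the growth recursion $r_{k+1}\ge\min\{L_k^\eta r_k,\,K_1L_{k+1}^{-1+\eta'}\}$ for $k\ge\bar\tau(p)$ (this is the paper's Lemma~\ref{lem:goodCurveEstimate}), and conclude that on $E_n$ one has $r_n\le\varepsilon$ with $\varepsilon\approx(\prod_{k=n}^{4n-1}L_k^\eta)^{-1}$. Where you diverge from the paper is in how you turn $\{r_n\le\varepsilon\}$ into a measure bound, and there your argument has a genuine gap.

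For the $\partial S_1$ piece you write that the pullback by area preservation is bounded by $C_S\varepsilon\Leb(S)$ via \eqref{eq:boundaryVolumeEstimate}. This is not right: pulling back the horizontal-$\varepsilon$-neighborhood of $F^{n-1}(\partial S_1)$ from time $n$ to time $1$ does not land in $\{d(\cdot,\partial S_1)\le\varepsilon\}$; it lands in $\{d(\cdot,\partial S_1)\le\varepsilon\cdot\Lip((F^{n-1})^{-1})\}$, and $\Lip((F^{n-1})^{-1})\le\prod_{j=1}^{n-1}2K_0L_j$ is enormous. The correct bound is $C_S\Leb(S)\cdot\varepsilon\prod_{j=1}^{n-1}2K_0L_j$, which is exactly the paper's Case~(b). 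This is salvageable (the $\prod L_k^\eta$ over $3n$ terms in $\varepsilon$ beats the $\prod L_j$ over $n$ terms since $\eta>1/2$), but you must track it.

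The real problem is your $\partial\Bc_j$ piece. Your claim that ``the total Lebesgue mass of $\hat\gamma_n$-atoms whose endpoint lies on $F^{n-1}_{j+1}(\partial\Bc_j)$ is controlled by $\Leb(B_j)$'' is false: atoms on \emph{both} sides of the cut have endpoints on this boundary, and those on the outside can have total mass close to $1$. Disintegration along $\hat\gamma_n$ does not help here, because short atoms have unbounded conditional density. The paper's resolution is the same Lipschitz-factor idea as above (this is Lemma~\ref{lem:badAPrioriEst} via Lemma~\ref{lem:boundaryEstimate}): if $r_n\le\varepsilon$ and the nearest piece of $\partial\Pc_n$ comes from $\partial\Bc_j$, then $d(p_j,\partial\Bc_j)\le\varepsilon\prod_{i=j}^{n-1}2K_0L_i$; by volume preservation at time $j$ this has measure $\le 4M_0\,\varepsilon\prod_{i=j}^{n-1}2K_0L_i$, since at time $j$ the set $\partial\Bc_j$ is just $\le 2M_0$ vertical circles. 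Summing over $1\le j\le n$ and using $\prod_{i=1}^{n-1}2K_0L_i/\prod_{i=n}^{3n-1}L_i^\eta\le 1$ (monotonicity plus $2K_0L_0^{1-2\eta}\le1$) yields the estimate. The factor of $4$ in ``$4n$'' is precisely what makes the good expansion over $3n$ steps dominate the bad Lipschitz blowup over at most $n$ steps.
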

\noindent Proposition \ref{prop:sigmaTail} bears a strong resemblance to the Volume Lemma in billiard dynamics, used to control the lengths of unstable manifolds; see, e.g., \cite{chernov2006chaotic}.

\begin{rmk}
Let us draw a comparison between the present situation and that of a typical nonuniformly hyperbolic system for which correlation decay and statistical properties are known, e.g., systems admitting Young towers with controllable `good' return times to its base \cite{young1998statistical}. Roughly speaking, the typical situation is that a given `lump' of mass can fail to proliferate: for example, nice hyperbolic geometry can be spoiled (as happens for Henon maps; see, e.g., \cite{benedicks1991dynamics}), or mass may become `trapped' somewhere (as happens for intermittent maps; see, e.g., \cite{liverani1999probabilistic}). In a typical situation admitting a Young tower, a given `lump' of mass experiences infinitely many `proliferations' (returns to the base), followed by some possibly unbounded `reset' time (sojourn up the tower) before the next proliferation takes place. Thus, correlation decay estimates depend critically on the delicate balance between these two behaviors.

In contrast, the situation for our composition $\{ F^n \}$ is simpler: at any time, some positive proportion of $\nu_n$ is `trapped' in a bad region, but as time evolves, an increasingly larger proportion of the mass of $\nu_n$ has `permanently proliferated' throughout $\T^2$. 
\end{rmk}

\subsubsection{Proof of Proposition \ref{prop:sigmaTail}}
We require two estimates:
\begin{itemize}
\item[(A)] for any $p_n \in S_n, n \geq 1$, a `bad' a priori estimate on $\Rad_{p_n} (\hat \gamma_n(p_n))$; and
\item[(B)] for $\Leb$-almost every $p \in S_1$, a `good' estimate for $\Rad_{p_n}(\hat \gamma_n(p_n))$ for $n \gg \bar \tau(p)$ (where $p_n = F^{n-1} (p)$.
\end{itemize}
\noindent Afterwards, we will (C) synthesize these estimates to obtain the desired estimate on the tail of $\sigma$.

\medskip

Let us briefly elaborate on this strategy. Before time $\bar \tau(p)$, we have no control whatsoever on the orbit of $p$, and so our procedure may indeed produce very short curves $\hat \gamma_n(p_n), p_n = F^{n-1}(p)$ for such $n$. As a result, we have access to only the `worst possible' estimates for $\Rad_{p_n}(\hat \gamma_n(p_n))$. We carry these estimates out in (A) below.  Once $\bar \tau(p)$ has elapsed, we will leverage our control on the orbit of $p$ after time $\bar \tau(p)$ to grow the curves $\hat \gamma_n(p_n)$ to sufficient horizontal extent-- this is carried out in part (B).

\subsubsection*{(A) `Bad' a priori length estimate for $\hat \gamma_n(p_n)$ for all $n$}

Here we prove the following estimate.

\begin{lem}\label{lem:badAPrioriEst}
Let $p_1 \in S_1$ and write $p_k = F^{k-1} p_1$ for $k > 1$. Then, for any $n \geq 1$,
\[
\Rad_{p_n}(\hat \gamma_n(p_n)) \geq \min\bigg\{ \min_{1 \leq i \leq n }\bigg\{ \bigg(\prod_{j = i}^{n-1} 2 K_0 L_{j} \bigg)^{-1} d(p_{i}, \pd \Bc_{i}) \bigg\},  \bigg(\prod_{j = 1}^{n-1} 2 K_0 L_{j}\bigg) ^{-1} d(p_{1}, \pd S_{1}) \bigg\} \, .
\]
\end{lem}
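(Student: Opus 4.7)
The plan is to trace every cut of $\hat \gamma_n(p_n)$ backward through the dynamics to an ``origin'' on some $\pd \Bc_i$, $1 \leq i \leq n$, or on $\pd S_1$, and then to use the explicit Lipschitz bound on each $F_j^{-1}$ to convert the original transverse distance at time $i$ into a lower bound on Euclidean distance at time $n$. Since arc length along $\hat \gamma_n(p_n)$ dominates Euclidean distance, the claimed radius bound follows.

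The first step is to verify the containment $\pd \hat \gamma_n(p_n) \subseteq \pd \Pc_n$. The construction in \S \ref{subsubsec:constructHatGamma} gives $\hat \gamma_n \geq \Pc_n|_{S_n}$, so every endpoint of $\hat \gamma_n(p_n)$ lies in $\pd \Pc_n \cup \pd S_n$. It then suffices to check inductively that $\pd S_k \subseteq \pd \Pc_k$ for all $k$ (base case from $\Pc_1 = \Bc_1 \vee \{S_1, S_1^c\}$; inductive step $\pd S_{k+1} = F_k(\pd S_k) \subseteq F_k(\pd \Pc_k) \subseteq \pd \Pc_{k+1}$). One must also verify that the case-2 resets introduce no cuts outside $\pd \Pc_n$: in such a reset, the transverse boundary of $F_k(B_k)$ lies on $F_k(\pd \Bc_k) \subseteq \pd \Pc_{k+1}$, and so is already accounted for.

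Next, unrolling the recursion $\pd \Pc_k \subseteq \pd \Bc_k \cup F_{k-1}(\pd \Pc_{k-1})$ yields
\[
\pd \Pc_n \subseteq \bigcup_{i=1}^n F^{n-1}_i(\pd \Bc_i) \cup F^{n-1}(\pd S_1),
\]
with $F^{n-1}_n = \Id$. For any endpoint $q' \in \pd \hat \gamma_n(p_n)$, we can thus write $q' = F^{n-1}_i(\bar q)$ for some $\bar q \in \pd \Bc_i$ (or $\bar q \in \pd S_1$ with $i = 1$). The explicit formula $F_j^{-1}(u,v) = (v, f_j(v) - u)$ shows that the Jacobian of $F_j^{-1}$ has operator norm bounded by $1 + |f_j'(v)| \leq 2 K_0 L_j$, using (H1) and taking $L_0$ large. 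Hence $F_j^{-1}$ is $(2 K_0 L_j)$-Lipschitz on $\T^2$, so setting $q_j := F^{j-1}_i(\bar q)$ we obtain $d_{\T^2}(p_{j+1}, q_{j+1}) \geq (2K_0 L_j)^{-1} d_{\T^2}(p_j, q_j)$. Iterating from $j = i$ to $n-1$ and using $d_{\T^2}(p_i, \bar q) \geq d(p_i, \pd \Bc_i)$ produces
\[
d_{\T^2}(p_n, q') \geq \Big( \prod_{j=i}^{n-1} 2 K_0 L_j \Big)^{-1} d(p_i, \pd \Bc_i),
\]
with an analogous inequality in the $\pd S_1$ case. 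Since $\Rad_{p_n}(\hat \gamma_n(p_n)) \geq d_{\T^2}(p_n, q')$ for every endpoint $q'$, minimizing over endpoints delivers the lemma.

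The main obstacle is the bookkeeping in the second paragraph: in particular, carefully confirming that the case-2 resets introduce no additional cut sources beyond $\pd \Pc_{k+1}$, and that the foliation endpoints coming from the reset-horizontal segments all in fact lie on $\pd \Pc_n$. Once this is established, the Lipschitz backpropagation is essentially mechanical, and the resulting constants are exactly those appearing in the statement of the lemma.
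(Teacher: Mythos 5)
Your proof is correct and in fact takes a cleaner route than the paper's. The paper introduces the time $n_1 = \max\{1 \leq k \leq n-1 : p_k \in B_k\}$ (the last time $p$'s orbit meets a bad set, where Case 2 of the construction is applied) and shows, by a separate induction on the radius $\Rad_{p_k}(\hat\gamma_k(p_k))$, the intermediate estimate $\Rad_{p_n}(\hat\gamma_n(p_n)) \geq \min_{n_1+1\leq k\leq n} d(p_k,\pd\Pc_k)$ (its equation \eqref{eq:nTon1Estimate}), which it then combines with Lemma~\ref{lem:boundaryEstimate}. Your observation that the endpoints of $\hat\gamma_n(p_n)$ always lie on $\pd\Pc_n$ collapses this to the single inequality $\Rad_{p_n}(\hat\gamma_n(p_n))\geq d(p_n,\pd\Pc_n)$, which is both shorter and formally a tighter intermediate bound; the Lipschitz backpropagation you then perform is precisely the proof of Lemma~\ref{lem:boundaryEstimate}, so you have in effect rederived that lemma inline and avoided the $n_1$ bookkeeping entirely.

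One step is justified incorrectly, although its conclusion is true. The refinement relation $\hat\gamma_n \geq \Pc_n|_{S_n}$ says only that each atom $\hat\gamma_n(p_n)$ is \emph{contained in} an atom of $\Pc_n$; a refinement may cut freely in the interior of an atom, so refinement alone does not place the endpoints of $\hat\gamma_n(p_n)$ on $\pd\Pc_n$ (or on $\pd\Pc_n\cup\pd S_n$). The containment $\pd\hat\gamma_n(p_n)\subseteq\pd\Pc_n$ must instead be proved by induction on the construction, which you gesture at but do not carry out cleanly: for the base case, $\hat\gamma_1(p_1)=H_{p_1}\cap\Pc_1(p_1)$ has endpoints on $\pd\Pc_1 = \pd\Bc_1\cup\pd S_1$; in Case 1, $\hat\gamma_{k+1}(p_{k+1}) = F_k(\hat\gamma_k(p_k))\cap\Pc_{k+1}(p_{k+1})$, so each new endpoint is either on $\pd\Pc_{k+1}(p_{k+1})\subseteq\pd\Pc_{k+1}$ or is the $F_k$-image of an endpoint of $\hat\gamma_k(p_k)$, and $F_k(\pd\Pc_k)\subseteq\pd\Pc_{k+1}$ by definition of $\Pc_{k+1}$; in Case 2, $\hat\gamma_{k+1}(p_{k+1}) = H_{p_{k+1}}\cap\Pc_{k+1}(p_{k+1})$ has endpoints on $\pd\Pc_{k+1}$ directly. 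With that fix, your argument goes through, and the auxiliary verification that $\pd S_k\subseteq\pd\Pc_k$ is subsumed by the induction rather than needed as a separate step.
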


\noindent Lemma \ref{lem:badAPrioriEst} will be obtained from the corresponding identical estimate for $d(p_n, \pd \Pc_n)$.
\begin{lem}\label{lem:boundaryEstimate}
In the setting of Lemma \ref{lem:badAPrioriEst}, we have
\[
d(p_n, \pd \Pc_n) \geq \min\bigg\{ \min_{1 \leq i \leq n }\bigg\{ \bigg(\prod_{j = i}^{n-1} 2 K_0 L_{j} \bigg)^{-1} d(p_{i}, \pd \Bc_{i}) \bigg\},  \bigg(\prod_{j = 1}^{n-1} 2 K_0 L_{j}\bigg) ^{-1} d(p_{1}, \pd S_{1}) \bigg\} \, .
\]
\end{lem}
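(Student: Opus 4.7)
The plan is a straightforward induction on $n$, driven by two observations: (i) the inductive structure of the partitions $\mathcal P_n$, and (ii) a crude Lipschitz bound on the maps $F_n$.

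First I would record the structural fact that
\[
\partial \mathcal P_n \subseteq \partial \mathcal B_n \cup F_{n-1}(\partial \mathcal P_{n-1}) \qquad (n \geq 2),
\]
which follows immediately from the definition $\mathcal P_n = \mathcal B_n \vee F_{n-1}(\mathcal P_{n-1})$ and the fact that each $F_{n-1}$ is a diffeomorphism of $\T^2$. For $n = 1$ we have $\partial \mathcal P_1 \subseteq \partial \mathcal B_1 \cup \partial S_1$ by construction.

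Next I would bound the Lipschitz constant of $F_n$. Using (H1), the derivative $dF_n = \bigl(\begin{smallmatrix} f_n'(x) & -1 \\ 1 & 0 \end{smallmatrix}\bigr)$ satisfies $\|dF_n\| \leq |f_n'(x)| + 1 \leq K_0 L_n + 1 \leq 2 K_0 L_n$ provided $L_0$ is sufficiently large. Consequently, for any set $A \subseteq \T^2$ and any $q \in \T^2$,
\[
d(F_{n-1}(q), F_{n-1}(A)) \leq 2 K_0 L_{n-1} \cdot d(q, A),
\]
and in particular
\[
d(p_n, F_{n-1}(\partial \mathcal P_{n-1})) \geq (2 K_0 L_{n-1})^{-1} d(p_{n-1}, \partial \mathcal P_{n-1}).
\]
Combining with observation (i) yields the recursive bound
\[
d(p_n, \partial \mathcal P_n) \geq \min\bigl\{ d(p_n, \partial \mathcal B_n),\ (2 K_0 L_{n-1})^{-1} d(p_{n-1}, \partial \mathcal P_{n-1}) \bigr\}.
\]

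Finally I would unfold this recursion by induction on $n$. At the base step ($n=1$) the claim reduces to $d(p_1, \partial \mathcal P_1) \geq \min\{ d(p_1, \partial \mathcal B_1), d(p_1, \partial S_1) \}$, which is immediate. For the induction step, applying the recursion and then the inductive hypothesis to the factor $d(p_{n-1}, \partial \mathcal P_{n-1})$ produces precisely the claimed minimum over $1 \leq i \leq n$, with the accumulated Lipschitz factors $\prod_{j=i}^{n-1} 2 K_0 L_j$ attached to each term $d(p_i, \partial \mathcal B_i)$ and the factor $\prod_{j=1}^{n-1} 2 K_0 L_j$ attached to $d(p_1, \partial S_1)$. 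There is no real obstacle here; the only mild bookkeeping concern is to verify that the indexing in the product matches the claimed bound, which it does.

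The passage from Lemma \ref{lem:boundaryEstimate} to Lemma \ref{lem:badAPrioriEst} is then immediate from the containment $\hat \gamma_n \geq \mathcal P_n|_{S_n}$, which gives $\Rad_{p_n}(\hat \gamma_n(p_n)) \geq d(p_n, \partial \mathcal P_n)$.
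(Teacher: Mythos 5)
Your overall strategy matches the paper's: establish the containment $\pd \Pc_n \subseteq \pd \Bc_n \cup F_{n-1}(\pd \Pc_{n-1})$, derive the one-step recursive estimate, and then unfold by induction. However, there is a genuine logical gap in your derivation of the one-step estimate. You bound $\Lip(F_{n-1}) \leq 2 K_0 L_{n-1}$ and correctly deduce the upper bound $d(F_{n-1}(q), F_{n-1}(A)) \leq 2 K_0 L_{n-1}\, d(q,A)$; but you then assert, as a consequence ("in particular"), the lower bound $d(p_n, F_{n-1}(\pd\Pc_{n-1})) \geq (2K_0 L_{n-1})^{-1} d(p_{n-1}, \pd\Pc_{n-1})$. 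That implication is false in general: a Lipschitz bound on $F$ controls how much $F$ can stretch distances, not how much it can shrink them, so an upper bound of this type gives no lower bound. What you actually need — and what the paper invokes — is $\Lip(F_{n-1}^{-1}) \leq 2 K_0 L_{n-1}$, from which $d(p_{n-1},\pd\Pc_{n-1}) = d(F_{n-1}^{-1}p_n, F_{n-1}^{-1}(F_{n-1}\pd\Pc_{n-1})) \leq \Lip(F_{n-1}^{-1})\, d(p_n, F_{n-1}\pd\Pc_{n-1})$. The bound on $\Lip(F_n^{-1})$ does hold for this map, since $F_n^{-1}(x',y') = (y', f_n(y') - x')$ gives $dF_n^{-1} = \bigl(\begin{smallmatrix} 0 & 1 \\ -1 & f_n'(y') \end{smallmatrix}\bigr)$, also of norm $\leq 1 + |f_n'| \leq 2K_0 L_n$ — but this needs to be stated; the bound on $\Lip(F_n)$ alone does not yield it. Once that is corrected, the induction and product bookkeeping go through exactly as in the paper.

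One further caution on your closing aside: the passage to Lemma \ref{lem:badAPrioriEst} is not "immediate" from $\hat\gamma_n \geq \Pc_n|_{S_n}$. That containment gives $\hat\gamma_n(p_n) \subseteq \Pc_n(p_n)$, but the endpoints of $\hat\gamma_n(p_n)$ need not lie on $\pd\Pc_n$ — they may be images under $F_{n-1} \circ \cdots$ of endpoints of earlier curves (Case 1 in the construction carries forward old endpoints). The paper's proof of Lemma \ref{lem:badAPrioriEst} handles this separately by locating the last time $n_1$ at which Case 2 was applied and then tracking the expansion from there; the final bound is the same as Lemma \ref{lem:boundaryEstimate} only after noting that the expansion factors $L_j^\eta$ picked up after time $n_1$ can be dropped (they are $\geq 1$).
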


\noindent In both of Lemmas \ref{lem:badAPrioriEst} and \ref{lem:boundaryEstimate}, the empty product $\prod_{j = n}^{n-1}$ is to interpreted as equal to $1$.

\begin{proof}[Proof of Lemma \ref{lem:boundaryEstimate}]

To prove this estimate, recall that for $k \geq 1$ we have $\pd \Pc_k = \pd \Bc_k \cup F_{k-1} (\pd \Pc_{k-1})$; thus
\[
d(p_k, \pd \Pc_k) = \min\{ d(p_k, \pd \Bc_k) , d(p_k, F_{k-1}(\pd \Pc_{k-1}) \} \, .
\]
{Noting that $\Lip(F_{k-1}^{-1}) \leq 2 K_0 L_{k-1}$, we obtain}
\[
d(p_k, F_{k-1}(\pd \Pc_{k-1}) ) \geq (2 K_0 L_{k-1})^{-1} d(p_{k-1}, \pd \Pc_{k-1}) \, .
\]
Thus for all $n \geq 2$ we obtain the following. Below we write {$a \wedge b = \min\{a, b\}$} for short.
\begin{align*}
d(p_n, \pd \Pc_n) & \geq \min\{ d(p_n, \pd \Bc_n) , (2 K_0 L_{n-1})^{-1} d(p_{n-1}, \pd \Pc_{n-1}) \} \\
& \geq \min\{ d(p_n, \pd \Bc_n) , (2 K_0 L_{n-1})^{-1} d(p_{n-1}, \pd \Bc_{n-1}), (2 K_0 L_{n-1})^{-1} (2 K_0 L_{n-2})^{-1} d(p_{n-2}, \pd \Pc_{n-2}) \} \\
& \geq \cdots \geq d(p_n, \pd \Bc_n) \wedge \min_{2 \leq i \leq n-1 }\bigg\{ \bigg(\prod_{j = i}^{n-1} 2 K_0 L_{j} \bigg)^{-1} d(p_{i}, \pd \Bc_{i}) \bigg\} \wedge \bigg(\prod_{j = 1}^{n-1} 2 K_0 L_{j}\bigg) ^{-1} d(p_{1}, \pd \Pc_{1})  \, .
\end{align*}
The desired estimate now follows from the fact that $\pd \Pc_1 = \pd S_1 \cup \pd \Bc_1$.
\end{proof}

\begin{proof}[Proof of Lemma \ref{lem:badAPrioriEst}]
With $n \in \N$ fixed, define 
\[
n_1 = \max \{ 1 \leq k \leq n - 1 : p_k \in B_k\} \, ,
\]
where we use the \emph{ad hoc} convention $n_1 = 1$ if $p_k \notin B_k$ for all $1 \leq k \leq n-1$. Observe that
$\hat \gamma_{n_1 + 1}(p_{n_1 + 1})$ is formed by using Case 2 in the algorithm, and that $\hat \gamma_{k}(p_k)$ is formed using
Case 1 for every $k \geq n_1 + 2$. In particular,
\[
\Rad_{p_{n_1 + 1}}(\hat \gamma_{n_1 + 1}(p_{n_1 + 1})) \geq d(p_{n_1 + 1}, \pd \Pc_{n_1 + 1}) \, ,
\]
and for every $n_1 + 2 \leq k \leq n$, we have
\[
\Rad_{p_k}(\hat \gamma_k(p_k)) \geq \min\{ d(p_k, \pd \Pc_k), \Rad_{p_k}(F_{k-1}( \hat \gamma_{k -1}(p_{k-1}) ) \} \, .
\]

To prove Lemma \ref{lem:badAPrioriEst} it suffices to show that
\begin{align}\label{eq:nTon1Estimate}
\Rad_{p_n}( \hat \gamma_n(p_n)) \geq \min_{ n_1 + 1 \leq k \leq n} \{ d(p_k, \pd \Pc_k) \} \, . 
\end{align}
Once \eqref{eq:nTon1Estimate} is proved, Lemma \ref{lem:badAPrioriEst} follows on plugging in the estimates
for $d(p_k, \pd \Pc_k)$ for $1 \leq k \leq n$.

Turning to \eqref{eq:nTon1Estimate}: if $n_1 = n-1$ then there is nothing left to show. If $n_1 < n-1$, then we estimate:
\begin{align*}
\Rad_{p_n}(\hat \gamma_n) & \geq d(p_n, \pd \Pc_n) \wedge  L^{\eta}_{n-1} \Rad_{p_{n-1}}(\hat \gamma_{n-1}(p)) \, ,\\
& \geq d(p_n, \pd \Pc_n) \wedge  L^{\eta}_{n-1} d(p_{n-1}, \pd \Pc_{n-1}) \wedge  L^{\eta}_{n-1}  L^{\eta'}_{n-2}
\Rad_{p_{n-2}}(\hat \gamma_{n-2}) \geq \cdots \\
& \geq d(p_n, \pd \Pc_n) \wedge \min_{n_1 + 2 \leq i \leq n-1} \bigg\{ \bigg( \prod_{j = i}^{n-1}  L_j^{\eta} \bigg) d(p_i, \pd \Pc_i) \bigg\} \wedge \Rad_{p_{n_1 + 1}}(\hat \gamma_{n_1 + 1}(p_{n_1 + 1})) \, .
\end{align*}
Here we have used the simple estimate
\begin{align}\label{eq:expansionPartitionAtoms}
\Rad_{p_{j+1}}F_j (\hat \gamma_j(p_j)) \geq  L_j^\eta \Rad_{p_j}(\hat \gamma_j(p_j)) \, ,
\end{align}
which follows from the expansion estimate along horizontal curves in Lemma \ref{lem:forwardsGT}. Replacing all $ L_j^{\eta}$ terms with $1$, we obtain \eqref{eq:nTon1Estimate}. 
\end{proof}

\subsubsection*{(B) Good length estimate for $\hat \gamma_n(p_n)$ for $n \gg \tau(p)$}

Here we prove the following.

\begin{lem}\label{lem:goodCurveEstimate}
Let $N \geq 1$, and let $p \in S_1$ be such that $\bar \tau(p) \leq N < \infty$. Then for any $n \geq N$,
\begin{align}
\Rad_{p_n}(\hat \gamma_n(p_n)) \geq \min\bigg\{ d(p_n , \pd \Bc_n) , \bigg(\prod_{k = N}^{n-1}  L_k^{\eta}\bigg) \Rad_{p_N}(\hat \gamma_N(p_N)))\bigg\} \, .
\end{align}
\end{lem}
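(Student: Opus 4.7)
My proof will proceed by strong induction on $n \geq N$, with the base case $n = N$ trivial since the right-hand side is bounded above by $\Rad_{p_N}(\hat \gamma_N(p_N))$.

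The key structural observation for the inductive step is that the hypothesis $n - 1 \geq N \geq \bar\tau(p)$ guarantees $d(p_k, B_k) \geq K_1 L_k^{-1 + \eta'}$, and in particular $p_k \notin B_k$, for every $k \geq N$. Hence Case 1 of the foliation construction applies at every step in this range:
\[
\hat\gamma_{k+1}(p_{k+1}) = F_k(\hat\gamma_k(p_k)) \cap \Pc_{k+1}(p_{k+1}).
\]
Since $\hat\gamma_k \geq \Pc_k|_{S_k}$, the atom $\hat\gamma_k(p_k)$ lies inside a single atom of $\Pc_k$, so $F_k(\hat\gamma_k(p_k))$ lies inside a single atom of $F_k(\Pc_k)$. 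Because $\Pc_{k+1} = \Bc_{k+1} \vee F_k(\Pc_k)$, the cut from $\Pc_{k+1}$ that actually subdivides the interior of $F_k(\hat\gamma_k(p_k))$ comes only from $\pd\Bc_{k+1}$. Combining this with the horizontal-expansion estimate of Lemma \ref{lem:forwardsGT} (using $\|h'\|_{C^0} \leq 1/10$ so that arclength and toroidal distance agree up to a factor close to one, absorbed by enlarging $L_0$), I obtain the one-step bound
\[
\Rad_{p_{k+1}}(\hat\gamma_{k+1}(p_{k+1})) \geq \min\bigl\{\, d(p_{k+1}, \pd\Bc_{k+1}),\ L_k^\eta \Rad_{p_k}(\hat\gamma_k(p_k)) \,\bigr\}.
\]

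Applying this at step $n$ leaves two cases. If the minimum is $d(p_n, \pd\Bc_n)$, the conclusion is immediate. Otherwise $\Rad_{p_n}(\hat\gamma_n) \geq L_{n-1}^\eta \Rad_{p_{n-1}}(\hat\gamma_{n-1})$, and I invoke the inductive hypothesis, which bounds $\Rad_{p_{n-1}}(\hat\gamma_{n-1})$ from below by $\min\{d(p_{n-1}, \pd\Bc_{n-1}), \prod_{k=N}^{n-2} L_k^\eta \Rad_{p_N}(\hat\gamma_N)\}$. If this second minimum equals the product, telescoping yields the target bound. If instead it equals $d(p_{n-1}, \pd\Bc_{n-1})$, I use
\[
d(p_{n-1}, \pd\Bc_{n-1}) \geq K_1 L_{n-1}^{-1 + \eta'} = K_1 L_{n-1}^{(\eta - 1)/2}
\]
(valid since $\eta' = (\eta+1)/2$ and $n-1 \geq \bar\tau(p)$) to conclude
\[
L_{n-1}^\eta \Rad_{p_{n-1}}(\hat\gamma_{n-1}) \geq K_1 L_{n-1}^{(3\eta - 1)/2}.
\]
Since $\eta > 1/2$ gives $(3\eta - 1)/2 > 1/4$, for $L_0$ sufficiently large this quantity exceeds $\diam(\T^2)$ and hence $d(p_n, \pd\Bc_n)$, contradicting the standing case assumption $L_{n-1}^\eta \Rad_{p_{n-1}} \leq d(p_n, \pd\Bc_n)$. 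Thus this mixed subcase is vacuous, and the induction closes.

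The main obstacle is precisely this mixed subcase: on its face, a cut already registered by the inductive hypothesis at time $n-1$ could propagate and destroy the clean telescoped lower bound at time $n$. What rescues the argument is the gap $\eta' > \eta$ built into the definition of $\bar\tau$: each post-$\bar\tau$ growth step following a cut gains a net factor $L_k^{(3\eta - 1)/2}$, which under $\eta > 1/2$ is a genuine power of $L_k$ and so eventually exceeds $\diam(\T^2)$, forcing the offending configuration to be impossible. Everywhere else the argument is a direct telescoping of the one-step inequality.
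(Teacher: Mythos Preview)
Your proof is correct and follows essentially the same approach as the paper. Both arguments rest on the identical one-step bound (the paper isolates it as a Claim) derived from the observation that for $k \geq \bar\tau(p)$ only $\pd\Bc_{k+1}$ can cut $F_k(\hat\gamma_k(p_k))$, and both dispose of the intermediate ``cut'' terms using $d(p_k, \pd\Bc_k) \geq K_1 L_k^{-1+\eta'}$ together with $\eta' > \eta$ to make $L_k^\eta \cdot d(p_k,\pd\Bc_k) \gg 1$ for $L_0$ large. The only difference is organizational: the paper iterates the one-step bound directly to obtain a single minimum over $N \leq k \leq n$ and then kills all middle terms at once, whereas you run a strong induction and handle one intermediate term at a time via a case analysis (framing the mixed subcase as vacuous rather than as immediately yielding $\Rad_{p_n} \geq d(p_n,\pd\Bc_n)$, which would have been slightly cleaner).
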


\begin{proof}[Proof of Lemma \ref{lem:goodCurveEstimate}]
The proof leans on the following claim.

\begin{cla}
Let $p \in S_1$ be such that $\bar \tau(p) \leq N <\infty$. Then, for all $n \geq N$, we have
\begin{align*}
\Rad_{p_{n + 1}}(\hat \gamma_{n + 1}(p_{n+1})) \geq \min\{d(p_{n + 1}, \pd \Bc_{n + 1}) , \Rad_{p_n}(F_n(\hat \gamma_{n}(p_n))) \} \, .
\end{align*}
\end{cla}
\begin{proof}[Proof of Claim]
Observe that since $n \geq \bar \tau(p) \geq \tau(p)$, we always use Case 1 in the construction of $\hat \gamma_{n + 1}(p_{n+1})$, i.e., $\hat \gamma_{n + 1}(p_{n+1}) = F_n(\hat \gamma_n(p_n)) \cap \pd \Pc_{n + 1}(p_{n+1})$. Moreover,
 $\hat \gamma_n(p_n) \subset \Pc_n(p_n)$ by construction, hence $F_n(\hat \gamma_n(p_n)) \subset F_n(\Pc_n(p_n))$, and
 so we arrive at
 \[
 \hat \gamma_{n + 1}(p_{n+1}) = F_n(\hat \gamma_n(p_n)) \cap \Bc_{n + 1}(p_{n + 1}) \, .
 \]
The desired estimate now follows.
\end{proof}

Fixing $n \geq N$, we now estimate
\[
\Rad_{p_n}(\hat \gamma_n(p_n)) \geq \min\{ d(p_{n}, \pd \Bc_{n}), \Rad_{p_n}(F_{n-1}(\hat \gamma_{n-1}(p_{n-1}))) \} \, .
\]
Observe that since $d(p_{n-1}, \pd \Bc_{n-1}) \geq K_1 L_{n-1}^{-1 + \eta}$, it follows that 
\[
\Rad_{p_{n }}(F_{n-1}(\hat \gamma_{{n-1}}(p_{n-1}))) \geq  L_{n-1}^{\eta} \Rad_{p_{n-1}}(\hat \gamma_{n-1}(p_{n-1})) \, .
\]
on applying \eqref{eq:expansionPartitionAtoms}. Iterating,
\begin{align*}
\Rad_{p_n}(\hat \gamma_n(p_n)) \geq d(p_n, \Bc_n) \wedge \min_{N \leq k \leq n-1}\bigg\{ \bigg(\prod_{i = k}^{n-1}  L_k^{\eta}\bigg) d (p_k, \pd \Bc_k) \bigg\} \wedge \bigg(\prod_{i = N}^{n-1}  L_i^{\eta}\bigg) \Rad_{p_N} (\hat \gamma_N(p_N)) \, .
\end{align*}
Note however that for $N \leq k \leq n-1$, we have that
\[
d(p_{k}, \pd \Bc_{k}) \geq K_1 L_{k}^{-1 + \eta'} \, ,
\]
hence $ L_k^{\eta} \cdot d(p_k, \pd \Bc_k) \geq K_1 L^{2 (\eta + \eta') - 1} \gg 1$ (recall $\eta > 1/2$ so $ \eta + \eta' - 1 > 2 \eta  - 1> 0$) when $L_0$ is sufficiently large in terms of $K_1, \eta$. This yields the desired estimate.
\end{proof}

\subsubsection*{(C) Final estimates on the tail of $\sigma$}

We are now in position to prove our estimate on $\Leb\{ p \in S_1 : \sigma(p) > 4 n\}$. Assume that
$p \in S_1$ and $\bar \tau(p) \leq n < \infty$; finally, assume $\sigma(p) > 4 n$. From Lemma \ref{lem:goodCurveEstimate} it follows that
\[
\Rad_{p_n}(\hat \gamma_n(p_n)) < K_1 L^{-1 + \eta'}_{4n} \cdot \bigg( \prod_{k = n}^{4n-1}  L_k^{\eta}\bigg)^{-1} \leq \bigg( \prod_{k = n}^{4n-1}  L_k^{\eta}\bigg)^{-1}  
\]
for $L_0$ sufficiently large, since here we always have $d(p_{4n}, \pd \Bc_{4n}) \geq K_1 L_{4n}^{-1 + \eta'}$ by definition of $\bar \tau, \sigma$. Plugging in our estimate from Lemma \ref{lem:badAPrioriEst}, there are two cases to consider:

\noindent {\bf Case (a): } For some $1 \leq k \leq n$, we have
\[
 d( p_k, \pd \Bc_k) < \frac{\prod_{i = k}^{n-1} 2 K_0 L_i}{\prod_{i = n}^{4n-1}  L_i^{\eta} } \, ,
\]
(again the empty product $\prod_{i = n}^{n-1}$ is taken to equal 1) or

\noindent {\bf Case (b): } we have
\[
 d( p_1, \pd S_1) < \frac{\prod_{i = 1}^{n-1} 2 K_0 L_i}{\prod_{i = n}^{4n-1}  L_i^{\eta} } \, .
\]

By volume preservation, it follows that for each $1 \leq k \leq n-1$,
\[
\Leb\bigg\{ \begin{array}{c} p \in S_1 : \tau(p) \leq n \, , \sigma(p) > 4n \, ,\\
 \text{ and Case (a) holds for value $k$} \end{array}
 \bigg\}
 \leq 
 2 \# (\Cc_k) \cdot \frac{\prod_{i = k}^{n-1} 2 K_0 L_i}{\prod_{i = n}^{4n-1}  L_i^{\eta} } \, .
\]
Additionally, using the estimate \eqref{eq:boundaryVolumeEstimate}, we have
\begin{align*}
\Leb\bigg\{ \begin{array}{c} p \in S_1 : \tau(p) \leq n \, , \sigma(p) > 4n \, ,\\
 \text{ and Case (b) holds} \end{array}
 \bigg\}
&\leq
\Leb \bigg\{ p \in S_1 : d(p, \pd S_1) \leq \frac{\prod_{i = 1}^{n-1} 2 K_0 L_i}{\prod_{i = n}^{4n-1}  L_i^{\eta} } \bigg\} \\
& \leq C_S \Leb(S) \frac{\prod_{i = 1}^{n-1} 2 K_0 L_i}{\prod_{i = n}^{4n-1}  L_i^{\eta} } \, .
\end{align*}

Thus
\begin{align}\label{eq:prelimSigmaEst}
\Leb\{ p \in S_1 : \tau(p) \leq n, \sigma(p) > 4n \} \leq \big( 2 n M_0 + C_S \Leb(S) \big) \frac{\prod_{i = 1}^{n-1} 2 K_0 L_i}{\prod_{i = n}^{4n-1}  L_i^{\eta} } \, .
\end{align}
To develop the right-hand side, observe that
\[
\frac{\prod_{i = 1}^{n-1} 2 K_0 L_i}{\prod_{i = n}^{3n-1}  L_i^{\eta}} \leq \prod_{i = 1}^n 2 K_0 L_i^{1 - 2 \eta} \leq 1 \, ,
\]
using that $\{L_i\}$ is a nondecreasing sequence, on taking $L_0$ sufficiently large so that $2 K_0 L_0^{1 - 2 \eta} \leq 1$. For the terms $i = 3n, \cdots, 4n-1$, we estimate:
\[
\prod_{i = 3n }^{4n-1} 2 L_i^{- \eta} 
= \bigg( \prod_{i =3n}^{4n-1}  L_i^{- \eta n}  \bigg)^{1/n} \leq \frac1n \sum_{i = 3n}^{4n - 1}  L_i^{- \eta n} 
\leq \frac1n \sum_{i = 3n}^{4n - 1} L_i^{-1 + \eta}
\]
by AMGM, on noting $ L_n^{- \eta n} < L_n^{-1 + \eta}$ for all $n \geq 1$. Thus
\[
\eqref{eq:prelimSigmaEst} \leq (2 M_0 + C_S \Leb(S)) \sum_{i = 3n}^{4n - 1} L_i^{-1 +\eta} \, .
\]

For the final estimate, observe that
\begin{align*}
\Leb\{ p \in S_1 : \sigma(p) > 4 n\} & \leq \Leb\{ p \in S_1 : \bar \tau(p) \leq n, \sigma(p) > 4 n\} + \Leb\{ p \in S_1 : \bar \tau(p) > n\} \\ 
& \leq (2 M_0 + C_S) \sum_{i = 3n}^{4n - 1} L_i^{-1 + \eta} + 6 K_1 M_0 \sum_{i = n}^\infty L_i^{-1 + \eta'} \\
& \leq 
\big( 2 M_0 + C_S \Leb(S) + 6 K_1 M_0 \big) \sum_{i = n}^\infty L_i^{-1 +  \eta'} 
\end{align*}
on using \eqref{eq:tailBarTau} and that $\{L_i\}$ is nondecreasing. This completes the proof of Proposition \ref{prop:sigmaTail}.

\subsection{Disintegration of Lebesgue measure along horizontal foliation $\hat \gamma_n$}\label{subsubsec:disintegrationNuN}

To complete our description of the foliation $\hat \gamma_n$ of $S_n$, we describe here how $\hat \gamma_n$ disintegrates Lebesgue measure $\nu_n=  F^{n-1}_* \nu_S = \frac{1}{\Leb_{\T^2}(S_n)} \Leb_{\T^2}|_{S_n}$ on $S_n$.
 
%
Below, for $n \geq 1$ and an atom $\gamma \in \hat \gamma_n$, we write $(\nu_n)_\gamma$ for the disintegration measure of $\nu_n$ on $\gamma$; the disintegration measures $(\nu_n)_\gamma$ are the (almost surely) unique family of probability measures, supported on the $\gamma \in \hat \gamma_n$, which satisfy
\[
\nu_n(K) = \int_{\gamma \in S_n / \hat \gamma_n} (\nu_n)_\gamma(\gamma \cap K) \, d \nu_n^T(\gamma) 
\]
for Borel $K \subset \T^2$; here $\nu_n^T$ is the pushforward of $\nu_n$ onto the quotient space of equivalence classes $S_n / \hat \gamma_n$.

\begin{lem}\label{lem:disintegrationDensityDescription}
Let $n \geq 1$ and fix $\gamma \in \hat \gamma_n$. Let $\rho^n_\gamma$ denote the density of $(\nu_n)_\gamma$ with respect to $\Leb_{\gamma}$. Then, for any $p, q \in \gamma$, we have that
\[
\frac{\rho^n_\gamma(p)}{\rho^n_\gamma(q)} = \frac{\det (dF^{n-1}_{n_1 + 1}|_{T \gamma_{n_1+1}}) \circ (F^{n-1}_{n_1 + 1})^{-1}(q) }{\det (dF^{n-1}_{n_1 + 1}|_{T \gamma_{n_1+1}}) \circ (F^{n-1}_{n_1 + 1})^{-1}(p)}
\]
Here $n_1 =   \max \big( \{ 0 \} \cup \{ 1 \leq k \leq n -1 : p_k \in B_k \} \big) $, $p_n \in \gamma$ is an (arbitrary) representative and $p_k \in S_k$ is such that $F_k^{n-1} p_k = p_n$ for each $k \leq n$, and $\gamma_{n_1+1}$ is the atom in $\hat \gamma_{n_1+1}$ for which $F^{n-1}_{n_1+1}(\gamma_{n_1+1}) \supset \gamma$.
\end{lem}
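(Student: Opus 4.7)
The key observation is that $n_1+1$ is, by its definition, the last time before $n$ at which the ``reset'' Case 2 of the foliation algorithm is used along the orbit of $p_n$, so for each $n_1+1 \leq k \leq n-1$ the atom $\hat \gamma_{k+1}(p_{k+1})$ is produced via Case 1 as $F_k(\hat \gamma_k(p_k)) \cap \Pc_{k+1}(p_{k+1})$. A routine induction on $k$ then shows that $\gamma = \hat \gamma_n(p_n)$ is a sub-arc of $F^{n-1}_{n_1+1}(\gamma_{n_1+1})$, and that $F^{n-1}_{n_1+1}$ restricts to a smooth bijection from the sub-arc $(F^{n-1}_{n_1+1})^{-1}(\gamma) \subset \gamma_{n_1+1}$ onto $\gamma$. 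The density computation then reduces to (i) a base-case disintegration at time $n_1+1$, together with (ii) a one-dimensional change of variables along this bijection.

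For the base case: by the algorithm, either $n_1 \geq 1$, in which case $p_{n_1} \in B_{n_1}$ and Case 2 was invoked at step $n_1+1$, or $n_1 = 0$ and one looks at $\hat \gamma_1$ directly. In both situations the relevant atoms of $\hat \gamma_{n_1+1}$ have the form $H_q \cap \Pc_{n_1+1}(q)$, i.e., horizontal line segments cut by the partition $\Pc_{n_1+1}$. A direct Fubini computation shows that $\nu_{n_1+1}$ disintegrates along this horizontal foliation into the normalized one-dimensional Lebesgue measure on each segment; in particular the density $\rho^{n_1+1}_{\gamma_{n_1+1}}$ with respect to $\Leb_{\gamma_{n_1+1}}$ is constant on $\gamma_{n_1+1}$.

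For the pushforward step, consistency of disintegration under the (volume-preserving) diffeomorphism $F^{n-1}_{n_1+1}$ implies that $(\nu_n)_\gamma$ is equal, up to normalization, to the pushforward by $F^{n-1}_{n_1+1}|_{\gamma_{n_1+1}}$ of the restriction of $(\nu_{n_1+1})_{\gamma_{n_1+1}}$ to $(F^{n-1}_{n_1+1})^{-1}(\gamma)$. The standard one-dimensional change-of-variables formula along the curve then gives
\[
\rho^n_\gamma \circ F^{n-1}_{n_1+1}(\hat q) \;=\; C \cdot \frac{\rho^{n_1+1}_{\gamma_{n_1+1}}(\hat q)}{\det\bigl(dF^{n-1}_{n_1+1}|_{T \gamma_{n_1+1}}\bigr)(\hat q)} ,
\]
where $C > 0$ is a normalization constant (which cancels upon taking ratios). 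Using the constancy of $\rho^{n_1+1}_{\gamma_{n_1+1}}$ from the base step and evaluating at $\hat q = (F^{n-1}_{n_1+1})^{-1}(p)$ and $\hat q = (F^{n-1}_{n_1+1})^{-1}(q)$, the ratio $\rho^n_\gamma(p)/\rho^n_\gamma(q)$ collapses to the stated formula.

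The only real subtlety will be making the pushforward-of-disintegration step rigorous: one must check that, when restricted to $F^{n-1}_{n_1+1}(\gamma_{n_1+1})$, the partition $\hat \gamma_n$ is obtained from $\hat \gamma_{n_1+1}|_{\gamma_{n_1+1}}$ by pushing forward and then cutting with the successive $\Pc_{n_1+2}, \ldots, \Pc_n$, and that the resulting conditional probabilities transform as expected under the (piecewise-constant-along-atoms) pushforward. Given the explicit iterative construction and measurability of the $\hat \gamma_k$, this is essentially bookkeeping, and no further hyperbolic estimates beyond those already developed in \S\ref{sec:preliminaries} are required.
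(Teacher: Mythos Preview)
Your proposal is correct and follows essentially the same approach as the paper: identify $n_1+1$ as the last ``reset'' time, observe that the disintegration density there is constant (normalized arc-length on a horizontal segment), and then transport to time $n$ by a one-dimensional change of variables along the curve. The only organizational difference is that the paper carries out the transport step-by-step, deriving the one-step relation
\[
\frac{\rho^{k+1}_\gamma(p)}{\rho^{k+1}_\gamma(q)}
= \frac{\det(dF_k|_{T\check\gamma})\circ F_k^{-1}(q)}{\det(dF_k|_{T\check\gamma})\circ F_k^{-1}(p)}
\cdot \frac{\rho^k_{\check\gamma}\circ F_k^{-1}(p)}{\rho^k_{\check\gamma}\circ F_k^{-1}(q)}
\]
and then iterating from $k=n_1+1$ to $n-1$, whereas you push forward by $F^{n-1}_{n_1+1}$ in one shot and absorb the successive $\Pc_k$-cuts into a single normalization constant. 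Both routes rely on the same two ingredients (constant density at the reset time, and the Jacobian of the restricted map governing the pushforward of arc-length), and the ``bookkeeping'' subtlety you flag---that cutting by $\Pc_k$ only restricts and renormalizes, hence cancels in ratios---is exactly what the paper's inductive formula encodes.
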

\begin{proof}

To start let us describe the disintegration measures $(\nu_1)_{\hat \gamma_1(p_1)}$ for $p_1 \in S_1$. It is clear that
\begin{align}\label{eq:disintegrationMeasTime1}
(\nu_1)_{\hat \gamma_1(p)} =  \frac{1}{\Leb_{H_{p_1}} (\hat \gamma_1(p_1))}\Leb_{H_{p_1}}|_{\hat \gamma_1(p_1)} \, ,
\end{align}
where $H_{p_1}$ is as in \S  \ref{subsubsec:constructHatGamma} and $\Len(\gamma)$ denotes the arc length of a smooth connected curve $\gamma \subset \T^2$. Thus Lemma \ref{lem:disintegrationDensityDescription} holds trivially in this case with $n_1 = 1$.

Inductively, let us express the disintegration $\nu_{n+1}$ in terms of that for $\nu_n$. Observe that
\[
\nu_{n+1} = (F_n)_* \nu_n|_{S_n \cap B_n} + (F_n)_* \nu_n|_{S_n \setminus B_n} \, ;
\]
since $S_n \cap B_n, S_n \setminus B_n \in \Pc_n$ it suffices to consider these separately in working out the disintegration measures $(\nu_{n+1})_{\gamma}, \gamma \in \hat \gamma_{n+1}$.

On $F_n(S_n \cap B_n)$, Case 2 is applied in constructing $\hat \gamma_{n+1}|_{F_n(S_n \cap B_n)}$, and so disintegration measures are obtained using the analogue of \eqref{eq:disintegrationMeasTime1} with $n+1$ replacing $1$. 

On $F_n(S_n \setminus B_n)$, we apply Case 1 in the construction of $\hat \gamma_{n+1}$, i.e., $\hat \gamma_{n+1} = \Pc_{n+1} |_{F_n(S_n \cap B_n)} \vee F_n(\hat \gamma_n|_{S_n \cap B_n})$. In particular, the disintegration $(\nu_{n+1}|_{F_n(S_n \setminus B_n)})_\gamma, \gamma \in \hat \gamma_{n+1}$ can be obtained by disintegrating, for each $\check \gamma \in \hat \gamma_n$, the measures $(F_n)_* \big( (\nu_n)_{\check \gamma} \big)$ against the (finite) partition $\Pc_{n+1}|_{F_n(\check \gamma)}$. To wit, if $\gamma \in \hat \gamma_{n+1}|_{F_n(S_n \setminus B_n)}$ has $\gamma \subset F_n(\check \gamma)$ for $\check \gamma \in \hat \gamma_{n+1}$, then
\begin{align*}
(\nu_{n+1})_\gamma &= \frac{1}{(\nu_n)_{\check \gamma} (F_n^{-1}  \gamma) } (F_n)_* \big( (\nu_n)_{\check \gamma} \big) |_{\gamma} \, .
\end{align*}
In particular, we have shown that for any $p, q \in \gamma$, we have that
\[
\frac{\rho^{n+1}_\gamma(p)}{\rho_\gamma^{n+1}(q)} = \frac{\det(d F_n|_{T \check \gamma}) \circ F_n^{-1}(q)}{\det(d F_n|_{T \check \gamma}) \circ F_n^{-1}(p)} \cdot \frac{\rho^n_{\check \gamma} \circ F_n^{-1}(p)}{\rho^n_{\check \gamma} \circ F_n^{-1} (q) } \, .
\]
Lemma \ref{lem:disintegrationDensityDescription} follows by iterating the above relations from $n_1 + 1$ to $n-1$.
\end{proof}

\subsection{Description of $(F^n)_* \nu_S$}\label{subsec:propertiesPartitionGc}

Here we synthesize the results of \S \ref{subsubsec:constructHatGamma} -- \ref{subsubsec:disintegrationNuN} into our main result, a precise description of the bulk of $(F^n)_* \nu_S$ 
as foliated by a collection of fully crossing horizontal curves with controlled disintegration densities.

\begin{prop}\label{prop:collectionFullyCrossingCurves}
Let $n \geq 2$. Then, there is a measurable set $G \subset F^n S$ and a measurable partition $\Gc$ of $G$ with the following properties.
\begin{itemize}
\item[(a)] Each atom $\gamma \in \Gc$ is of the form $\graph h_\gamma$ where $h_\gamma : (0,1) \to \T^1$ is a $C^2$, fully crossing horizontal curve with $\| h_\gamma'\|_{C^0} = O(L_n^{- \eta})$.
\item[(b)] We have the estimate 
\begin{align}\label{eq:nuNestimateG}\begin{split}
\nu_{n+1}(G) & \geq 1 - O(L_n^{- \frac12(1 - \eta)}) - \nu_S \{ \sigma > n \}\\
& \geq 1 - \bigg(O(1) + C_S + \frac{C }{\Leb(S)} \bigg) \sum_{i = \lfloor n/4 \rfloor }^\infty L^{-\frac12(1 - \eta)}_i 
\end{split} \end{align}
on plugging in the estimate in Proposition \ref{prop:sigmaTail}.
\item[(c)] Let $\nu_G$ denote the restriction $\nu_{n+1}|_{G}$ and let $\{(\nu_G)_\gamma \}_{\gamma \in \Gc}$ denote the canonical disintegration of $\nu_G$ with respect to $\Gc$ by probability measures supported on each $\gamma \in \Gc$. Let $\rho_\gamma : \gamma \to [0,\infty)$ denote the density of $(\nu_G)_\gamma$ with respect to $\Leb_\gamma$. Then for any $p_1, p_2 \in \gamma$ we have
\[
\frac{\rho(p_1)}{\rho(p_2)} \leq e^{C L_n^{1 - 2 \eta}} \, .
\]
\end{itemize}
\end{prop}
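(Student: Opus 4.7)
The strategy is to take $G_0 := \{p \in S : \sigma_S(p) \leq n\}$, let $\Gc$ consist of the fully-crossing atoms of $\hat\gamma_{n+1}$ lying in $F^n(G_0)$, and set $G := \bigcup_{\gamma \in \Gc} \gamma$. For $p \in G_0$ with $k := \sigma_S(p)$, the atom $\hat\gamma_k(p_k)$ has horizontal extent $\geq 2 K_1 L_k^{-1+\eta'}$ and the subsequent orbit avoids every $B_m$, $m \geq k$, since $\sigma_S \geq \bar\tau$. Consequently $F_k(\hat\gamma_k(p_k))$ has extent $\gtrsim L_k^{\eta+\eta'-1} \gg 1$; cutting by $\Pc_{k+1}$ yields many fully-crossing pieces plus two short end segments, and iterating up to time $n+1$ (with the control furnished by Lemma \ref{lem:forwardsGT}) ensures that the bulk of $\hat\gamma_{n+1}|_{F^n(G_0)}$ is composed of fully-crossing atoms. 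Property (a) is then immediate from Lemma \ref{lem:forwardsGT}(a), which yields the slope bound $\|h_\gamma'\|_{C^0} \leq L_n^{-\eta}$ at time $n+1$.

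For property (b), volume preservation gives $\nu_{n+1}(F^n(G_0^c)) = \nu_S\{\sigma_S > n\}$, bounded by Proposition \ref{prop:sigmaTail}. The residual loss $\nu_{n+1}(F^n(G_0) \setminus G)$ comes from the accumulated non-fully-crossing end-segments of the iterative cutting procedure. A careful accounting of the fractional loss per step --- on the order of a power of $L_m$ at step $m+1$ --- combined with the fact that $\{L_m\}$ is nondecreasing (so the dominant contribution comes from $m$ close to $n$) yields the $O(L_n^{-(1-\eta)/2})$ correction in the stated form of (b).

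For (c), Lemma \ref{lem:disintegrationDensityDescription} reduces the ratio $\rho_\gamma(p_1)/\rho_\gamma(p_2)$ to the reciprocal ratio of the horizontal Jacobians $\det(dF^n_{n_1+1}|_{T\gamma_{n_1+1}})$ evaluated at the two preimages in $\hat\gamma_{n_1+1}$, where $n_1$ records the last visit to a bad set. Invoking Lemma \ref{lem:distControlHorizontalCurves} (or equivalently Lemma \ref{lem:multMixingPrep}(c)) then controls this ratio. The main obstacle is matching the specific form $e^{C L_n^{1-2\eta}}$ stated in (c): a direct application of Lemma \ref{lem:distControlHorizontalCurves} from time $n_1+1$ to $n+1$ produces a prefactor $L_{n_1+1}^{1-2\eta}$, which is looser than $L_n^{1-2\eta}$ whenever $n_1 \ll n$ (since $1-2\eta < 0$). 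Obtaining the sharper target bound likely requires further refining $\Gc$ so that each atom is a one-step image under $F_n$ of a preimage sub-segment of horizontal extent $\leq L_n^{-\eta}$ at time $n$ --- letting the length factor in Lemma \ref{lem:distControlHorizontalCurves} absorb the excess prefactor --- or equivalently iterating the single-step distortion estimate $1 + O(L_m^{1-2\eta})$ from Lemma \ref{lem:multMixingPrep}(c) and exploiting the geometric summability of the resulting series for $\eta \in (1/2,1)$. Handling this final telescoping step cleanly is the principal technical difficulty.
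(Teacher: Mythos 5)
The basic strategy (restrict to $\{\sigma_S \le n\}$, foliate by horizontal curves, control the loss and the densities) is the same as the paper's, but there are two concrete gaps.

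First, your proposed $\Gc$ — the ``fully-crossing atoms of $\hat\gamma_{n+1}$'' — is empty. By construction $\hat\gamma_{n+1} \ge \Pc_{n+1}$, and $\Pc_{n+1}$ refines $\Bc_{n+1}$, the partition into connected components of $B_{n+1}$ and $B_{n+1}^c$, all of which are proper vertical cylinders. So every atom of $\hat\gamma_{n+1}$ is trapped in such a cylinder and cannot span the full horizontal extent. The paper instead defines $\hat\Gc_n$ as those $\hat\gamma\in\hat\gamma_n$ (time $n$, not $n+1$) charging $F^{n-1}\{\sigma\le n\}$, applies Lemma~\ref{lem:partitionSaturation} to transfer $\nu_S\{\sigma\le n\}$ to the transversal measure, and then takes $\Gc := \bigcup_{\hat\gamma\in\hat\Gc_n}\bar\Gamma_n(\hat\gamma)$ using the cutting algorithm of Lemma~\ref{lem:multMixingPrep} — which, unlike the $\Pc$-cutting used to build $\hat\gamma_{n+1}$, does produce fully-crossing curves. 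Your $\Gc$ needs to be replaced by this (or an equivalent) construction; this also explains why $G$ is a union of one-step images $F_n(\hat\gamma\setminus\Bc_n(\hat\gamma))$, a structural fact used in both (b) and (c).

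Second, your treatments of (b) and (c) are incomplete. For (b), you invoke a ``careful accounting of the fractional loss per step,'' but this iterative accounting is both unnecessary and, as stated, ungrounded: before time $\bar\tau$ the per-step loss along a curve can be large, so a step-by-step series is not obviously geometrically summable. The paper avoids this entirely: the only loss (beyond $\{\sigma>n\}$) is $\Bc_n(\hat\gamma)$ at the single step $n\to n+1$, and the bound $O(L_n^{-(1-\eta)/2})$ follows from $(\nu_n)_{\hat\gamma}(\Bc_n(\hat\gamma)) \le \frac{C}{\Len(\hat\gamma)}\Leb_{\hat\gamma}(\Bc_n(\hat\gamma))$ together with $\Len(\hat\gamma)\ge K_1 L_n^{-1+\eta'}$, which holds precisely because $\hat\gamma$ meets $F^{n-1}\{\sigma\le n\}$. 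For (c), you correctly identify the difficulty and even sketch the right fix (factor off the single step $F_n$, exploit that the preimage under $F_n$ has length $\lesssim L_n^{-\eta}$), but you stop short. The paper completes it: with $\gamma\in\bar\Gamma_n(\hat\gamma)$ one writes $\rho_\gamma(p_1)/\rho_\gamma(p_2)$ as the $F_n$-Jacobian ratio times $\rho^n_{\hat\gamma}(F_n^{-1}p_1)/\rho^n_{\hat\gamma}(F_n^{-1}p_2)$; the first factor is $\le e^{C L_n^{1-2\eta}\|p_1-p_2\|}$ by Lemma~\ref{lem:distControlHorizontalCurves}, and the second is $\le e^{C L_{n_1}^{1-2\eta}L_n^{-\eta}\|p_1-p_2\|}\le e^{CL_n^{-\eta}}$ because the preimages are within $L_n^{-\eta}\|p_1-p_2\|$ of each other and $L_{n_1}^{1-2\eta}\le 1$ for $\eta>1/2$ and $L_0$ large. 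The $e^{CL_n^{1-2\eta}}$ bound then dominates. This is exactly what you guessed should happen; the missing step is that the correct $\Gc$ (via $\bar\Gamma_n$) already has the ``one-step image of a short segment'' structure you were looking for, so no further refinement of $\Gc$ is needed.
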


\begin{proof}
To start, define
\[
\hat \Gc_n = \{ \hat \gamma \in \hat \gamma_n :  (\nu_n)_{\hat   \gamma} F^{n-1} \{ \sigma \leq n \} > 0\}  \, 
\quad \text{and} \quad
\hat G_n = \bigcup_{\hat \gamma \in \hat \Gc_n} \hat \gamma \, .
\]
By Lemma \ref{lem:partitionSaturation} in the appendix, we have
\[
\nu_n( \hat G_n) = \nu_n^T \{ \hat \gamma \in \hat \Gc_n\} \geq  \nu_{1} \{ \sigma \leq n\} \, .
\]

Recalling the notation in Lemma \ref{lem:multMixingPrep}, we define $\Gc$ by
\[
\Gc = \bigcup_{\hat \gamma \in \hat \Gc_n} \bar \Gamma_n(\hat \gamma) \, \quad \text{and} \quad
G = \bigcup_{\gamma \in \Gc} \gamma = \bigcup_{\hat \gamma \in \hat \Gc_n} F_n(\hat \gamma \setminus \Bc_n(\hat \gamma)) \, ,
\]
noting that $\Gc$ partitions $G$ into horizontal curves $\gamma$ which satisfy item (a) by construction.

\medskip

To check item (b), for each $\hat \gamma \in \hat \gamma_n$ and subset $K \subset \hat \gamma$ we have that
\[
(\nu_n)_{\hat \gamma} (K) \leq \frac{C}{\Len(\hat \gamma)} \,  \Leb_{\hat \gamma}(K) \, ,
\]
on applying the distortion estimate in Lemma \ref{lem:distControlHorizontalCurves} to the density $\rho^n_{\hat \gamma}$ derived in Lemma \ref{lem:disintegrationDensityDescription}. Since $\Len(\hat \gamma)^{-1} = O(L_n^{1 - \eta'})$ from the fact that $\hat \gamma \cap F^{n-1} \{ \sigma \leq n \} \neq \emptyset$, we obtain the estimate
\[
(\nu_n)_{\hat \gamma} (\Bc_n(\hat \gamma)) = O\bigg( \frac{L_n^{-1+\eta}}{L_n^{-1 + \eta'}} \bigg) = O(L_n^{- \frac12 (1 - \eta)})
\]
on plugging in $K = \Bc_n(\hat \gamma)$. Thus \eqref{eq:nuNestimateG} follows on noting $-1 + \eta' = -1 + \frac{1 + \eta}{2} = \frac{\eta - 1}{2}$.

\medskip

For item (c), let $p_1, p_2 \in \gamma$ for some $\gamma \in \Gc$, and assume that $\gamma \in \bar \Gamma_n(\hat \gamma)$ for $\hat \gamma \in \hat \gamma_n$. Then,
\[
\frac{\rho_\gamma(p_1)}{\rho_\gamma(p_2)} = \frac{\det (dF_n|_{T \hat \gamma}) \circ F_n^{-1}(p_2)}{\det (dF_n|_{T \hat \gamma}) \circ F_n^{-1}(p_1)} \cdot \frac{\rho^n_{\hat \gamma} \circ F_n^{-1}(p_1)}{\rho^n_{\hat \gamma} \circ F_n^{-1} (p_2)} 
\]
in the notation of \S \ref{subsubsec:disintegrationNuN}. The first factor is bounded $\leq e^{C L_n^{1 - 2 \eta} \| p_1 - p_2\|}$ by Lemma \ref{lem:distControlHorizontalCurves}. For the second factor, note that $\| F_n^{-1}(p_1) - F_n^{-1} (p_2)\| \leq L_n^{- \eta} \| p_1 - p_2 \|$ by Lemma \ref{lem:forwardsGT}, and so Lemma \ref{lem:disintegrationDensityDescription} yields the estimate $\leq e^{C L_{n_1}^{1 - 2 \eta} \cdot L_n^{- \eta} \| p_1 - p_2 \|} \leq e^{C L_n^{- \eta}}$. The estimate in item (c) follows.
\end{proof}

\section{Decay of correlations estimates}\label{sec:decayCorrelations}

\renewcommand{\phi}{\varphi}

Leaning on the mixing mechanism explored in the previous section, we complete here the proof of Theorem \ref{thm:decayCorrelations}. 

In \S \ref{subsec:docReduction}, we will show how to reduce Theorem \ref{thm:decayCorrelations} to the case when $\phi$ is the characteristic function of a small square (Proposition \ref{prop:rectangleDecayCorrelations}). In \S \ref{subsec:completeDOC} we apply the results of \S \ref{sec:shapeSet} when $S$ is a small square and give the proof of Proposition \ref{prop:rectangleDecayCorrelations}.

\bigskip

{
\it \noindent We assume throughout \S \ref{sec:decayCorrelations} that $\eta \in (1/2, 1)$ has been fixed, and that $\{ L_n \}$ has the property that $\sum_n L_n^{-1 + \eta'} < \infty$, where $\eta' = \frac{\eta + 1}{2}$ is as in \S \ref{subsubsec:preparationsShapeSet}. These assumptions are consistent with the hypotheses of Theorem \ref{thm:decayCorrelations}.
}
\subsection{Reduction}\label{subsec:docReduction}

We will show here that to prove Theorem \ref{thm:decayCorrelations}, it suffices to prove the following. 
\begin{prop}\label{prop:rectangleDecayCorrelations}
Let $R$ be a square in $\T^2$ of side length $\ell$, and let $\nu$ denote the normalized Lebesgue measure restricted to $R$. Let $\psi : \T^2 \to \R$ be $\a$-Holder continuous. Then
\[
\bigg| \int \psi \circ F^{n} \, d \nu - \int \psi \bigg| \leq C \| \psi \|_\a  \max \bigg\{ L_{\lfloor n/2 \rfloor}^{- \min\{ 2 \eta -1, \a (1 - \eta) /(\a + 2) \}} , \ell^{-2} \sum_{i = \lfloor n/8 \rfloor}^\infty L_i^{-\frac12 (1 - \eta)} \bigg\} \, .
\]
\end{prop}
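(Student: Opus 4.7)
Set $m = \lfloor n/2 \rfloor$ and decompose the correlation by pushing forward:
\[
\int \psi \circ F^n \, d\nu \;=\; \int \psi \circ F_{m+1}^n \, d\nu_{m+1}, \qquad \nu_{m+1} := (F^m)_* \nu.
\]
The plan is to apply Proposition \ref{prop:collectionFullyCrossingCurves} (with $S = R$) to obtain, up to a small error, a foliation of the support of $\nu_{m+1}$ by fully crossing horizontal curves, and then apply the curve-correlation estimate of Proposition \ref{prop:CLTdocCurves} on each leaf. For the square $R$ of side length $\ell$, one has $\Leb(R) = \ell^2$ and the boundary estimate \eqref{eq:boundaryVolumeEstimate} holds with $C_S = O(1/\ell)$, so the error estimate in item (b) of Proposition \ref{prop:collectionFullyCrossingCurves} gives a good set $G \subset F^m R$ and partition $\Gc$ of $G$ into fully crossing horizontal curves with
\[
\nu_{m+1}(G^c) \;\leq\; C\,\ell^{-2} \sum_{i = \lfloor m/4 \rfloor}^\infty L_i^{-\frac12(1-\eta)}
\;\leq\; C \ell^{-2} \sum_{i = \lfloor n/8\rfloor}^\infty L_i^{-\frac12(1-\eta)}.
\]
This produces the second term in the max on the right-hand side of the proposition.

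Next I would split $\int \psi \circ F_{m+1}^n \, d\nu_{m+1}$ into the contributions from $G$ and $G^c$. The $G^c$ contribution is at most $\|\psi\|_{C^0} \nu_{m+1}(G^c)$, absorbed into the tail-sum term above. On $G$, disintegrate $\nu_{m+1}|_G$ against the partition $\Gc$ and use item (c) of Proposition \ref{prop:collectionFullyCrossingCurves}: the density $\rho_\gamma$ of $(\nu_G)_\gamma$ with respect to $\Leb_\gamma$ is constant on $\gamma$ up to a factor $1 + O(L_m^{1-2\eta})$, while $\Len(\gamma) = 1 + O(L_m^{-\eta})$ since $\gamma$ is fully crossing with $\|h_\gamma'\|_{C^0} = O(L_m^{-\eta})$. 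Hence
\[
\int_\gamma \psi \circ F_{m+1}^n \, d(\nu_G)_\gamma
\;=\; \bigl(1 + O(L_m^{1-2\eta})\bigr)\, \frac{1}{\Len(\gamma)} \int_\gamma \psi \circ F_{m+1}^n \, d\Leb_\gamma.
\]

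For each such $\gamma$, I would invoke Proposition \ref{prop:CLTdocCurves} with lower index $m+1$ and upper index $n$ to obtain
\[
\left| \frac{1}{\Len(\gamma)} \int_\gamma \psi \circ F_{m+1}^n \, d\Leb_\gamma - \int \psi \right|
\;\leq\; C \|\psi\|_\a \biggl( L_n^{-\frac{\a(1-\eta)}{\a+2}} + L_{m+1}^{1-2\eta} + \sum_{k=m+1}^{n-1} L_k^{-1+\eta} \biggr).
\]
Since $L_n \geq L_m$ and $-1+\eta \leq -\tfrac12(1-\eta)$, all three terms are dominated by
\[
C\,\bigl(L_m^{-\min\{2\eta-1,\; \a(1-\eta)/(\a+2)\}} + \textstyle\sum_{k\geq m+1} L_k^{-\frac12(1-\eta)}\bigr),
\]
which fits under the first term of the max in the proposition (with the tail sum again absorbed into the second term). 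Integrating over $\gamma \in \Gc$ with respect to the transverse measure $\nu_G^T$, using $\nu_{m+1}(G) = 1 - \nu_{m+1}(G^c)$ to reconstruct $\int \psi$, and combining with the $G^c$ contribution finishes the proof.

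The step likely to require the most care is the quantitative bookkeeping: verifying that the Holder-length scale $\ell^{-2}$ arises precisely from $C_S/\Leb(S) + 1/\Leb(S) = O(\ell^{-2})$ in item (b) of Proposition \ref{prop:collectionFullyCrossingCurves}, and matching the shifted indices $\lfloor m/4 \rfloor$, $m+1$, $n$ with the asserted $\lfloor n/2 \rfloor$ and $\lfloor n/8 \rfloor$ under the choice $m = \lfloor n/2 \rfloor$. Everything else is a direct combination of the two input propositions.
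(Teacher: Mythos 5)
Your proposal is correct and follows essentially the same strategy as the paper: push $\nu$ forward to time $\lfloor n/2 \rfloor$, apply Proposition \ref{prop:collectionFullyCrossingCurves} to obtain the foliation $\Gc$ of the good set $G$ by fully crossing horizontal curves with the $\ell^{-2}$-weighted tail bound on $\nu_{m+1}(G^c)$, then on each leaf apply Proposition \ref{prop:CLTdocCurves} over the remaining $n-m$ steps and collect errors. The only cosmetic difference is bookkeeping: the paper substitutes $n \mapsto 2n$ up front to avoid floor functions, while you work directly with $m = \lfloor n/2 \rfloor$ and use $\lfloor \lfloor n/2\rfloor /4\rfloor = \lfloor n/8\rfloor$ — both are fine, and your explicit handling of the $\Len(\gamma)$ normalization is slightly more careful than the paper's (which absorbs it silently into the $O(L_n^{1-2\eta})$ term, noting $\Len(\gamma) = 1 + O(L_n^{-2\eta})$).
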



\begin{proof}[Proof of Theorem \ref{thm:decayCorrelations} assuming Proposition \ref{prop:rectangleDecayCorrelations}]
Below, $n \geq 2$ is fixed, as are $\a$-Holder continuous $\phi, \psi : \T^2 \to \R$. Let us write $\Sc_n$ for the first element in the $\max\{ \cdots \}$ in Proposition \ref{prop:rectangleDecayCorrelations} and write $\mathcal T_n$ for the summation in the second term, so that the bound on the right-hand side reads as $\leq C \| \psi \|_\a \max \{ \Sc_n, \ell^{-2} \mathcal T_n\}$.

With $K \in \N$ to be specified later, subdivide $\T^2$ into rectangles $R_{i,j}, 1 \leq i,j \leq K$ of side length $\ell = 1/K$ each. We set
\[
 \varphi_{i,j} = \inf_{p \in R_{i,j}}  \varphi(p) \, .
\]
Define $\hat \varphi := \sum_{i,j = 1}^K  \varphi_{i,j} \chi_{R_{i,j}}$, so that
\[
\int \big(  \varphi -  \hat \phi \big) d \Leb = O( \| \varphi\|_\a \cdot \ell^{\a}) \, .
\]

Let $\nu^{i,j}$ denote normalized Lebesgue measure on $R_{i,j}$. Then
\[
\int \psi \circ F^n \cdot \phi  = \int \psi \circ F^n \cdot \big( \phi - \hat \phi \big) 
 +  \sum_{i,j = 1}^K \ell^{2} \phi_{i,j}  \int \psi \, d F^n_* \nu^{i,j}  \, . 
\]
For the first term,
\[
\int \psi \circ F^n \cdot (\phi - \hat \phi) = O( \| \psi \|_\a \|\phi\|_\a \ell^\a) \, .
\]
Similarly, we estimate
\begin{align*}
\int \psi \cdot  \int \phi & = \int (\phi - \phi_k)  \cdot  \int \psi  + \sum_{i,j = 1}^{K} \ell^2 \phi_{i,j} \int \psi  \\
&= O( \| \psi \|_\a \|\phi\|_\a \ell^\a) + \sum_{i,j = 1}^K \ell^2 \phi_{i,j} \int \psi 
\end{align*}
hence
\begin{align*}
\bigg| \int \psi \circ F^n \cdot \psi  - \int \psi \int \phi \bigg| & \leq \sum_{i,j = 1}^{K} \ell^2 \phi_{i,j} \bigg| \int \psi \, d F^n_* \nu^{i,j} - \int \psi \bigg| \\
& +O( \| \psi \|_{C^0} [\phi]_\a \ell^\a) \\
& =  \| \psi \|_\a \| \phi \|_\a \cdot O\big( \Sc_n + \ell^{-2}  \mathcal T_n +   [\phi]_\a \ell^\a \big) \, .
%
\end{align*}
Setting
\[
K = \bigg\lfloor \bigg( \frac{ [\phi]_\a}{ \mathcal T_n }   \bigg)^{1/(2 + \a)} \bigg\rfloor 
\]
we obtain the estimate
\[
\bigg| \int \psi \circ F^n \cdot \phi  - \int \phi \int \psi  \bigg| \leq C \| \psi \|_\a \|\phi\|_\a^{\frac{4 + \a}{2 + \a}} (  \mathcal T_n^{\frac{\a}{2 + \a}} + \Sc_n) \, .
\]

The only difference between this and our desired estimate is the exponent of $\| \phi \|_\a$ on the right-hand side. To fix this, define $\check \phi = \phi / \| \phi \|_\a$ and note $\| \check \phi \|_\a = 1$; for this function we have
\[
\bigg| \int \psi \circ F^n \cdot \check \phi  - \int \check \phi \int \psi  \bigg| \leq C \| \psi \|_\a ( \mathcal T_n^{\frac{\a}{2 + \a}} + \Sc_n) \, ,
\]
and so the desired estimate follows on multiplying both sides by $\| \phi \|_\a$. To complete the proof, observe that 
\[
\max \{ \Sc_n, {\mathcal T}_n^{\frac{\a}{\a + 2}}\} \leq \max \bigg\{ L_{\lfloor n / 2 \rfloor}^{1 - 2 \eta}, \bigg( \sum_{i = \lfloor n / 8 \rfloor} L_i^{- \frac12 (1 - \eta)} \bigg)^{\frac{\a}{\a + 2}} \bigg\}
\]
since ${\mathcal T}_n^{\frac{\a}{\a + 2}}$ always dominates $L_{\lfloor n / 2 \rfloor}^{- \a(1 - \eta) / (\a + 2)}$.
\end{proof}


\subsection{Proof of Proposition \ref{prop:rectangleDecayCorrelations}}\label{subsec:completeDOC}

To complete the proof of Theorem \ref{thm:decayCorrelations}, it remains to prove Proposition \ref{prop:rectangleDecayCorrelations}. We combine the description in Proposition \ref{prop:collectionFullyCrossingCurves} of the foliation by long horizontal curves with the mixing estimate in Proposition \ref{prop:CLTdocCurves} along those horizontal curves.

To wit: let $\psi : \T^2 \to \R$ be $\a$-Holder continuous and let $R$ be a square of side length $\ell$ as in the statement of Proposition \ref{prop:rectangleDecayCorrelations}. With $\nu$ denoting the Lebesgue measure restricted to $R$, and (for notational convenience) appling the substitution $n \mapsto 2n$, we will estimate
\begin{align}\label{eq:2nMixEst}
\int \psi \circ F^{2n} \, d \nu = \int \psi \circ F^{2n}_{n+1} \, d (F^n_* \nu) \, .
\end{align}

For each $k \geq 1$ define $\nu_k = F^{k-1}_* \nu_1$, where $\nu_1 = \nu$. Applying Proposition \ref{prop:collectionFullyCrossingCurves} to $S = R$, we obtain the collection $\Gc$ of horizontal curves foliating the set $G \subset F^n R$. In the notation of Proposition \ref{prop:sigmaTail}, we have $C_R = O(\ell^{-1})$, and so 
\[
\nu_{n+1}(G^c) = 
O \bigg( \ell^{-2} \sum_{i = \lfloor n/4 \rfloor}^\infty L_i^{-\frac12(1 + \eta)} \bigg) \, .
\]

Returning to the estimate of \eqref{eq:2nMixEst},
\begin{align*}
\eqref{eq:2nMixEst} & = O( \| \psi\|_\a \, \nu_{n+1}(G^c) ) + \int \psi \circ F^{2n}_{n+1} \, d \nu_G \\
& = O( \| \psi\|_\a \, \nu_{n+1}(G^c) ) + \int_{G / \Gc} \bigg( \int_{\gamma} \psi \circ F^{2n}_{n+1} \, d (\nu_G)_\gamma \bigg) \, d \nu_G^T \, , 
\end{align*}
where the transversal measure $\nu_G^T$ is the pushforward of $\nu_G$ onto $G / \Gc$.

Fixing $\gamma \in \Gc$, we have by the density estimate in Proposition \ref{prop:collectionFullyCrossingCurves} that
\[
\int_{\gamma} \psi \circ F^{2n}_{n+1} \, d (\nu_G)_\gamma = (1 + O(L_n^{1 - 2 \eta})) \int_\gamma \psi \circ F^{2n}_{n+1} \, d \Leb_\gamma \, ,
\]
and so applying Proposition \ref{prop:CLTdocCurves} with $m \mapsto n+1, n \mapsto 2n$, we have
\begin{gather*}
\int_{\gamma} \psi \circ F^{2n}_{n+1} \, d (\nu_G)_\gamma\\
=
(1 + O(L_n^{1 - 2 \eta})) \Len(\gamma) \cdot \int \psi + (1 + O(L_n^{1 - 2 \eta})) \| \psi \|_\a \cdot O \bigg( L_{2n}^{-\a(1-\eta)/(2 + \a)} + L_{n+1}^{1 - 2 \eta} + \sum_{k = n+1}^{2n-1} L_k^{-1 + \eta} \bigg) \\
= \int \psi + \| \psi \|_\a \cdot O \bigg( L_{2n}^{-\a(1-\eta)/(2 + \a)} + L_{n}^{1 - 2 \eta} + \sum_{k = n+1}^{2n-1} L_k^{-1 + \eta} \bigg) \, .
\end{gather*}

Collecting these estimates, we conclude
\[
\bigg| \int \psi \circ F^{2n} \, d \nu - \int \psi \bigg| \leq C \| \psi \|_\a  \bigg( L_n^{- \min\{ 2 \eta -1, \a (1 - \eta) /(2 + \a) \}} +   \ell^{-2} \sum_{i = \lfloor n/4 \rfloor}^\infty L_i^{- \frac12(1 - \eta)} \bigg) \, .
\]
This completes the proof.


\section*{Appendix}

\begin{lem}[Partition saturation]\label{lem:partitionSaturation}
Let $X$ be a compact metric space, $\Bor(X)$ the Borel $\sigma$-algebra on $X$, and let $\mu$ be a probability on $(X, \Bor(X))$. Let $\xi$ be a measurable partition of $X$, and denote by $(\mu_C)_{C \in \xi}$ the canonical disintegration of $\mu$ with respect to $\xi$. Let $\mu^T$ denote the transverse measure on $X / \eta$.

Let $Y \in \Bor(X)$. Then, $\mu^T \{ C \in X/\eta : \mu_C(Y) > 0 \} \geq \mu(Y)$.
\end{lem}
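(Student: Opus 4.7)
The plan is to use directly the defining property of the canonical disintegration, namely that for any Borel $Y \subset X$,
\[
\mu(Y) = \int_{X/\xi} \mu_C(Y) \, d\mu^T(C) \, .
\]
The key observation is that each $\mu_C$ is a probability measure, so $\mu_C(Y) \in [0,1]$ for every $C$.

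First I would split the integral according to whether $\mu_C(Y) > 0$ or $\mu_C(Y) = 0$:
\[
\mu(Y) = \int_{\{C : \mu_C(Y) > 0\}} \mu_C(Y) \, d\mu^T(C) + \int_{\{C : \mu_C(Y) = 0\}} \mu_C(Y) \, d\mu^T(C) = \int_{\{C : \mu_C(Y) > 0\}} \mu_C(Y) \, d\mu^T(C).
\]
Then I would simply bound $\mu_C(Y) \leq 1$ on the remaining set of integration to conclude
\[
\mu(Y) \leq \int_{\{C : \mu_C(Y) > 0\}} 1 \, d\mu^T(C) = \mu^T\{ C \in X/\xi : \mu_C(Y) > 0\},
\]
as desired.

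There is essentially no obstacle here: the statement is a one-line consequence of the disintegration formula plus the fact that conditional measures are probability measures. The only bookkeeping needed is measurability of the set $\{C : \mu_C(Y) > 0\}$ in $X/\xi$, which follows from the measurability of the map $C \mapsto \mu_C(Y)$ guaranteed by the definition of a canonical disintegration.
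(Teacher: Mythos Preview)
Your proof is correct and matches the paper's own argument essentially line for line: both use the disintegration formula, restrict to the set $\{C : \mu_C(Y) > 0\}$, and bound $\mu_C(Y)\leq 1$ there. Your version is just slightly more explicit in splitting off and discarding the zero contribution.
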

\begin{proof}
We estimate
\begin{align*}
\mu(Y) & = \int_{X / \eta} \mu_C(Y) \, d \mu^T(C) 
= \int_{ C \in X / \eta : \mu_C(Y) > 0} \mu_C(Y) \, d \mu^T(C) \\
& \leq \mu^T \{ C \in X / \eta : \mu_C(Y) > 0 \} \, . \quad  \qedhere
\end{align*}
\end{proof}

\bibliographystyle{plain}
\bibliography{biblio_nonauto}

\end{document}